\numberwithin{equation}{section}
\theoremstyle{plain}
\newtheorem{theorem}{Theorem}
\newtheorem{lemma}{Lemma}
\newtheorem{proposition}{Proposition}
\theoremstyle{definition}
\newtheorem{remark}{Remark}[section]
\newcommand{\N}{\mathcal{N}}
\newcommand{\C}{\mathbb{C}}
\newcommand{\R}{\mathbb{R}}
\newcommand{\z}{\boldsymbol{z}}
\newcommand{\round}[1]{\left({#1}\right)}
\newcommand{\squared}[1]{\left[{#1}\right]}
\newcommand{\braces}[1]{\left\lbrace{#1}\right\rbrace}
\newcommand{\abs}[1]{\left\lvert{#1}\right\rvert}
\newcommand{\norm}[1]{\left\lVert{#1}\right\rVert}
\newcommand{\bracket}[1]{\left\langle{#1}\right\rangle}
\newcommand{\sdf}[1]{\mathsf{#1}}
\newcommand{\twovec}[2]{\begin{pmatrix}{#1}\\{#2}\end{pmatrix}}
\newcommand{\Pssi}[1]{\boldsymbol{\Psi}}
\newcommand{\ring}[1]{\mathring{#1}}
\newcommand{\Riem}{\textrm{Riem}}
\newcommand{\A}{\mathcal{A}}
\newcommand{\eps}{\varepsilon}
\newcommand{\J}{\mathcal{J}}
\newcommand{\Tr}{\mathrm{Tr}}
\newcommand{\vphi}{\varPhi}
\newcommand{\V}{\mathcal{V}}
\definecolor{ao}{rgb}{0.0, 0.5, 0.0}
\begin{document}

\title[Entire solutions to 4 dimensional Ginzburg-Landau equations]{Entire solutions to 4 dimensional Ginzburg--Landau equations and codimension 2  minimal submanifolds}
    \author[M. Badran]{Marco Badran}
    
	\address{Department of Mathematical Sciences, University of Bath, Bath, BA2 7AY, United Kingdom. }
 
    \email{mb2747@bath.ac.uk}	

	\author[M. del Pino]{Manuel del Pino}
	
	\address{Department of Mathematical Sciences, University of Bath, Bath, BA2 7AY, United Kingdom.}
	
	\email{mdp59@bath.ac.uk}
\maketitle
\begin{abstract}
We consider the magnetic Ginzburg--Landau equations in $\R^4$
\begin{equation*}
\begin{cases}
    -\eps^2(\nabla-iA)^2u =
  \frac{1}{2}(1-|u|^{2})u,\\
    \eps^2d^{*}dA=\bracket{(\nabla-iA)u,iu},
    \end{cases}
\end{equation*}
formally corresponding to the Euler--Lagrange equations for the energy functional
\begin{equation*}
     E(u,A)=\frac{1}{2}\int_{\R^4}|(\nabla-iA)u|^{2}+\eps^2|dA|^{2}+\frac{1}{4\eps^2}(1-|u|^{2})^{2}.
\end{equation*}
Here $u\colon\R^4\to \C$, $A\colon\R^4\to\R^4$ and $d$ denotes the exterior derivative acting on the one-form dual to $A$.
Given a minimal surface $M^2$ in $\R^3$ with finite total curvature and non-degenerate, we construct a solution  $(u_\eps,A_\eps)$
which has a
zero set consisting of a smooth surface close to 
$M\times \{0\}\subset \R^4$. Away from the latter surface we have
$|u_\eps| \to 1$ and  
$$\begin{aligned} 
u_\eps(x)\, \to\, \frac {z}{|z|},\quad
A_\eps(x)\, \to\, \frac 1{|z|^2} ( -z_2 \nu(y) + z_1 {\bf e}_4), \quad x = y + z_1 \nu(y) + z_2 {\bf e}_4
\end{aligned} 
$$
for all sufficiently small $z\ne 0$. Here $y\in M$ and $\nu(y)$ is a unit normal vector field to $M$ in $\R^3$.
\end{abstract}


\section{Introduction}\label{Introduction}

We consider the magnetic Ginzburg--Landau energy in $\R^n$
	\begin{equation}\label{Energy_no_domain}
        E(u,A)=\frac{1}{2}\int_{\R^n}|(\nabla-iA)u|^{2}+\eps^2|dA|^{2}+\frac{1}{4\eps^2}(1-|u|^{2})^{2}
	\end{equation}
whose argument is a pair $(u,A)$ where $u\colon\R^n\to \C$ is an order parameter and $A\colon\R^n\to \R^n$ is a vector field 
which we also regard as a one form $A(x)=A_j(x)dx^j$ in $\R^n$,
so that its exterior derivative is
\begin{equation*}
    dA = \sum_{j<k}(\partial_jA_k-\partial_kA_j)dx^j\wedge dx^k.
\end{equation*}
and $|dA|^2$ in \eqref{Energy_no_domain} is given by $ |dA|^2=\frac12\sum_{j,k=1}^n\abs{\partial_kA_j-\partial_jA_k}^2$.

\medskip

Energy \eqref{Energy_no_domain} arises in the 
classical Ginzburg--Landau theory of superconductivity; the quantity $|u|^2$ measures density of Cooper pairs of superconducting electrons and $A$ is the induced magnetic potential. The zero set of $u$ is interpreted as that of defects of the underlying material where superconductivity is lost. The functional $E$ arises from a $U(1)$-gauge theory, meaning that it is invariant under $U(1)$ gauge transformations
\begin{equation*}
	(u,A)\mapsto (ue^{i\gamma},A+\nabla\gamma)
\end{equation*}
for any $\gamma\colon\R^n\to\R$.

\medskip

The associated Euler--Lagrange equations for $E$ are given by
\begin{equation}\label{full_equations}
\begin{cases}
    -\eps^2(\nabla-iA)^2u =
  \frac{1}{2}(1-|u|^{2})u,\\
   \ \ \, \eps^2d^{*}dA=\bracket{(\nabla-iA)u,iu}
\end{cases}
\quad\text{in }\R^n.
\end{equation}
In the above expression the brackets $\langle\cdot,\cdot\rangle$ represent the standard inner product of complex numbers. The operator in the left-hand side of the second equation reads, in Euclidean coordinates,
\begin{align*}
	d^*dA&=-\sum_{j,k=1}^n\partial_j(\partial_jA_k-\partial_k A_j)dx^k=\sum_{k=1}^n(-\Delta A_k+\partial_k\mathrm{div}A)dx^k.
\end{align*}
The term $\bracket{(\nabla-iA)u,iu}$ is dual to a gauge-invariant real-valued 1-form called superconducting current. 

\medskip

Taubes \cite{taubes1980arbitraryn} and Berger--Chen \cite{berger1989symmetric} 
have found solutions of \eqref{full_equations} in the planar case $n=2$ with isolated zeros of $u(z)$ (vortices). 
When $\eps=1$ the degree $1$ vortex solution $(u_0, A_0)$ found in \cite{berger1989symmetric} takes the form 
\begin{equation}\label{radialansatz}
    u_0(z)=  f(r)e^{i\theta},\quad A_0(z) =a(r)\nabla\theta, \quad z=re^{i\theta}
\end{equation}
where $r=|z|$, \ $\nabla\theta= |z|^{-2}(-z_2,z_1)$.  
The functions $f(r)$ and $a(r)$ are positive solutions to the system of ODE 
\begin{equation}\label{system_fa}
\begin{cases}
		-f''-\frac{f'}{r}+\frac{(1-a)^2f}{r^2}-\frac{1}{2}f(1-f^2)=0,\\
-a''+\frac{a'}{r}-f^2(1-a)=0
\end{cases}
\quad\text{in }(0,+\infty)
\end{equation}
with $f(0)=a(0)=0$. Remark that by self-duality the pair $(f,a)$ satisfies an equivalent (see \cite{taubes1980equivalence}) system of first order equations
\begin{equation}\label{first order}
	\begin{cases}
		f'=\frac{(1-a)f}r\\
		a'=\frac r2(1-f^2)
	\end{cases}
\end{equation} 
see for example \cite{gustafsonrid2000stability}.
The pair $(u_0,A_0)$ is the unique (up to gauge transformations) solution of \eqref{full_equations} with $\eps=1$ and exactly one zero with topological degree 1 at the origin. This solution is linearly stable as established in \cite{stuart1994dynamics,gustafsonrid2000stability}.  Also, $f$ and $a$ are strictly monotone increasing and
\begin{equation*}
	f(r)-1=O(e^{-{r}}),\quad a(r)-1= O(e^{-r}),\quad\text{as }r\to \infty,
\end{equation*}
see \cite{berger1989symmetric,plohr1981behavior}. 
We observe that, for any $\eps>0$, the scaling $(f(r/\eps)e^{i\theta}, a(r/\eps)d\theta)$ solves \eqref{full_equations} for $n=2$.

\medskip

In what follows we restrict ourselves to the four dimensional case $n=4$ in \eqref{full_equations}. Denote
\begin{equation*}
	\nabla^A\coloneqq\nabla-iA,\quad \Delta^A\coloneqq (\nabla-iA)^2.
\end{equation*}
We find entire solutions $(u_\eps,A_\eps)$ to the system
 \begin{equation}\label{Energy_4d}
        \begin{cases}
    -\eps^2\Delta^Au =
  \frac{1}{2}(1-|u|^{2})u,\\
   \eps^2d^{*}dA=\bracket{\nabla^Au,iu}
\end{cases}
\quad\text{in }\R^4
	\end{equation}
that exhibit a zero set $\eps$-close to a prescribed codimension 2 smooth minimal surface $\widetilde M$ embedded in $\R^4$. More precisely, writing $A_0=(A_{01},A_{02})$, then
for all sufficiently small $|z|$ and $\eps \to 0$ these solutions satisfy 
\begin{equation}\label{whatever}
    u_\eps(x) = u_0(z/\eps) +o(1), \quad A_\eps(x) = 
    \eps^{-1}A_{01}(z/\eps)\nu_1(y)  + 
     \eps^{-1}A_{02}(z/\eps)\nu_2(y)+ 
    o(1),
\end{equation}
where $x= y + z_1 \nu_1(y) +  z_2 \nu_2(y)$, being $y\in \widetilde M$ and $\{\nu_1$,$\nu_2\}$ an orthonormal frame of the normal space to $\widetilde M$ at the point $y$. In particular 
we obtain $|u_\eps|\to 1$ away from $\widetilde M$ and
\begin{equation*}
	    u_\eps(x)\, \to\, \frac {z}{|z|},\quad
A_\eps(x)\, \to\, \frac 1{|z|^2} (-z_2 \nu_1(y) + z_1 \nu_2(y)).
\end{equation*}

This type of connection between solution of semilinear PDEs and minimal submanifolds is well understood in the Allen--Cahn case
\begin{equation} \label{ac}
-\eps \Delta u =\frac{1}{\eps} (1-u^2)u
\end{equation}
for real-valued functions $u$. Solutions that concentrate along minimal hypersurfaces have been broadly studied after the pioneering work by Modica--Mortola \cite{mortola5esempio} and De Giorgi's conjecture \cite{de1979convergence}.
In general one expects existence of solutions $u_\eps(x)$ whose zero set lies close to a given minimal hypersurface $M$ and 
$u_\eps (x) \approx \tanh (z/\sqrt{2}\eps)$, where $z$ is a normal coordinate to $M$. This is the principle behind various constructions including the works by Pacard and   Ritor\'e \cite{pacard2003constant} in compact manifolds and by the second author, M.\ Kowalczyk and J.\ Wei \cite{del2011giorgi, del2013entire} for entire solutions in $\R^n$. 

\medskip 
In \cite{del2013entire} such a construction has been achieved for $n=3$ in  \eqref{ac} and    
each given embedded minimal surface $M$ in $\R^3$ with finite total curvature and non-degenerate in suitable sense, the simplest example being the catenoid. 
The purpose of this paper is to construct entire solutions to \eqref{Energy_4d} with the asymptotic behaviour \eqref{whatever} around the class of codimension 2 minimal submanifolds in $\R^4$ corresponding to the embedding $M\times \{0\} \subset  \R^4$ of the class of minimal surfaces $M$ in $\R^3$ considered in  \cite{del2013entire}.   

\smallskip
Thus,  we look for entire solutions $(u_\eps, A_\eps)$ of system \eqref{Energy_4d} such that the zero set of $u_\eps$  is a codimension 2 manifold 
close to $\widetilde M= M\times \{0\}$ and $|u_\eps|\to 1$ as $\eps \to 0$ in compact subsets of $\R^4\setminus \widetilde M$.
Next we recall some major features of this celebrated class of minimal surfaces.

\subsection*{Complete minimal surfaces of finite total curvature in $\R^3$}

A surface $M$ embedded in $\R^3$ is said to be a minimal surface if it is a critical point of the area functional, or equivalently if its mean curvature
$ k_1 + k_2 \equiv 0$ vanishes on $M$, where $k_1, k_2$ are the principal curvatures. Denoting with $K= k_1k_2 \le 0 $ the Gauss curvature, we say that $M$ has finite total curvature if 
\begin{equation*}
	\int_M|K|<\infty.
\end{equation*}
The theory of embedded, complete minimal surfaces $M$ with finite total curvature in $\R^3$, has reached a notable development in the last four decades. For more than a century, the plane and the catenoid were the only two known examples of such surfaces. While a general theory for those surfaces was available for a long time, only in 1981 Costa discovered a first non-trivial example for $M$ embedded, orientable, with genus 1, later generalised by Hoffman and Meeks to arbitrary genus, see \cite{costa1984example, hoffman1990embedded}. These surfaces have three ends, two catenoidal and one planar.
Many other examples of multiple-end embedded minimal surfaces
have been found since; see for instance \cite{kapouleas1997complete, traizet2002embedded} and references therein.
Outside a large cylinder, a general manifold  $M$ in this class decomposes into the disjoint union of $m$ unbounded  connected components $M_1,\dots,M_m$, called its ends, which are asymptotic to 
either catenoids or planes, all of them with parallel axes, see 
\cite{osserman2013survey, jorge1983topology, schoen1983uniqueness}. 
After a rotation, we can choose coordinates $x=(x_1,x_2,x_3)=(x',x_3)$ in $\R^3$ and a large number $R_0$ such that
\begin{equation*}
	M\setminus\{|x'| < R_0\}=\bigcup_{j=1}^m M_j.
\end{equation*}
Each end $M_k$ can asymptotically be represented as 
\begin{equation*}
	M_k=\braces{x\in\R^3\,:\, |x'| \geq R_0,\ x_3=F_k(x')}
\end{equation*}
where for suitable constants $a_k,b_k,b_{ik}$, the function $F_k$ satisfies 
\begin{equation}\label{expansion F_k}
	F_k(x')=a_k\log |x'| +b_k+b_{ik}\frac{x^i}{|x'|^2}+O(|x'|^{-2})\quad |x'| \geq R_0.
\end{equation}
The coefficients $a_k$ are ordered and balanced, in the sense that 
\begin{equation}\label{a_ks}
	a_1\leq a_2\leq \cdots\leq a_m,\quad a_1+\cdots +a_m=0.
\end{equation}
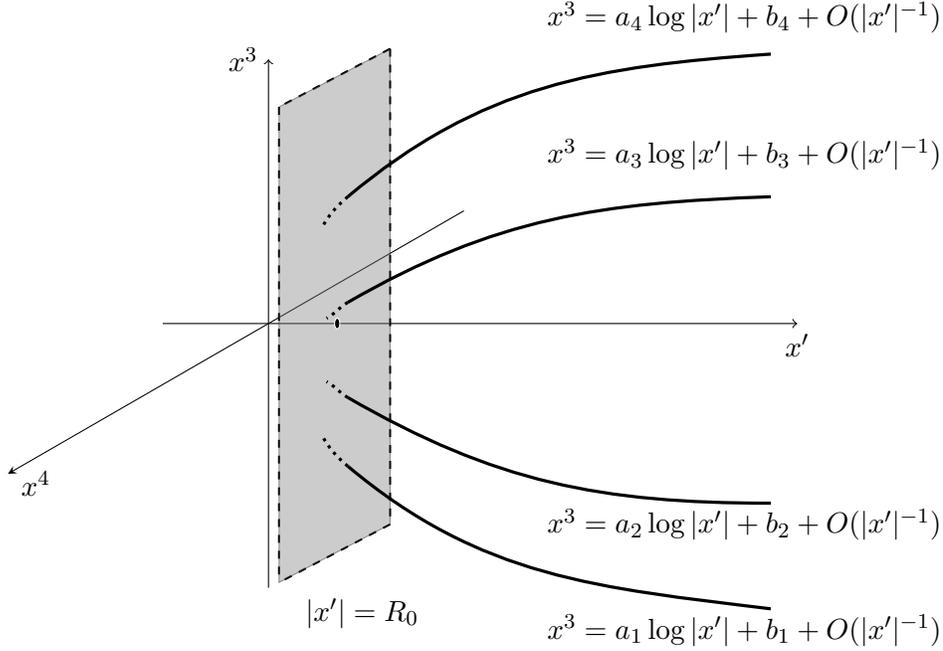
\begin{figure}
    \centering
         \begin{tikzpicture}[x=20pt,y=20pt,yscale=1,xscale=1]
    \draw[black, thin, ->] (0,-5) -- (0,5);
    \draw[black, thin, ->] (-2,0) -- (10,0);
    \draw (0,0) -- (30:3cm);
    \draw[-stealth] (0,0) -- (210:4cm);
    \draw[thick ,dashed] (2.3,5.2) -- (2.3,-3.8);
    \draw[thick ,dashed] (.2,4.1) -- (.2,-4.9);
    \draw[thick ,dashed] (2.3,5.2) -- (.2,4.1);
    \draw[thick ,dashed] (2.3,-3.8) -- (.2,-4.9);
    \draw[fill=gray, opacity=0.4] (2.3,5.2) -- (2.3,-3.8) -- (.2,-4.9) -- (.2,4.1) -- cycle;
    
    \shadedraw[inner color=black,outer color=black, draw=white] (1.3,0) ellipse (.05 and .1);
    
    \draw[black, very thick, out=30,in=182] (1.5,0.4) to (9.5,2.4);
    \draw[black, very thick, dotted, out=210, in = 28] (1.5,0.4) to (1.1,0.1);
    
    \draw[black, very thick, out=40,in=184] (1.5,2.4)to (9.5,5.1);
    \draw[black, very thick, dotted, out=220, in = 60] (1.5,2.4) to (1,1.8);
    
    \draw[black, very thick, out=-30,in=-180] (1.5,-1.4) to (9.5,-3.4);
    \draw[black, very thick, dotted, out=-210, in = -28] (1.5,-1.4) to (1.1,-1.1);
    
    \draw[black, very thick, out=-40,in=-187] (1.5,-2.7) to (9.5,-5.4);
    \draw[black, very thick, dotted, out=-220, in = -60] (1.5,-2.7) to (1,-2.1);
    
    \draw[black, thick] (0,5) circle (0pt) node[anchor=east] {$x^3$};
    \draw[black, thick] (-4.4,-2.6) circle (0pt) node[anchor=north] {$x^4$};
    \draw[black, thick] (10,0) circle (0pt) node[anchor=north] {$x'$};
    \draw[black, thick] (.5,-5) circle (0pt) node[anchor=north west]  {$|x'|=R_0$};
    \draw[black, thick] (9,5.3) circle (0pt) node[anchor=south] {$x^3=a_4\log |x'|+b_4+O(|x'|^{-1})$};
    \draw[black, thick] (9,2.7) circle (0pt) node[anchor=south] {$x^3=a_3\log |x'|+b_3+O(|x'|^{-1})$};
    \draw[black, thick] (9,-4.3) circle (0pt) node[anchor=south] {$x^3=a_2\log |x'|+b_2+O(|x'|^{-1})$};
    \draw[black, thick] (9,-6.3) circle (0pt) node[anchor=south] {$x^3=a_1\log |x'|+b_1+O(|x'|^{-1})$};
       \end{tikzpicture}
       \caption{An outline of the asymptotic behaviour of the ends of the surface $M$ in the case where $m=4$.}
\end{figure}

The second variation of the area functional corresponds to the Jacobi operator of $M$
\begin{equation*}
	J_M[h]=\Delta_M h+|\A_M|^2h,
\end{equation*} 
where $|\A_M|^2=-K$ is the square norm of the second fundamental form. For later purposes, we record the curvature decay along the ends 
\begin{equation}\label{estimate A_M}
	|\A_M|^2=k_1^2+k_2^2=O(r^{-4}),\quad r=|x'|
\end{equation}
for $r$ large. This is a consequence of the expansion \eqref{expansion F_k}, see \cite{del2013entire}.

\medskip

Translations along the coordinate axes and rotation  around the $x_3$-axis induce four bounded Jacobi fields $z_i$, $i=0,\dots,3$, that is $J_M[z_i]=0$ 
\begin{equation}\label{jacobi fields}
	\begin{split}
	z_0(y)&=\nu(y)\cdot\round{-y_2,y_1,0},\\
	z_i(y)&=\nu(y)\cdot e_i\quad i=1,2,3.
	\end{split}
\end{equation}
We say that $M$ is non-degenerate if the  second variation of the area functional has no bounded kernel other than that induced by rigid motions, that is
\begin{equation}\label{nondeg}
	\braces{z\in L^\infty(M)\,:\, \Delta_M z+|\A_M|^2z=0}=\mathrm{span}\braces{z_0,z_1,z_2,z_3}.
\end{equation}
The assumption of non-degeneracy is known to hold in some notable cases of embedded, minimal surfaces in $\R^3$, like the catenoid or the Costa--Hoffman--Meeks surface of any genus, see \cite{nayatani1992morse, nayatani1993morse, morabito2009index}. 
\subsection*{Main results}
In what follows, $M$ designates a complete, minimal surface with finite total curvature embedded in $\R^3$, which is also non-degenerate in the sense  \eqref{nondeg}. For the moment we assume that the order in the ends  \eqref{a_ks} is strict, namely
\begin{equation}\label{strict a}
	a_1< a_2< \cdots< a_m.
\end{equation}
Our main result states the existence of a solution of Problem \eqref{Energy_4d} 
with a profile as in \eqref{whatever} with $\widetilde M= M\times \{0\}$ and $(\nu_1,\nu_2)=(\nu,{\bf e}_4)$, where $\nu$ is a unit normal vector field on $M$. We will identify $\widetilde M=M\times\{0\}$ with  $M$.

\begin{theorem}\label{Less general theorem}
Let $M$ satisfy assumption  \eqref{strict a}.
	Then there is a number $\delta>0$ such that for all  sufficiently small $\eps>0$, there exists a solution $(u_\eps,A_\eps)$ of problem \eqref{Energy_4d} which as $\eps\to 0 $
	satisfies 
	\begin{equation} \label{aaa}   
		u_\eps(x)=u_0\round{\tfrac{z-\eps h_1(y)}{\eps}}+O(\eps^2),\quad A_\eps(x)=  \eps^{-1}A_{01}\round{\tfrac{z-\eps h_1(y)}{\eps}}  \nu(y) +  \eps^{-1}A_{02}\round{\tfrac{z-\eps h_1(y)}{\eps}}   {\bf e}_4 +O(\eps)
	\end{equation}
	for all points $$  x=y+z_1\nu(y)+z_2{\bf e}_4, \quad   y\in M, \quad 
 \abs{z}<\delta,  $$
	and a smooth function $h_1: M\to \R^2$ that satisfies
	\begin{equation}
		\norm{h_1}_{L^\infty(M)} \leq C\eps.
	\end{equation}
Besides $|u_\eps|\to1$ uniformly in compact subsets of $\R^4\setminus M$.
\end{theorem}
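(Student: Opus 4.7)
The natural approach is an infinite-dimensional Lyapunov--Schmidt reduction combining the gluing technique of \cite{del2013entire} for Allen--Cahn on minimal surfaces with the planar Taubes vortex as a transverse profile. First, I introduce Fermi coordinates $(y,z_1,z_2)\in M\times\R^2$ via $x=y+z_1\nu(y)+z_2{\bf e}_4$, valid in a tubular neighborhood of width $\delta$. The initial ansatz places a rescaled planar vortex transversely to $M$ at each point $y\in M$, with core displaced by a small parameter function $q_\eps:M\to\R^2$,
\begin{equation*}
u_*(x)=u_0\round{\tfrac{z-q_\eps(y)}{\eps}},\qquad A_*(x)=\eps^{-1}A_{01}\round{\tfrac{z-q_\eps(y)}{\eps}}\nu(y)+\eps^{-1}A_{02}\round{\tfrac{z-q_\eps(y)}{\eps}}{\bf e}_4,
\end{equation*}
and outside this tube one continues by the far-field $z/|z|$ with a smooth cut-off, matching the logarithmic asymptotic tails of the vortex on the different ends of $M$. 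Since \eqref{Energy_4d} is gauge invariant, I would impose the Coulomb gauge $d^*A=0$ to kill the gauge direction in the kernel.

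The next step is a careful expansion of $\mathcal{F}_\eps(u_*,A_*)$ using the Fermi-coordinate expression of the Laplacian and of $d^*d$ in terms of the second fundamental form of $M$, so as to identify the error to leading order in $\eps$. One then studies the linearization $\mathcal{L}_\eps$ at $(u_*,A_*)$: fiberwise and at leading order it is the linearization of the planar vortex system, whose kernel modulo gauge is spanned by the two translation modes $\partial_{z_1}(u_0,A_0)$ and $\partial_{z_2}(u_0,A_0)$. The core linear step is to solve $\mathcal{L}_\eps(\varphi,\alpha)=f$ modulo these two modes in weighted H\"older spaces encoding the exponential decay of the vortex profile away from its core and the catenoidal/logarithmic growth along the ends of $M$. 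A priori estimates for the projected linear problem should follow from a blow-up-and-contradiction argument, reducing to non-degeneracy of the planar vortex (cf.\ \cite{stuart1994dynamics,gustafsonrid2000stability}) and the linear theory on $M$ for the Jacobi operator.

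With the linear theory in hand, the projected nonlinear problem is solved by a contraction mapping in a ball of size $O(\eps^2)$, producing a correction $(\varphi_\eps[q_\eps],\alpha_\eps[q_\eps])$ depending smoothly on $q_\eps$. The remaining reduced equations, obtained by requiring that the projections on the two translation modes vanish, form a coupled nonlinear system on $M$ whose linearization decouples to leading order as
\begin{equation*}
\Delta_M q_{\eps,1}+|\A_M|^2q_{\eps,1}=\eps\,h_1[q_\eps],\qquad \Delta_M q_{\eps,2}=\eps\,h_2[q_\eps].
\end{equation*}
The first is the Jacobi equation on $M$, invertible modulo the bounded kernel $\mathrm{span}\braces{z_0,z_1,z_2,z_3}$ by the non-degeneracy hypothesis \eqref{nondeg}, while the second is a Laplace--Beltrami equation whose invertibility in suitable weighted norms follows from Fredholm theory on asymptotically catenoidal ends. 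Matching the finite-dimensional bounded kernels against components of the right-hand sides and running a second contraction then yields $q_\eps$ with $\norm{q_\eps}_{L^\infty(M)}\le C\eps$.

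The main obstacle is this last reduced step: developing sharp linear theory for the Jacobi and Laplace operators on the non-compact surface $M$ with weights adapted to the indicial roots on each end, uniformly up to infinity. The strict ordering \eqref{strict a} enters precisely here, separating the logarithmic growth rates of distinct ends, ruling out resonances and yielding the Fredholm character of the operator in the chosen spaces. Once both layers of contraction close, the asymptotics \eqref{aaa} follow by unwinding the ansatz, and the conclusion $|u_\eps|\to 1$ on compacts of $\R^4\setminus M$ is a direct consequence of $|u_0(\z)|\to 1$ as $|\z|\to\infty$ together with the smallness of the correction.
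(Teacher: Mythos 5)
Your proposal follows the same overall strategy as the paper: Fermi coordinates, a transverse planar vortex ansatz with displacement $q_\eps$ glued to a far-field configuration, a projected linear theory around the fiberwise vortex linearization, and a reduced system on $M$ whose principal part is exactly the decoupled operator \eqref{jacobi in codim 2}. However, two steps are stated in a way that hides genuine difficulties, and as written they constitute gaps.

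First, the gauge degeneracy. You propose to ``impose the Coulomb gauge $d^*A=0$ to kill the gauge direction in the kernel.'' The correct slice transverse to the gauge orbit at a configuration $W=(u,A)$ is not $\{d^*\omega=0\}$ but the set of perturbations $\Phi=(\phi,\omega)$ with $d^*\omega+\bracket{\phi,iu}=0$, i.e.\ the $L^2$-orthogonal complement of the gauge zero-modes $\Theta_W[\gamma]=(iu\gamma,d\gamma)^T$; only on this slice does the linearization reduce to the elliptic operator $L_W$ to which Stuart's coercivity estimate \eqref{coercivity} (on the complement of the translation modes) applies. More importantly, even after restricting to this slice the fixed point only solves the projection of $S(W+\vphi)=0$ onto $Z_{W,g}^\perp$; one must still show that the residual gauge component vanishes. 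The paper does this by writing $S(W+\vphi)=\Theta_W[\gamma]$, pairing with $\Theta_{W+\vphi}[\gamma]$, and using positivity of $-\Delta+|w|^2+\bracket{w,\varphi}$ for $\vphi$ small to conclude $\gamma=0$. Without some such stability argument your construction produces a solution of a gauge-projected equation, not of \eqref{Energy_4d}.

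Second, the cokernel of the reduced problem. By non-degeneracy the bounded kernel of the Jacobi component is spanned by $z_0,\dots,z_3$, and the $\Delta_M$ component has the constants in its kernel, so the reduced system is solvable only after inserting obstruction terms: one solves $\J[q]=G(q)-\sum_{j}c^j|\A_M|^2\hat{\z}_j$ and must then prove $c^j=0$. Your phrase ``matching the finite-dimensional bounded kernels against components of the right-hand sides'' does not identify the mechanism by which this happens. In the paper it is Lemma \ref{orthogonality Z_i}: the identities $\int_{\R^4}S(U)Z_i=0$, obtained from a Pohozaev-type computation on large cylinders using translation and rotation invariance of the energy together with the balancing condition $\sum_k a_k=0$ on the logarithmic growth rates of the ends. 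Without this step the argument only yields a solution of the corrected (projected) Jacobi system, and the theorem does not follow.
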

From \eqref{aaa} the asymptotic behavior of the predicted solution near $M$ for $z\ne 0$ is given by    
\begin{equation*}
	    u_\eps(x)\, \to\, \frac {z}{|z|},\quad
A_\eps(x)\, \to\, \frac 1{|z|^2} (-z_2 \nu(y) + z_1 {\bf e}_4). 
\end{equation*}
The proof yields that corresponding bounds in the statement of the theorem can also be obtained for derivatives of any order. In fact, the contraction mapping principle, the fact that $D_zu_0(0)$ is invertible and the  implicit function theorem yields that the set of values $x$ with  $u_\eps(x)=0$ can be described as a smooth surface of the form
$$
x=   y + h_1^1(y) \nu(y) +  h_1^2(y){\bf e_4}  + O(\eps^2),  \quad y\in M.
$$

It is possible to generalise Theorem \ref{Less general theorem} by removing the hypothesis of non-parallel ends \eqref{strict a}. 
In this case, we choose a balanced, ordered vector of real numbers
\begin{equation*}
	\lambda= (\lambda_1,\dots,\lambda_m),\quad \lambda_1\leq\lambda_2\leq \cdots\leq \lambda_m
\end{equation*}
with 
\begin{equation}\label{balendcond}
	\sum_{i=1}^m\lambda_i=0.
\end{equation}
We will prove the existence of a solution $(u_{\eps},A_\eps)$ such that $u_\eps^{-1}(0)$ lies uniformly close to $M$, for $r(x)=O(1)$, and its $k$-th end, $k=1,\dots,m$ lies at uniformly bounded distance from the graph 
\begin{equation}\label{graph ends with beta}
	 (x_3, x_4)=\left(F_k(x')+\eps\lambda_k\log|x'|,\  0\right),\quad |x'|\geq R_0.
\end{equation}
As done in \cite{del2013entire}, we need to make a further geometric requirement on the $\lambda_k$'s. If two ends are parallel, say $a_k=a_{k+1}$, then we will need that $\lambda_{k+1}>\lambda_k$ for otherwise the graphs \eqref{graph ends with beta} will eventually intersect. For some $\sigma\in (0,1)$, we require
\begin{equation}\label{divendcond}
	\lambda_{k+1}-\lambda_k>4/\sigma\quad\text{if}\quad a_{k+1}=a_k
\end{equation}
With these assumptions we can state the general result. 
\begin{theorem}\label{main theorem}
    Let $M$ be as above. There is a number $\delta>0$ such that for all  sufficiently small $\eps>0$ and $\lambda=(\lambda_1,\dots,\lambda_m)$ satisfying conditions \eqref{balendcond} and \eqref{divendcond} there exists a solution $(u_\eps,A_\eps)$ of problem \eqref{Energy_4d} which as $\eps\to 0 $
	satisfies 
	\begin{equation*}   
		u_\eps(x)=u_0\round{\tfrac{z-\eps h(y)}{\eps}}+O(\eps^2),\quad A_\eps(x)=  \eps^{-1}A_{01}\round{\tfrac{z-\eps h(y)}{\eps}}  \nu(y) +  \eps^{-1}A_{02}\round{\tfrac{z-\eps h(y)}{\eps}}   {\bf e}_4 +O(\eps)
	\end{equation*}
	for all points $$  x=y+z_1\nu(y)+z_2{\bf e}_4, \quad   y\in M, \quad 
 \abs{z}<\delta,  $$
	and where the smooth function $h: M\to \R^2$ satisfies
    \begin{equation*}
		h(y)=\round{(-1)^k\lambda_k\log |y'|,\, 0}+h_1(y)\quad y=(y',y^3,0)\in M_k\times\{0\}.
	\end{equation*}
 with 
 \begin{equation}
		\norm{h_1}_{L^\infty(M)} \leq C\eps.
	\end{equation}
Besides, $|u_\eps|\to1$ uniformly in compact subsets of $\R^4\setminus M$.
\end{theorem}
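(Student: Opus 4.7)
The plan is to carry out an infinite-dimensional Lyapunov--Schmidt reduction in the spirit of the Allen--Cahn construction of \cite{del2013entire}, adapted to the vortex profile $(u_0,A_0)$ of \eqref{radialansatz} and to the magnetic gauge invariance. First I would set up Fermi coordinates $(y,z)\in M\times \R^2$ near the candidate zero set, with $y\in M$, and parametrise the perturbed submanifold as a graph $z=q_\eps(y)\in\R^2$ over $M$. Here $q_\eps$ is the unknown ``modulation parameter''; far out on the $k$-th end we prescribe the logarithmic shift $q_\eps(y)=((-1)^k\lambda_k\log|y'|+O(\eps),0)$ dictated by \eqref{graph ends with beta}. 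Using a cut-off gluing, I would assemble a global approximation $(\U_\eps,\V_\eps)$ that in the tubular neighbourhood $|z-q_\eps(y)|\lesssim 1$ equals the $\eps$-rescaled vortex $u_0((z-q_\eps)/\eps)$ together with $\eps^{-1}A_{0j}((z-q_\eps)/\eps)$ paired with the orthonormal normal frame $\nu(y),\mathbf{e}_4$, and outside this neighbourhood smoothly interpolates to $(1,0)$.

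Second, I would expand the error $\sdf{E}(\U_\eps,\V_\eps)$ using that $M\times\{0\}$ is minimal and that the translated vortex solves \eqref{full_equations} in the fibre. The leading error comes from: (i) curvature of $M$ and deviation of the parametrization from minimality, producing on each fibre a term controlled by the Jacobi operator $J_M[q_\eps]+$ (lower order) acting on $q_\eps$; (ii) Laplacian in $y$ of the approximate profile; and (iii) interactions between normal directions in $\R^4$. Writing the perturbation of the pair as $(u_\eps,A_\eps)=(\U_\eps+\phi,\V_\eps+\alpha)$, the linearised operator in the fibres is the standard Jacobi operator of the planar vortex, whose kernel consists of the two translation modes $\partial_{z_1}u_0,\partial_{z_2}u_0$ (and the corresponding $A$-components) and the infinitesimal gauge $iu_0$. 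I would fix the gauge by imposing a Coulomb-type condition $d^*\alpha=0$, kill the translation directions by the orthogonality constraints
\begin{equation*}
\int_{\R^2}\bracket{(\phi,\alpha),\partial_{z_j}(u_0,\eps^{-1}A_0)}\,dz=0,\quad j=1,2,\text{ for each }y\in M,
\end{equation*}
and, using the spectral theory and coercivity of the planar vortex linearisation (Stuart \cite{stuart1994dynamics}, Gustafson--Sigal \cite{gustafsonrid2000stability}), solve the projected nonlinear problem for $(\phi,\alpha)$ by contraction in weighted norms that encode polynomial decay in $|z|/\eps$ and the conic structure of the ends of $M$.

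Third, the orthogonality Lagrange multipliers give a reduced system of two scalar PDEs on $M$ for $q_\eps$, whose principal part is the Jacobi operator $J_M$ acting componentwise (plus coupling of order $\eps$). The right-hand side is $O(\eps^2)$ in the interior and has the logarithmic behaviour on the ends forced by the $\lambda_k$. Solvability is reduced to invertibility of $J_M$ in weighted $L^\infty$-spaces adapted to the ends, exactly as in \cite{del2013entire}: nondegeneracy \eqref{nondeg} together with the balancing condition \eqref{balendcond} removes the bounded kernel, and the separation condition \eqref{divendcond} prevents the asymptotic graphs \eqref{graph ends with beta} from colliding, ensuring the approximate solution is well-defined and the linearised Jacobi operator is surjective onto the admissible class of right-hand sides. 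A contraction/fixed-point argument then produces $q_\eps$ with the stated behaviour, and Theorem \ref{Less general theorem} is recovered when all $\lambda_k=0$.

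The main obstacle I foresee is the gauge-invariant linear theory for the outer problem: unlike the scalar Allen--Cahn case, the linearisation couples $(\phi,\alpha)$, has a genuine gauge kernel $iu_0$ that must be quotiented out consistently across $M$, and mixes the two normal directions in $\R^4$ (so the planar vortex kernel generates a \emph{two}-component modulation). Setting up Banach spaces on which this linear operator is uniformly invertible as $\eps\to 0$, with norms sharp enough that the nonlinear terms in \eqref{Energy_4d} are contractions and compatible with the conic/logarithmic geometry of the ends of $M$, is the technical heart of the argument; the subsequent reduced Jacobi problem on $M$ is essentially the one already controlled by \cite{del2013entire}.
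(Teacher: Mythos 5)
Your overall architecture (shifted Fermi coordinates, gluing, infinite-dimensional Lyapunov--Schmidt with the planar vortex as fibrewise profile, reduced Jacobi equation on $M$) matches the paper's, but there are two points where the proposal as written would fail, plus one imprecision that hides the actual mechanism of the last step.

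First, you cannot glue the inner approximation to the constant pair $(1,0)$ outside the tubular neighbourhood. On the outer boundary of that neighbourhood the profile is $u_0((z-q)/\eps)\approx e^{i\theta}$, which winds once around the codimension-2 surface; any interpolation to the constant $1$ must create new zeros of $u$ and an $O(1)$ error over a macroscopic region, destroying the smallness of $S(W)$. The paper instead glues to a \emph{pure gauge} $\boldsymbol{\Psi}=(e^{i\psi},d\psi)$ that extends the angular phase $(e^{i\theta},d\theta)$ to all of $\R^4\setminus M$ (see \S\ref{firstglobapp}); since a pure gauge is an exact solution, the only error is the exponentially small interpolation term in the transition region. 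Some such topologically consistent outer ansatz is unavoidable.

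Second, ``fix the gauge by $d^*\alpha=0$'' is not the right slice and, more importantly, does not address how to pass from a gauge-projected equation back to the full equation. The natural slice is the gauge-orthogonality condition $d^*\omega+\bracket{\phi,iu}=0$ of \eqref{goc}; only on this slice does $S'(W)$ coincide with the elliptic, asymptotically $-\Delta+\mathrm{Id}$ operator $L_W$ of \eqref{decomposition_of_L_cmp} (with plain Coulomb gauge the 1-form equation retains the non-elliptic $d^*d$ and the coupling $d\bracket{\phi,iu}$). Moreover, solving only $\pi_{Z_{W,g}^\perp}S(W+\vphi)=0$ leaves an a priori nonzero gauge component $\Theta_W[\gamma]$ of the residual; the paper removes it by testing against $\Theta_{W+\vphi}[\gamma]$ and using positivity of $-\Delta+|w|^2+\bracket{w,\varphi}$ to conclude $\gamma=0$ (\S\ref{role gauge inv}). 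Your proposal needs an argument of this kind; without it you have only solved a projected problem. Finally, nondegeneracy and balancing do \emph{not} make the Jacobi operator surjective: the bounded kernel $z_0,\dots,z_3$ (plus the constants for the second component) is genuinely there, the reduced equation is solved only modulo corrections $c^j|\A_M|^2\hat z_j$, and the $c^j$ are shown to vanish a posteriori via the Pohozaev-type identities of Lemma \ref{orthogonality Z_i}, which is where the balancing conditions $\sum_k a_k=\sum_k\lambda_k=0$ actually enter. That final step is part of the proof, not a consequence of invertibility.
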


The connection between critical points of functionals 
of Allen--Cahn or Ginzburg--Landau types and, respectively, codimension 1 and 2 minimal submanifolds has long been known, and a large literature has been dedicated to that subject. 
Among other works,  in the Allen--Cahn case we can mention \cite{modica1979gamma, sternberg1988effect, kohn1989local, tonegawa2012stable, roger2008convergence, del2011giorgi, del2013entire, del2015serrin, pacard2003constant}.  
Limits of critical points of min-max type to build codimension 1 minimal surfaces on compact manifolds have recently  been used in \cite{chodosh2020minimal, bellettini2020inhomogeneous, guaraco2018min} as a PDE alternative to the Almgren--Pitts min-max approach  \cite{pitts1976existence, marques2017existence, schoen1981regularity}.

\medskip

As for complex-valued Ginzburg--Landau type equations, 
point vortex concentration in the two dimensional case has been analyzed in many works, among  them the classical  books \cite{bethuel1994ginzburg, sandier2008vortices,pacard2012linear}. 
In higher dimensions, limits towards geodesics in  three dimensional space or more generally codimension 2 minimal surfaces have been analyzed from the optics of the calculus of variations in \cite{riviere1996lines, contreras2017nearly, lin1999complex, bethuel2001asymptotics, montero2004local, jerrard2004local, pigati2021minimal}.
 
\medskip

In the self-dual magnetic Ginzburg--Landau context of energy \eqref{Energy_no_domain} in a compact manifold, Pigati and Stern \cite{pigati2021minimal}
proved that critical points with uniformly bounded energies 
approach the
weight measure of a stationary, codimension 2 integral varifold. Our result can be interpreted as a form of converse of their statement in the (non-compact) entire space for a special class of minimal submanifolds. 
In the recent work \cite{liu2021entire} Liu, Ma, Wei and Wu have found a different class of entire solutions to \eqref{Energy_4d} 
connected with the so-called saddle solution of Allen--Cahn equation and a special minimal surface built by Arezzo and Pacard \cite{arezzo2003complete}. Symmetries allow them to reduce the problem to one in two variables. 

\medskip
After completing this paper, we became aware of the very interesting work by De Philippis and Pigati \cite{pigati-dephilippis}. They have established a result that complements the findings in \cite{pigati2021minimal} for the scenario of a non-degenerate codimension 2 minimal submanifold. Their method, based on variational techniques, does not provide detailed asymptotic information. However, they have successfully resolved the more challenging case of Ginzburg--Landau equations where no induced magnetic field is present. Our techniques do not extend to cover that particular case.

\section{Preliminaries }

In this Section we discuss relevant features of the model that will be used in the proof of Theorems \ref{Less general theorem} and \ref{main theorem}. We will discuss gauge invariance and its effect on the linearised operator, from which follows the important decomposition \eqref{decomposition_of_L_cmp} below. Moreover, we set notation that will be used throughout the paper. 

\medskip

Here and in what follows we define the dot product between pairs as 
\begin{equation*}
	\twovec{\phi}{\omega}\cdot\twovec{\psi}{\eta}=\bracket{\phi,\psi}+\eps^2\omega\cdot\eta.
\end{equation*}
For a pair $W=(u,A)$, we define
\begin{equation}\label{error_function_S}
	S(W)\coloneqq \twovec{-\eps^2\Delta^{A}u-\frac{1}{2}(1-|u|^{2})u}{\eps^2d^{*}dA-\bracket{\nabla^Au,iu}},
\end{equation}
so that $W$ is a solution to \eqref{Energy_4d} if and only if $S(W)=0$.

\subsection{Gauge invariance} 

One of the main features of energy \eqref{Energy_no_domain} is its invariance under the action of the $U(1)$ gauge group, namely
\begin{equation*}
	E(\mathrm{G}_\gamma(u,A))=E(u,A)
\end{equation*}
where $\mathrm{G}_\gamma$ is defined, for any real-valued map $\gamma$, as
\begin{equation}\label{gauge_transform}
	\mathrm{G}_\gamma(u,A)\coloneqq (ue^{i\gamma},A+d\gamma).
\end{equation}
One immediate consequence of this fact is that the global minimiser $(1,0)$ is moved into other global minimisers by gauge transformations, providing directly a collection of solutions to $S(W)=0$ given by 
\begin{equation}\label{pure gauges}
	(e^{i\gamma},d\gamma).
\end{equation}
Solutions of the form \eqref{pure gauges} are called pure gauges. Formally, another way of writing pure gauges is 
\begin{equation}\label{pure gauges v2}
	(\psi,d\psi/i\psi),\quad \psi\colon \Omega\to\mathbb{S}^1.
\end{equation}
where $\Omega$ is the domain considered.

\subsection{The linearised operator}
The Fr\'echet derivative $S'(W)$ around $W$ is given by 
\begin{equation}\label{S'(U)}
	S'(W) \twovec{\phi}{\omega}=\twovec{-\eps^2\Delta^{A}\phi -\frac{1}{2}(1-|u|^{2})\phi+u\bracket{u,\phi} +2i\eps^2\nabla^{A}u\cdot \omega-iud^{*}\omega}{\eps^2d^{*}d\omega+|u|^{2}\omega-\bracket{\nabla^Au,i\phi}-\bracket{\nabla^A\phi,iu}}.
\end{equation}
If $W$ is a solution, part of the kernel of $S'(W)$ is given by the invariance of $E$ under $U(1)$-gauge transformations $\mathrm{G}_\gamma$, given by \eqref{gauge_transform}, for any real-valued $\gamma$. By direct linearisation, it is easy to see that for any real-valued map $\gamma$
\begin{equation}\label{gauge_zero-mode}
	\Theta_W[\gamma]\coloneqq \twovec{iu\gamma}{d\gamma}
\end{equation}
belongs to $\ker S'(W)$.
We can give a characterization of the orthogonality to all elements in the form \eqref{gauge_zero-mode} by computing the formal adjoint of $\Theta_W$: if $\Phi=(\phi, \omega)$
\begin{align*}
	\int\Phi\cdot\Theta_W[\gamma]&=\int\twovec{\phi}{\omega}\cdot\twovec{iu\gamma}{d\gamma}\\
	&=\int\bracket{\phi,iu\gamma}+\eps^2\omega\cdot d\gamma\\
	&=\int\gamma\squared{\eps^2d^*\omega+\bracket{\phi,iu}},
\end{align*}
from where we infer that $\Phi$ is orthogonal to the gauge-kernel if and only if
\begin{equation}\label{goc}
	\Theta_W^*[\Phi]\coloneqq \eps^2d^*\omega+\bracket{\phi,iu}=0.
\end{equation}
The linearised operator \eqref{S'(U)} can be expressed in a more useful form, by the decomposition
\begin{align}\label{decomposition_of_L}
	S'(W) \twovec{\phi}{\omega}=&\twovec{-\eps^2\Delta^{A}\phi-\frac{1}{2}(1-3\left|u\right|^{2})\phi+2i\eps^2\nabla^{A}u\cdot\omega}{
-\eps^2\Delta\omega+\left|u\right|^{2}\omega-2\bracket{\nabla^Au,i\phi}}-\begin{pmatrix}iu\left(\eps^2d^*\omega+\bracket{\phi,iu}\right)\\
d\left(\eps^2d^*\omega+\bracket{\phi,iu}\right)
\end{pmatrix}
\end{align}
where $-\Delta\omega\coloneqq \round{d^*d+dd^*}\omega$ is the Hodge Laplacian on forms. Defining 
\begin{equation}
	L_W\twovec{\phi}{\omega}\coloneqq \twovec{-\eps^2\Delta^{A}\phi-\frac{1}{2}(1-3\left|u\right|^{2})\phi+2i\eps^2\nabla^{A}u\cdot\omega}{
-\eps^2\Delta\omega+\left|u\right|^{2}\omega-2\bracket{\nabla^Au,i\phi}}
\end{equation}
we can write \eqref{decomposition_of_L} as
\begin{equation}\label{decomposition_of_L_cmp}
	S'(W)[\Phi]=L_W[\Phi]-\Theta_W\Theta_W^*[\Phi]
\end{equation}
and obtain a much clearer insight on the linearised. Indeed, for increments satisfying \eqref{goc} we have $S'(W)[\Phi]=L_W[\Phi]$ where $L_W$ is an elliptic operator which behaves at infinity like $-\Delta+\text{Id}$ for all finite energy configurations $W$. For this reason, we refer to $L_W$ as the gauge-corrected linearised.

\medskip

The natural space where the operator $L_W$ is defined and continuous is the space $H^1_W(M)$ of functions  $\Phi=(\phi, \omega)$ for which the the covariant Sobolev norm
\begin{equation}\label{norm H1U}
	\norm{\Phi}^2_{H^{1}_W(M)}=\norm{\phi}^2_{L^2(M)}+\norm{\nabla^A\phi}^2_{L^2(M)}+\norm{\omega}^2_{L^2(M)}+\norm{\nabla\omega}^2_{L^2(M)}
\end{equation}
is finite, being $\nabla$ the Levi-Civita connection on $M$ and $A$ the 1-form of $W$. We introduce a notation that allows us to write in a compact way the gauge-corrected linearised $L_W$. Define the following gradient-like operator
\begin{equation}\label{gradlike}
	\nabla_W\twovec{\phi}{\omega}=\twovec{\nabla^A\phi}{(d+d^*)\omega}
\end{equation}
where again $W=(u, A)$. Observe that applying the operator $d^*+d$ to a $p$ form one obtains the formal sum of a $(p-1)$-form and a $(p+1)$-form. In what follows this will not play a role; we only use it to be able to define the Hodge operator as a ``square'' of some first order operator 
\begin{equation*}
	\Delta\omega=(d+d^*)^*(d+d^*)\omega.
\end{equation*}
If in particular $\omega$ is a 1-form, then we define for any vector $v$
\begin{equation*}
	\nabla_{v,W}\twovec{\phi}{\omega}\coloneqq \nabla_W\twovec{\phi}{\omega}(v)= \twovec{\nabla^A_v\phi}{d\omega(\cdot,v)}.
\end{equation*}
Composing $\nabla_W$ with its formal adjoint we obtain a laplacian-like operator 
\begin{equation*}
	-\Delta_W=\nabla_W^*\nabla_W
\end{equation*}
which explicitly reads 
\begin{equation*}
	-\Delta_W\twovec{\phi}{\omega}=\twovec{-\Delta^A\phi}{-\Delta\omega}.
\end{equation*}
With this notation, the linearised can be written as 
\begin{equation}\label{expanded_linearized}
	L_W[\Phi]=-\eps^2\Delta_W\Phi+\Phi+T_W\Phi
\end{equation} 
where
\begin{equation*}
	T_W\twovec{\phi}{\omega}\coloneqq \twovec{-\frac{3}{2}(1-\left|u\right|^{2})\phi+2i\eps^2\nabla^{A}u\cdot\omega}{
-(1-\left|u\right|^{2})\omega-2\bracket{\nabla^Au,i\phi}}.
\end{equation*}
The sense of this expression is that it highlights the behaviour at infinity of $L_W$, namely $L_W\sim -\Delta_W+\mathrm{Id}$. This is due to the fact that $T_W$ vanishes at infinity whenever $W$ is a finite energy configuration, that is 
\begin{equation*}
	|u|^2\to 1,\quad \nabla^Au\to 0\quad\text{near }\infty.
\end{equation*}

\subsection{The planar case}
Suppose we have a solution $W_0=(u_0,A_0)$ to $S(W_0)=0$ in the Euclidean space $\R^n$. As pointed out in \cite{stuart1994dynamics}, in this case there is a $n$-parameter family of isometries of the space, namely the translations, that generate elements of the kernel of $S'(W_0)$. Direct differentiation along the $j$-th direction produces elements $\widetilde{V}_j=(\partial_ju_0,\partial_jA_0)$ which are not in $L^2$ and in general have non-vanishing projection along the gauge-kernel. On the other hand, setting 
\begin{equation}\label{elements Vj}
	V_j\coloneqq \widetilde{V}_j-\Theta_{W_0}[A_{0j}]
\end{equation}
amounts to projecting $\widetilde V_j$ onto the orthogonal of the gauge-kernel, in the sense that 
\begin{align*}
	\Theta_{W_0}^*[V_j]&=-\partial_i(\partial_jA_{0i})+\partial_{ii}A_{0j}+\langle\nabla_j^{A_0}u_0,iu_0\rangle\\
	&=-\round{d^*dA_0-\bracket{\nabla^{A_0}u_0,iu_0}}_j=0.
\end{align*}
for $j=1,\dots,n$. Moreover
\begin{equation*}
	V_j=\twovec{\nabla_j^{A_0}u_0}{(\partial_jA_{0i}-\partial_iA_{0j})dz^i}\in L^2(\R^n),\qquad \forall j\in\{1,\dots,n\}.
\end{equation*}

\medskip

As already mentioned, one case that will be of particular interest for us is the planar case $\R^2$ with the single-vortex solution centred at the origin, that is, the solution $U_0=(u_0,A_0)$ introduced in \eqref{radialansatz}. The gauge-corrected linearised around ${U_0}$, which will be denoted with 
\begin{equation}\label{Lstraight}
	\sdf{L}\coloneqq S'({U_0})+\Theta_{U_0}\Theta_{U_0}^*,
\end{equation}
is non-degenerate and stable, see \cite{stuart1994dynamics} and \cite{gustafsonrid2000stability}, that its kernel is generated only by the elements \eqref{elements Vj}, which in this case are given by 
\begin{equation}\label{expression sdfVj}
	\sdf{V}_1=\twovec{f'}{\frac{a'}{r}dt^2},\quad \sdf{V}_2=\twovec{if'}{-\frac{a'}{r}dt^1},
\end{equation}
being $f$ and $a$ the solutions to system \eqref{system_fa}. We claim that
\begin{equation*}
\sdf{V}_\alpha=(\nabla_{U_0}{U_0})(e_\alpha)\eqqcolon\nabla_{\alpha,{U_0}}{U_0},\quad\alpha=1,2.
\end{equation*} 
Indeed, using that $d^*{A_0}=0$, we have 
\begin{equation}\label{grad U=V}
	\nabla_{\alpha,{U_0}}\twovec{u_0}{{A_0}}=\twovec{\nabla^{{A_0}}u_0}{d{A_0}}(e_\alpha)=\twovec{\partial_\alpha u_0-i{A_0}_\alpha u_0}{(\partial_\beta{A_0}_\alpha-\partial_\alpha{A_0}_\beta)dt^\beta}.
\end{equation}
Let $\alpha=1$; then, using the first order system \eqref{first order}, we have
\begin{align*}
	\partial_1u_0-iA_{01}u_0&=e^{i\theta}\left[f'\partial_1r+if\partial_1\theta-iaf\partial_1\theta\right]\\
	&=e^{i\theta}\left[f'\cos\theta-if(1-a)\frac{\sin\theta}r\right]\\
	&=f'e^{i\theta}[\cos\theta-i\sin\theta]\\
	&=f'
\end{align*}
and 
\begin{align*}
	(\partial_\beta{A_{01}}-\partial_1{A_0}_\beta)dt^\beta&=[\partial_1(a\partial_2\theta)-\partial_2(a\partial_1\theta)]dt^2\\
	&=a'(\partial_1r\partial_2\theta-\partial_2r\partial_1\theta)dt^2\\
	&=(a'/r)dt^2
\end{align*}
With a similar calculation for the case $\alpha=2$ we find
\begin{equation*}
	\nabla_{\alpha,{U_0}}\twovec{u_0}{{A_0}}=\begin{cases}
		(f',\frac{a'}{r}dt^2)^T&\text{if }\alpha=1\\
		(if',-\frac{a'}{r}dt^1)^T&\text{if }\alpha=2
	\end{cases}
\end{equation*}
and the claim is proved.

\medskip

Let us denote $Z_{U_0}\coloneqq \ker\sdf{L}=\mathrm{span}\{\sdf{V}_1,\sdf{V}_2\}$. As shown in \cite{stuart1994dynamics}, it holds the coercivity estimate on $Z_{U_0}^\perp$
\begin{equation}\label{coercivity}
	\langle \sdf{L}[\Phi],\Phi\rangle_{L^2}\geq c\norm{\Phi}^2_{H^{1}_{U_0}},\quad\forall\Phi\in Z_{{U_0}}^\perp
\end{equation}
for some $c>0$.
Lastly, we remark that the coercivity estimate \eqref{coercivity} yields directly, via Lax--Milgram Theorem, to a solvability theory for the equation 
\begin{equation}\label{lstreq}
	L[\Phi]=\Psi\quad \text{on }\R^2
\end{equation}
for a right-hand side $\Psi\in Z_{{U_0}}^\perp$. Precisely it holds the following result.
\begin{lemma}\label{invertibility lstr}
	Let $\Psi\in L^2(\R^2)$ satisfy
	\begin{equation}\label{lemma1 ort cond}
		\int_{\R^2}\Psi(t)\cdot\sdf{V}_\alpha(t) dt=0,\quad \alpha=1,2,
	\end{equation} 
	where $\sdf{V}_\alpha$ is as in \eqref{expression sdfVj}. Then there exists a unique solution $\Phi\in H^1_{U_0}(\R^2)$ to problem \eqref{lstreq} satisfying 
	\begin{equation}
		\int_{\R^2}\Phi(t)\cdot\sdf{V}_\alpha(t) dt=0,\quad \alpha=1,2.
	\end{equation}
	Besides,
	\begin{equation*}
		\|\Phi\|_{H^1_{U_0}(\R^2)}\leq C\|\Psi\|_{L^2(\R^2)}.
	\end{equation*}
	for some $C>0$.
\end{lemma}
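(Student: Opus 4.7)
The strategy is to apply Lax--Milgram to the bilinear form associated to $\sdf{L}$ on the closed subspace of $H^1_{U_0}(\R^2)$ perpendicular to the translation zero modes, and then promote the resulting weak solution on that subspace to a solution on the full space by exploiting the compatibility condition \eqref{lemma1 ort cond}.

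First I would set $H := H^1_{U_0}(\R^2) \cap Z_{U_0,t}^{\perp}$, which is a closed subspace of $H^1_{U_0}(\R^2)$ because the $L^2$-pairings $\Phi\mapsto\int\Phi\cdot\sdf{V}_\alpha$ are continuous on $H^1_{U_0}$ (recall $\sdf{V}_\alpha\in L^2(\R^2)$ by \eqref{expression sdfVj} and the asymptotics of $f$ and $a$). After integration by parts, the expression
$$B(\Phi,\Xi)\ :=\ \int_{\R^2}\sdf{L}[\Phi]\cdot\Xi\,dt$$
involves only first- and zero-order terms with bounded coefficients: $(a/r)^2$ is bounded since $a(0)=0$ and $a\to 1$, the functions $f^2$, $f'$, $1-3f^2$ are bounded, and the first-order cross term $-2a\,\partial_\theta\phi_2$ is controlled by $|\nabla\phi_2|$ using $|\partial_\theta\phi|\leq r|\nabla\phi|$ together with the boundedness of $a/r$. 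Hence $B$ is a bounded symmetric bilinear form on $H^1_{U_0}(\R^2)\times H^1_{U_0}(\R^2)$. The coercivity estimate \eqref{coercivity} reads $B(\Phi,\Phi)\geq c\|\Phi\|^2_{H^1_{U_0}}$ on $H$, and $\Xi\mapsto\int\Psi\cdot\Xi$ is a bounded linear functional on $H$ since $\Psi\in L^2(\R^2)$. Lax--Milgram then produces a unique $\Phi\in H$ with $B(\Phi,\Xi)=\int\Psi\cdot\Xi$ for all $\Xi\in H$, together with the estimate $\|\Phi\|_{H^1_{U_0}}\leq C\|\Psi\|_{L^2}$.

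The next step is to extend this identity to arbitrary $\Xi\in H^1_{U_0}(\R^2)$. Writing $\Xi=\Xi^{\perp}+c_1\sdf{V}_1+c_2\sdf{V}_2$ with $\Xi^{\perp}\in H$, I would use that each $\sdf{V}_\alpha$ lies in $\ker S'(U_0)$ and also satisfies $\Theta^*_{U_0}[\sdf{V}_\alpha]=0$ by the very construction of the translation zero modes recalled above the lemma. The decomposition \eqref{decomposition_of_L_cmp} then gives $\sdf{L}[\sdf{V}_\alpha]=S'(U_0)[\sdf{V}_\alpha]+\Theta_{U_0}\Theta^*_{U_0}[\sdf{V}_\alpha]=0$, and symmetry of $\sdf{L}$ together with \eqref{lemma1 ort cond} yields
$$B(\Phi,\sdf{V}_\alpha)\ =\ \int\Phi\cdot\sdf{L}[\sdf{V}_\alpha]\ =\ 0\ =\ \int\Psi\cdot\sdf{V}_\alpha.$$
Hence $\sdf{L}[\Phi]=\Psi$ weakly on $H^1_{U_0}(\R^2)$, and therefore in $L^2(\R^2)$. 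Uniqueness in the class $H$ is the Lax--Milgram uniqueness; any other $H^1_{U_0}$ solution differs from $\Phi$ by an element of $\ker\sdf{L}$ which, by the same decomposition argument, lies in $Z_{U_0,t}$ and is therefore accounted for.

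The step I expect to be the main obstacle is confirming that the first-order cross term $-2a\,\partial_\theta$ in \eqref{Lstraight} conspires with the covariant-Laplace piece and the magnetic-potential term to produce a genuinely \emph{bounded} and \emph{symmetric} bilinear form on $H^1_{U_0}$, rather than a merely densely-defined one; the cleanest way to see this is to recognize $B(\Phi,\Phi)$ as (up to integration by parts) the second variation of the Ginzburg--Landau energy at $U_0$ in the direction $\Phi$ plus the gauge penalization $\int|\Theta^*_{U_0}[\Phi]|^2$, whose control by $\|\Phi\|^2_{H^1_{U_0}}$ is precisely what the covariant Sobolev norm \eqref{norm H1U} is tailored for. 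Once this is in place, both continuity of $B$ and its coercivity on $H$ follow from known facts about the model solution, and the whole argument closes in a standard Lax--Milgram fashion.
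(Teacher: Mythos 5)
Your proposal is correct and follows exactly the route the paper intends: the paper explicitly derives this lemma from the coercivity estimate \eqref{coercivity} via Lax--Milgram on $H^1_{U_0}\cap Z_{U_0,t}^\perp$, and then the orthogonality hypothesis together with $\sdf{L}[\sdf{V}_\alpha]=0$ (which holds since $\sdf{V}_\alpha\in\ker S'(U_0)$ and $\Theta^*_{U_0}[\sdf{V}_\alpha]=0$) upgrades the weak identity to the full space, just as you argue. Your only cosmetic wrinkle is the bound on the angular cross term, which works because the coefficient coming from the covariant Laplacian is really $a\,\nabla\theta\cdot\nabla=\tfrac{a}{r^2}\partial_\theta$ (so that $\tfrac{a}{r}\cdot\tfrac{1}{r}|\partial_\theta\phi|\le \tfrac{a}{r}|\nabla\phi|$), consistent with your use of the boundedness of $a/r$.
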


\section{Outline of the proof}\label{outline}

In this section we sketch the proof of Theorems \ref{Less general theorem} and  \ref{main theorem} and explain the main ideas.  We follow the lines of  \cite{del2006variational, del2011giorgi, del2013entire,del2015serrin, pacard2003constant, pacard2012linear, ting2013multi}, which consists of finding a precise global approximation starting from a non-degenerate lower dimensional solution and then we build an actual solution as a perturbation of such approximation. A major difference in this case is that the equation for the perturbation is a fixed point formulation whose main term has an infinite-dimensional kernel due to gauge invariance.

\medskip

Firstly, we define the approximate solution, which in our case will be a pair denoted $W$. This will encode alone the local behaviour \eqref{aaa} around the vortex set. By embeddedness, any point sufficiently close to $M$ can be written in Fermi coordinates
\begin{equation*}
	x=X(y,z)=y+z_1\nu_1(y)+z_2\nu_2(y),\quad y\in M,\ |z|<\delta.
\end{equation*}
We choose as a local approximation
\begin{equation*}
	W_0(y,z)=U_0\left(z/\eps-h(y)\right),
\end{equation*}
where we left implicit the composition with $X$ and $h\colon M\to \R^2$ is a parameter, which depends on $\eps$. A precise calculation of the error of approximation $S(W_0)$, performed in \S\ref{Size S(W0)}, suggests that a better approximation is obtained by setting $W_1=W_0+\eps^2\Lambda$, where $\Lambda$ is a decaying term. Using the minimality of $M$, we compute
\begin{equation}\label{expansion S(W_1)}
	S(W_1)=\eps^2\J[h]\sdf{V}_\alpha(t)+\eps^3(k_1^3+k_2^3)t_1^2\sdf{V}_\alpha(t)+\mathsf{Rem}
\end{equation}
where $k_1,k_2$ are the principal curvatures of $M$ and $\mathsf{Rem}=O(\eps^3e^{-\sigma|t|})$. Here $t=z/\eps-h(y)$ and $\J$ is the Jacobi operator of $M$, namely the second variation of the area functional. This approximation will be good enough for our purposes. It remains to extend it beyond the support of the Fermi coordinates. For a suitable cut-off function $\zeta$ supported in a neighbourhood of $M$, we set
\begin{equation*}
	W=\zeta W_1+(1-\zeta)\boldsymbol{\Psi}
\end{equation*}
where $\boldsymbol{\Psi}$ is a pure gauge, carefully chosen in order to create only very small extra terms in the global error $S(W)$.

\medskip

Next, we set up the perturbation scheme. We seek a true solution of the form $W+\varPhi$, which is equivalent to solving 
\begin{equation}\label{sketch fuleq}
	S(W+\vphi)=0\quad \text{in }\R^4,
\end{equation}
where $S$ is given by \eqref{error_function_S}.
We write, equivalently
\begin{equation}\label{sketch S'+E+N}
	S'(W)[\vphi]=-E-N(\vphi)
\end{equation}
where $E=S(W)$ is the error of approximation, $S'(W)$ is the linearised operator of \eqref{error_function_S} and
\begin{equation*}
	N(\vphi)=S(W+\vphi)-S(W)-S'(W)[\vphi]
\end{equation*} 
is formally a quadratic operator in $\vphi$. Starting from \eqref{sketch S'+E+N}, it is evident that finding a bounded (in a suitable topology) inverse to the linearised operator $S'(W)$ will allow us rephrase the equation for $\vphi$ as a fixed point problem. However, as already mentioned, gauge-invariance makes the invertibility of $S'(W)$ challenging. Rather than this, we develop an invertibility theory for
\begin{equation*}
	S'(W)+\Theta_W\Theta_W^*=L_W
\end{equation*}
which is the ``gauge-corrected'' linearised. More precisely, the key equation to be solved is 
\begin{equation}\label{linearised fixed point}
	L_W[\vphi]+E+N(\vphi)=0\quad \text{in }\R^4.
\end{equation}
By doing so, for such $\vphi$ the it will hold
\begin{equation}\label{solution with correction}
	S(W+\vphi)=-\Theta_W\Theta_W[\vphi]
\end{equation}
So the price to pay for this correction is the presence of a right-hand side in \eqref{sketch fuleq}. Actually, as we will see shortly, the natural degeneracies of the manifold due to ambient isometries prevent, in principle, the possibility of solving \eqref{linearised fixed point} as it is. Again, the equation is solvable up to a correction term on the right-hand side, which will be added to the one already present in \eqref{solution with correction}. Lastly, we will show \emph{a posteriori} that the natural symmetries of the solution constructed will make all these correction terms vanish, thus finding a true solution.

\medskip

The main technical result of the paper is the invertibility theory for $L_W$. The proof is based on the fact that, close to the manifold, it holds
\begin{align*}
	L_{W}[\vphi]=\sdf{L}[\vphi]-\eps^2\Delta_{M}\vphi+O(\eps^2D\vphi)
\end{align*}
where $D$ is a first order operator and we denoted $\Delta_{M}(\varphi,\eta)=(\Delta_{M}\varphi,\Delta_{H,M}\eta)$, being $\Delta_{H,M}=d^*d+dd^*$ the Hodge Laplacian on $M$. Here , $\sdf{L}$ is the 2-dimensional linearised given by \eqref{Lstraight} and it is independent by the $y$ variable. From this expression, it can be seen the role of the Jacobi operator in the resolution of \eqref{linearised fixed point}: we can formally write it as 
\begin{equation}\label{sketch eq to solve}
	\sdf{L}[\vphi]=-E-N(\vphi)+\eps^2\Delta_{M}\vphi+O(\eps^2D\vphi).
\end{equation}
We can obtain an improvement of the approximation by solving $\sdf{L}[\vphi]=-E$. According to Lemma \ref{invertibility lstr}, this is doable if for every $y\in M$
\begin{equation}\label{orthogonality condition E}
		\int_{\R^2}E(y,t)\cdot\sdf{V}_\beta(t)=0,\quad \beta=1,2.
	\end{equation}
Using \eqref{expansion S(W_1)}, \eqref{orthogonality condition E} becomes 
\begin{equation*}
	\eps^2\J[h](y)\int_{\R^2}|\sdf{V}_\alpha(t)|^2dt+\eps^3(k^3_1(y)+k^3_2(y))\int_{\R^2}t_1^2|\sdf{V}_\alpha(t)|^2dt+\int_{\R^2}\sdf{Rem}\cdot\sdf{V}_\alpha(t)dt=0
\end{equation*}
 An equation of this form can indeed be solved, using the nondegeneracy hypothesis of the Jacobi operator, up to adding corrections terms proportional to the Jacobi fields \eqref{jacobi fields}. 
 
\medskip

To sum up, the proof is divided into the following steps:
\begin{itemize}
	\item Define an approximation to a solution $W_0$, locally around $M$, using the canonical profile $U_0$.
	\item Add a correction term of order $\eps^2$ to $W_0$, obtaining an improved approximation $W_1$.
	\item Glue $W_1$ to a suitable pure gauge defined away from $M$, to get a global approximation $W$.
	\item After formulating the problem as a fixed point, solve using the invertibility theory of the gauge-corrected linearised, up to a correction term that depends on $h$.
	\item Use the nondegeneracy hypothesis for the Jacobi operator to find a suitable perturbation $h$ that cancels the correction term above. We can do so up to adding new correction terms.
	\item Show that the corrections automatically vanish.
\end{itemize}

\section{Norms}
In this section we introduce several norms which will be used throughout the rest of the paper. For $0<\gamma<1$ we denote
\begin{equation}
	\norm{\varphi}_{C^{0,\gamma}(\Lambda)}\coloneqq \norm{\varphi}_{L^\infty(\Lambda)}+\sup_{p\in\Lambda}\ [\varphi]_{\gamma,B(p,1)}
\end{equation}
where 
\begin{equation}\label{holder seminorm}
	[\varphi]_{\gamma,X}\coloneqq\sup_{x,y\in X,x\ne y}\frac{|\varphi(x)-\varphi(y)|}{|x-y|^\gamma}
\end{equation}
is the usual H\"older box. The $C^{k,\gamma}$ norm is defined as 
\begin{equation}\label{norm Ck}
	\norm{\varphi}_{C^{k,\gamma}(\Lambda)}\coloneqq\norm{\varphi}_{C^{0,\gamma}(\Lambda)}+\|D^k\varphi\|_{C^{0,\gamma}(\Lambda)}.
\end{equation}
In what follows we will need to measure the size and decay of three types of functions, namely those defined on $M$, those defined on the product $M\times\R^2$ and those defined on $\R^4$.
\subsection*{Norms on $M$} Let $\phi$ be a function defined on $M$. We want to define a norm to account the decay of $\phi$ along the manifold. We defined the map
\begin{align*}
	r(y)&=\sqrt{1+|\iota(y)|^2},\quad y\in M
\end{align*}
where $\iota$ is the embedding $M\to\R^3$. We set
\begin{equation*}
	\|\phi\|_{C^{0,\gamma}_\mu(M)}\coloneqq \|r^\mu\phi\|_{C^{0,\gamma}(M)}
\end{equation*}
and, for $k\geq 1$,
\begin{equation}\label{norm Ckgammamu}
	\norm{\phi}_{C^{k,\gamma}_{\mu}(M)}\coloneqq\norm{\phi}_{C^{0,\gamma}_{\mu}(M)}+\|D^k\phi\|_{C^{0,\gamma}_{\mu}(M)}.
\end{equation}
\subsection*{Norms on $M\times\R^2$}
We will use a weighted version of \eqref{norm Ck}. For $\mu\geq0$ and $\sigma\in (0,1)$, let 
\begin{equation}\label{norm C0gammamusigma}
	\norm{\varphi}_{C^{0,\gamma}_{\mu,\sigma}(M\times\R^2)}\coloneqq\|r^\mu e^{\sigma|t|}\varphi\|_{C^{0,\gamma}(M\times\R^2)}
\end{equation}
and, for any $k\geq 1$,
\begin{equation}\label{norm Ckgammamusigma}
	\norm{\varphi}_{C^{k,\gamma}_{\mu,\sigma}(M\times\R^2)}\coloneqq\norm{\varphi}_{C^{0,\gamma}_{\mu,\sigma}(M\times\R^2)}+\|D^k\varphi\|_{C^{0,\gamma}_{\mu,\sigma}(M\times\R^2)}
\end{equation}
We will use the norms \eqref{norm C0gammamusigma} and \eqref{norm Ckgammamusigma} indistinctly for functions $\varphi=\varphi(y,x)$ defined on $M\times\R^2$ and for functions $\varphi=\varphi(X_{h}(y,x))$, possibly defined in the whole space $\R^4$ but supported in a region where Fermi coordinates are defined, and hence understood as defined on $M\times\R^2$.
\subsection*{Norms on $\R^4$}
Finally we introduce a standard weighted norm in the whole space $\R^4$, simply by setting 
\begin{equation*}
	\|\psi\|_{C^{0,\gamma}_\mu(\R^4)}\coloneqq\|(1+|x|)^\mu\psi\|_{C^{0,\gamma}(\R^4)}
\end{equation*}
and
\begin{equation*}
	\|\psi\|_{C^{k,\gamma}_\mu(\R^4)}\coloneqq\|\psi\|_{C^{0,\gamma}_\mu(\R^4)}+\|D^k\psi\|_{C^{0,\gamma}_\mu(\R^4)}
\end{equation*}
where $\psi$ is defined in $\R^4$ and $\mu\geq 0$.

\section{The approximate solution}\label{section the first app}

Now we start carrying out the details of the proof, beginning with the construction of the first, local approximation $W_0$. In what follows, $M\subset\R^4$ is a two dimensional complete, minimal surface with finite total curvature, which is non-degenerate in the sense of \eqref{nondeg}. We consider the case in which $M$ is embedded in $\R^3$ and that the subsequent immersion in $\R^4$ is done in the canonical way. 

\medskip

 In the following calculations we will always adopt the following convention unless differently specified: Latin letters classically used for indexing  $(i,j,k,\ldots)$ will be used for tangential coordinates to the manifold, while Greek letters $(\alpha,\beta,\gamma,\ldots)$
 will denote coordinates in the normal direction. If needed, we will use the first letters of the Latin alphabet $(a,b,c\ldots)$ 
 to indicate all coordinates at once.



\subsection{First local approximation}
We parametrise a neighbourhood of $M$ in $\R^4$ as follows: let $(\nu_1,\nu_2)\coloneqq (\nu,{\bf e}_4)$ and let $X\colon\mathcal{O}\to \N$, given by
\begin{equation}
	X(y,z)=y+z^\beta\nu_\beta(y)
\end{equation}
be the Fermi coordinates around $M$. We can choose
\begin{equation*}
	\mathcal{O}=\braces{\round{y,z}\in M\times\R^2\ :\ |z|<\tau(y)}
\end{equation*}
where $\tau : M\to \R^+$ is given by 
\begin{equation*}
	\tau(y)=\delta\log(1+ r(y)),\quad  r(y)=\sqrt{1+|\iota(y)|^2},
\end{equation*}
being $\iota:M\to\R^4$ the immersion, and 
\begin{equation}
    \mathcal{N}\coloneqq X(\mathcal{O}).
\end{equation}
Let now $h=(h^1,h^2)$ be a vector-valued function defined on $M$ and consider the change of coordinates
\begin{equation*}
	z=\eps(t+h(y)),
\end{equation*}
which generates a new parametrisation $X_{h}\colon\mathcal{O}_h\to \mathcal{N}$ given by 
\begin{equation}\label{X_epsh}
 	X_{h}(y,t)=y+\eps(t^\beta+h^\beta(y))\nu_\beta(y)
 \end{equation}
 where 
 \begin{equation*}
		\mathcal{O}_{h}=\braces{\round{y,t}\in M\times\R^2\ :\ |t+h(y)|<\tau(y)/\eps}.
\end{equation*}
Observe that, in principle, with this choice of $\tau$ the map $X$ is not necessarily one-to-one. On the other hand, in \S\ref{preciseassumh} below we will make a precise assumption on $h$ that will guarantee injectivity for $X_h$.
We define the first approximation to be
\begin{equation}
	W_0(y,t)\coloneqq {U_0}(t)
\end{equation}
 where $U_0=(u_0,A_0)$ is given by \eqref{radialansatz} and we left implicit the composition with the chart $X_{h}$. 
 
 \subsection{Precise assumptions on $h$}\label{preciseassumh}
 Here we state precisely what the assumptions on the vertical perturbation $h$ are. As we have seen in Section \ref{outline} the step of making the projections vanish will amount to solve a system having as a principal part the Jacobi operator of $h$, which in our case reads
\begin{equation}\label{jacobi in codim 2}
	\mathcal{J}[h]=\twovec{\Delta_Mh^1+|\A_M|^2h^1}{\Delta_Mh^2}
\end{equation}
and such system will be solved via a fixed point argument in a space of functions whose size is small in $\eps$. This procedure is not doable in a straightforward way as the right-hand side of the fixed point is not automatically small. 

\medskip

This issue has already been addressed in \cite[\S3.3]{del2013entire} and is due to the fact that if two consecutive ends of $M$ are parallel, the error of approximation created in the region between the two ends is very small but does not decay with the distance from the origin, and eventually dominates the whole right-hand side.

\medskip

This issue can be solved by adjusting the parameter $h$. Consider any $m$-tuple of real numbers $(\lambda_1,\dots,\lambda_m)$ satisfying
\begin{equation}\label{balancing condition}
	\lambda_1\leq \lambda_2\leq\cdots\leq\lambda_m,\qquad\sum_{j=1}^m\lambda_j=0.
\end{equation} 
We have the following result, the proof of which is postponed to Section \ref{Proof_left}.
\begin{lemma}\label{behaviour h0}
	For any real numbers $\lambda_1,\dots,\lambda_m$ satisfying \eqref{balancing condition} there exists a smooth function $h_0^1$, defined on $M$, such that
	\begin{equation*}
		\Delta_Mh_0^1+|\A_M|^2h_0^1=0\quad\text{on }M
	\end{equation*}
	and such that on each end $M_j$
	\begin{equation*}
		h_0^1(y)=(-1)^j\lambda_j\log r+\eta\quad \text{on }M_j,
	\end{equation*}
	where $\eta$ satisfies 
	\begin{equation*}
		\|\eta\|_{L^{\infty}(M)}+\|r^2D\eta\|_{L^{\infty}(M)}<+\infty.
	\end{equation*}
\end{lemma}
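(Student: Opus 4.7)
The approach is to write $h_0^1 = \bar h + \eta$, where $\bar h$ is an explicit smooth function carrying the prescribed logarithmic behavior on the ends and $\eta$ is a bounded correction obtained from the Fredholm/solvability theory for the Jacobi operator $J_M[\eta] := \Delta_M \eta + |\A_M|^2 \eta$. Concretely, fix smooth cutoffs $\chi_j : M \to [0,1]$ with $\chi_j \equiv 1$ on $M_j \cap \{|y'| \geq 2 R_0\}$ and $\chi_j \equiv 0$ outside $M_j$, and set
\[
\bar h(y) := \sum_{j=1}^m (-1)^j \lambda_j \, \chi_j(y) \log |y'(y)|.
\]
By construction $\bar h$ is smooth on $M$ and coincides with $(-1)^j \lambda_j \log r$ on each end outside a compact set.

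Next, I would estimate $E := -J_M[\bar h]$. On each end $M_j$, using graph coordinates $\xi = y'$, the expansion \eqref{laplacian on Mk} gives $\Delta_M = \Delta_\xi + O(|\xi|^{-2}) \partial_{ij} + O(|\xi|^{-3}) \partial_j$. Since $\Delta_\xi \log |\xi| = 0$ off the origin, and $|\A_M|^2 = O(|\xi|^{-4})$ by \eqref{expansion F_k}, a direct computation yields $|E(y)| \leq C (1+r)^{-4} \log(2+r)$ on the ends, while the remaining contribution, coming from the cutoff transition, is compactly supported. In particular $E$ is smooth and rapidly integrable against each of $z_0, z_1, z_2, z_3$.

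Finally, one solves $J_M[\eta] = E$. By the non-degeneracy assumption \eqref{nondeg}, the bounded kernel of $J_M$ is spanned by $z_0, z_1, z_2, z_3$. Using a weighted Schauder/Fredholm framework for $J_M$ on minimal surfaces with catenoidal or planar ends, as developed in \cite{del2013entire}, the equation admits a solution $\eta$ with $\|\eta\|_{L^\infty(M)} + \|r^2 D\eta\|_{L^\infty(M)} < \infty$ provided
\[
\int_M E\, z_i \, dV_M = 0, \qquad i = 0, 1, 2, 3.
\]
The orthogonalities to $z_0, z_1, z_2$ follow from the asymptotic radial symmetry of $E$ on each end (angular integration kills horizontal translations and the axial rotation), together with compactness of the transition contributions and the fast decay of these Jacobi fields. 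The orthogonality to $z_3 = \nu \cdot e_3$ is obtained by integrating $E\, z_3 = -J_M[\bar h]\, z_3$ on an exhaustion $\{r \leq R\}$ and using $J_M[z_3] = 0$: everything reduces to boundary circles on each end, and a direct asymptotic computation---using $z_3 \to (-1)^j$ on $M_j$ (the sign encoding the alternating orientation of the unit normal on consecutive height-ordered ends) together with $\partial_r \bar h \sim (-1)^j \lambda_j / r$---produces a residue proportional to $\sum_{j=1}^m \lambda_j$, which vanishes by \eqref{balancing condition}. Setting $h_0^1 := \bar h + \eta$ and invoking elliptic regularity for $J_M$ yields the desired smooth function.

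The main obstacle is the last step: setting up the weighted function-space framework for $J_M$ that simultaneously delivers boundedness of $\eta$ and the decay $\|r^2 D\eta\|_\infty < \infty$, and then tracking the sign conventions on $\nu$ carefully enough to identify orthogonality to $z_3$ with the balancing identity $\sum \lambda_j = 0$. The orthogonality to $z_1, z_2$ is essentially automatic by symmetry, while the estimate on $E$ and the reduction to boundary circles are straightforward once \eqref{expansion F_k} and \eqref{laplacian on Mk} are in hand.
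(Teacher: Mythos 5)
Your proposal is correct and follows essentially the same route as the paper: an explicit profile carrying $(-1)^j\lambda_j\log r$ on the ends, inversion of the Jacobi operator via the corrected Fredholm theory (Lemma \ref{inversion jacobi}), and verification that the correction constants vanish by integrating $J[\bar h]$ against the bounded Jacobi fields over cylinders $C_R$ and passing to boundary terms, with $\sum_j\lambda_j=0$ precisely cancelling the $z_3$-residue. The one point to tighten is the orthogonality to $z_0,z_1,z_2$: the direct symmetry argument does not dispose of the cutoff-transition part of $E$, so you should run the same boundary-term reduction used for $z_3$ and note that the resulting circle integrals vanish as $R\to\infty$ because $z_i=O(r^{-1})$ and $\partial_r z_i=O(r^{-2})$ on the ends, which is exactly how the paper concludes.
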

We remark that the gap condition \eqref{divendcond} is not necessary for the proof of Lemma \ref{behaviour h0} but will be useful later to obtain estimates.

\medskip

At this point we make the following assumption: we write 
\begin{equation*}
	h\coloneqq h_0+h_1.
\end{equation*}
For any vector $\lambda$ satisfying \eqref{balancing condition} let $h_0^1$ be the function predicted by Lemma \ref{behaviour h0} and let $h_0\coloneqq (h_0^1,0)$.
As explained in \cite[\S3.3]{del2013entire}, such choice of $h_0^1$ will let the ends of the $h$-perturbed manifold drift apart fast enough to prevent the creation of the non-decaying error term. On the other hand, we assume for $h_1$ that for some $C>0$
\begin{equation}
	\|h_1\|_*\coloneqq \|h_1\|_\infty+\|D_Mh_1\|_{C^{0,\gamma}_2(M)}+\|D^2_Mh_1\|_{C^{0,\gamma}_4(M)}\leq C\eps.
\end{equation}

\subsection{The size of the error $S(W_0)$}\label{Size S(W0)}
In order to measure how good $W_0$ is as an approximation we compute the error of approximation $S(W_0)$ in $\N$, where $S$ is given by \eqref{error_function_S}.
We begin by expressing the differential operators, namely the connection Laplacian and $d^*d$, in coordinates $X_{h}$. 

\medskip

Given a 1-form $\omega=\omega_adx^a$ the general coordinate expression for $-\Delta^\omega$ is 
\begin{equation*}
	-\Delta^\omega=\frac{1}{\sqrt{\det g}}\partial^\omega_a\round{\sqrt{\det g}g^{ab}\partial^\omega_b}
\end{equation*}
being $\partial_a^\omega=\partial_a-i\omega_a$ the components of the connection gradient $\nabla^\omega$. Similarly, the operator $d^*d$ acting on 1-forms can be written in the following way: let $\omega_{ab}\coloneqq\partial_a\omega_b-\partial_b\omega_a$. Then
\begin{equation*}
	d^*d\omega=-\frac{1}{\sqrt{\det g}}g_{ab}\partial_{c}\left(\sqrt{\det g}g^{dc}g^{eb}\omega_{de}\right)dx^{a}.
\end{equation*}
We will first write the operators in coordinates $X$. First, remark that the Euclidean metric on $\N$ can be expressed as a function of Fermi coordinates as a block matrix
\begin{equation*}
	g=\begin{pmatrix}g_{z} & 0\\
0 & I_{2}
\end{pmatrix},
\end{equation*}
where $I_2$ is the $2\times 2$ identity matrix. This is a consequence of the well-known corresponding expression in codimension 1 (see, for instance, \cite[Lemma 11.1]{del2015serrin}) and our choice of embedding in $\R^4$. In particular, we can write
\begin{equation*}
	g^{ij}=g_z^{ij},\quad g^{j\alpha}=0,\quad g^{\alpha\beta}=\delta_{\alpha\beta}, \quad \text{on }\N
\end{equation*}
where $g^{ij}_z$ is the metric restricted to the manifold
\begin{equation}\label{Mz}
	M_z=\braces{y+z^1\nu(y)\ :\ y\in M}.
\end{equation}
We introduce the functions
\begin{align*}
	a^{ij}&=g^{ij},\\ b_{s}^{ik}&=\frac{1}{\sqrt{\det g}}g_{st}\partial_{j}\left(g^{ij}g^{kt}\sqrt{\det g}\right),\\
	c^{k}&=\frac{1}{\sqrt{\det g}}\partial_{j}\left(\sqrt{\det g}g^{jk}\right),\\ d_{j}^{\beta k}&=\frac{1}{\sqrt{\det g}}g_{ij}\partial_{\beta}\left(g^{ik}\sqrt{\det g}\right),
\end{align*}
and let 
\begin{equation*}
	H_z^\beta=-\frac{1}{\sqrt{\det g}}\partial_{\beta}\left(\sqrt{\det g}\right)
\end{equation*}
be the mean curvature in the direction of $\nu_\beta$. Observe that by the flatness of the immersion, $H_z^2\equiv 0$. The mean curvature vector $H_z$ can be expanded in terms of the principal curvatures of $M$ as
\begin{equation*}
	H_z(y)=H^1_z(y)\nu(y)=\left(\sum_{\ell=1}^2\frac{k_{\ell}(y)}{1-z^1 k_{\ell}(y)}\right)\nu(y),
\end{equation*}
see \cite{del2011giorgi}. In particular, we write
\begin{equation}
	H^1_z(y)=\sum_{k=1}^\infty(z^1)^{j-1}H_j(y),\quad H_j(y)\coloneqq k_1^j+k_2^j.
\end{equation}
We will use a truncated expansion
\begin{equation}\label{expansion meancurvature}
	H_z^1(y)=z^1|\A_M|^2+(z^1)^2H_3(y)+\bar{H}(y)
\end{equation}
where we used the minimality of $M$.
See for instance \cite{del2011giorgi}.
Let us consider local coordinates for $M$ around a generic point $p$, namely
\begin{equation*}
	y=\varphi_p(\xi_1,\xi_2),\quad |\xi|<\rho.
\end{equation*}
Using these, we end up with the following coordinate expression for the operators 
\begin{align}
	-\Delta^\omega\phi&=-a^{ij}\partial^\omega_{ij}\phi-c^j\partial^\omega_j\phi-\partial^\omega_{\alpha\alpha}\phi+H_z^\alpha\partial^\omega_\alpha\phi\label{expansion delta}\\
	d^{*}d\omega&=-a^{ij}\partial_{j}\omega_{ik}d\xi^{k}-b_{k}^{ij}\omega_{ij}d\xi^{k}-\partial_{\beta}\omega_{\beta\gamma}dz^{\gamma}+H_{{z}}^{\beta}\omega_{\beta\gamma}dz^{\gamma}\label{expansion d*d}\\
	&-a^{ij}\partial_{j}\omega_{i\gamma}dz^{\gamma}-c^{i}\omega_{i\gamma}dz^{\gamma}-\partial_{\beta}\omega_{\beta k}d\xi^{k}-d_{k}^{\beta j}\omega_{\beta j}d\xi^{k}\nonumber
\end{align}
where we recall that all coefficients are evaluated at $(\xi,z)$. Here we are abusing again the notation, leaving implicit the composition with the chart. We can also expand
\begin{align*}
	a^{ij}(\xi,z)&=a^{ij}_{0}(\xi)+z^\beta a^{ij}_{1,\beta}(\xi,z)\\
	c^j(\xi,z)&=c^j_{0}(\xi)+z^\beta c^j_{1,\beta}(\xi,z)
\end{align*}
and obtain
\begin{equation}\label{expansion laplacian}
\begin{split}
	a^{ij}\partial_{ij}+c^j\partial_j&=\Delta_M+\eps(t^\beta+h^\beta)\round{a^{ij}_{1,\beta}\partial_{ij}+c^j_{1,\beta}\partial_j}\\
	&\eqqcolon\Delta_M+\eps(t^\beta+h^\beta)D_{1,\beta}.
\end{split}
\end{equation}
Next we analyse how these expansions transform when we switch to $X_{h}$. In what follows we will denote $h_j^\beta=\partial_jh^\beta$, $h_{ij}^\beta=\partial_{ij}h^\beta$ and so on. Changing coordinates yields, for a function $\phi=\phi(\xi,t)$ and a 1-form $\omega=\omega_{k}(\xi,t)d\xi^k+\omega_\alpha(\xi,t)dt^\alpha$, to 
\begin{align*}
	-\eps^2\Delta^{\omega}\phi&=-\eps^2\Delta_{M_{z}}^{\omega}\phi-\partial_{\alpha\alpha}^{\omega}\phi+\eps H_{{z}}^{\alpha}\partial_{\alpha}^{\omega}\phi+\eps^2(\Delta_{M_{z}}h^{\beta})\partial_{\beta}^{\omega}\phi\\
	&+\eps a^{jk}\left[h_{k}^{\beta}\partial_{j\beta}^{\omega}\phi+h_{j}^{\gamma}\partial_{\gamma k}^{\omega}\phi\right]-\eps^2a^{jk}h_{j}^{\gamma}h_{k}^{\beta}\partial_{\gamma\beta}^{\omega}\phi.
\end{align*}
and
\begin{align*}
	\eps^2 d^{*}d_{\eps,h}\omega&=-a^{ij}\Big(\eps^2\partial_{j}\omega_{ik}-\eps h_{j}^{\beta}\partial_{\beta}\omega_{ik}-\eps h_{i}^{\beta}\partial_{j}\omega_{\beta k}-\eps^2h_{ji}^{\beta}\omega_{\beta k}\\
	&-\eps^2h_{jk}^{\beta}\omega_{i\beta}+\eps^3h_{i}^{\gamma}h_{jk}^{\beta}\omega_{\gamma\beta}+\eps^3h_{i}^{\beta}h_{j}^{\gamma}\partial_{\gamma}\omega_{\beta k}\Big)d\xi^{k}\\
	&-\eps b_{k}^{ij}\left(\eps^2\omega_{ij}-\eps h_{i}^{\beta}\omega_{\beta j}-\eps h_{j}^{\beta}\omega_{i\beta}+\eps^2h_{i}^{\gamma}h_{j}^{\beta}\omega_{\gamma\beta}\right)d\xi^{k}-\partial_{\beta}\omega_{\beta k}d\xi^{k}\\
	&-\eps^2 d_{k}^{\beta j}\left(\omega_{\beta j}- h_{j}^{\gamma}\omega_{\beta\gamma}\right)d\xi^{k}+\eps^2h_{k}^{\gamma}H_z^{\beta}\omega_{\beta\gamma}d\xi^{k}-\eps^2h_{k}^{\gamma}c^{i}\left(\omega_{i\gamma}-\eps h_{i}^{\beta}\omega_{\beta\gamma}\right)d\xi^{k}\\
	&-\partial_{\beta}\omega_{\beta\gamma}dt^{\gamma}+\eps H_{z}^{\beta}\omega_{\beta\gamma}dt^{\gamma}-\eps c^{i}\left(\omega_{i\gamma}-\eps h_{i}^{\beta}\omega_{\beta\gamma}\right)dt^{\gamma}\\
	&-a^{ij}\left(\eps\partial_{j}\omega_{i\gamma}-\eps h_{j}^{\beta}\partial_{\beta}\omega_{i\gamma}-\eps^2h_{ij}^{\beta}\omega_{\beta\gamma}-\eps h_{i}^{\beta}\partial_{j}\omega_{\beta\gamma}+\eps^2h_{i}^{\beta}h_{j}^{\delta}\partial_{\delta}\omega_{\beta\gamma}\right)dt^{\gamma}
\end{align*}
where all coefficients above have to be evaluated either at $(\xi)$ or $(\xi,t+h)$. A useful remark which applies to our case is that if the 1-form $\omega$ is purely orthogonal to $M$, in the sense that
$\omega=\omega_\alpha(t)dt^\alpha$, then only the terms of the form $\omega_{\beta\gamma}$ don't vanish and hence
\begin{align*}
	\eps^2d^{*}d\omega&=\left(-\partial_{\beta}\omega_{\beta\gamma}+\eps H_{{z}}^{\beta}\omega_{\beta\gamma}+\eps^2(\Delta_{M_{z}}h^{\beta})\omega_{\beta\gamma}-\eps^2a^{ij}h_{i}^{\beta}h_{j}^{\delta}\partial_{\delta}\omega_{\beta\gamma}\right)dt^{\gamma}\\
	&+\Big(-\eps^{3}a^{ij}h_{i}^{\gamma}h_{jk}^{\beta}\omega_{\gamma\beta}-\eps^{3}b_{k}^{ij}h_{i}^{\gamma}h_{j}^{\beta}\omega_{\gamma\beta}\\
	&+\eps^3d_{k}^{\beta j}h_{j}^{\gamma}\omega_{\beta\gamma}+\eps^2h_{k}^{\gamma}H_z^{\beta}\omega_{\beta\gamma}+\eps^3h_{k}^{\gamma}c^{i}h_{i}^{\beta}\omega_{\beta\gamma}\Big)d\xi^{k}.
\end{align*}
Evaluating $S(W_0)$ using the formulas just found and the fact that ${U_0}$ solves \eqref{full_equations}, we find 
\begin{equation}\label{error of U0 non expanded}
	S(W_0)=\eps^2(\Delta_{M_{z}}h^{\beta})\sdf{V}_\beta+\eps H^\beta_z \sdf{V}_\beta-\eps^2a^{ij}h_i^\beta h_j^\gamma\nabla_{\beta\gamma,{U_0}}{U_0}+R_\xi(h)
\end{equation}
where $\sdf{V}_\beta(t)$ is as in \eqref{expression sdfVj},  
\begin{equation}\label{nablabetagamma}
	\nabla_{\beta\gamma,{U_0}}{U_0}\coloneqq\nabla_{U_0}(\nabla_{U_0}{U_0}(e_\beta))(e_\gamma)
\end{equation} 
being $\nabla_{U_0}$ as in \eqref{gradlike},
and  
\begin{equation*}
	R(h)=O(\eps^2(h+\nabla h)\nabla_{\beta\gamma,{U_0}}{U_0}).
\end{equation*}
Now, let $\sigma\in (0,1)$. Using \eqref{expansion meancurvature} and \eqref{expansion laplacian}, we can write the following expansion of the first error of approximation
\begin{equation}\label{S(W_0)}
	\begin{split}
		S(W_0)&=\varepsilon^{2}(\Delta_{M}h_{1}^{1}+|\A_{M}|^{2}h_{1}^{1})\mathsf{V}_{1}+\varepsilon^{2}(\Delta_{M}h_{1}^{2})\mathsf{V}_{2}\\
	&+\varepsilon^{2}t^{1}\left|\A_{M}\right|^{2}\mathsf{V}_{1}+\varepsilon^{3}(t^{1}+h^{1})^{2}H_{3}\mathsf{V}_{1}+\varepsilon\bar{H}\mathsf{V}_{1}\\
	&+\varepsilon^{3}(t^{\beta}+h^{\beta})\left(D_{1,\beta}h^{1}\right)\mathsf{V}_{1}+\varepsilon^{3}(t^{\beta}+h^{\beta})\left(D_{1,\beta}h^{2}\right)\mathsf{V}_{2}\\
	&-\varepsilon^{2}a^{ij}_0\partial_{i}h_{0}^{\beta}\partial_{j}h_{0}^{\gamma}\nabla_{\beta\gamma,{U_0}}{U_0}-2\varepsilon^{2}a^{ij}_0\partial_{i}h_{0}^{\beta}\partial_{j}h_{1}^{\gamma}\nabla_{\beta\gamma,{U_0}}{U_0}\\
	&-\varepsilon^{2}a^{ij}_0\partial_{i}h_{1}^{\beta}\partial_{j}h_{1}^{\gamma}\nabla_{\beta\gamma,{U_0}}{U_0}-\varepsilon^{2}(t^\delta+h^\delta)a_{1,\delta}^{ij}\partial_{i}h_{0}^{\beta}\partial_{j}h_{0}^{\gamma}\nabla_{\beta\gamma,{U_0}}{U_0}\\
	&-2\varepsilon^{2}(t^\delta+h^\delta)a_{1,\delta}^{ij}\partial_{i}h_{0}^{\beta}\partial_{j}h_{1}^{\gamma}\nabla_{\beta\gamma,{U_0}}{U_0}-\varepsilon^{2}(t^\delta+h^\delta)a_{1,\delta}^{ij}\partial_{i}h_{1}^{\beta}\partial_{j}h_{1}^{\gamma}\nabla_{\beta\gamma,{U_0}}{U_0}\\
	&+O(\eps^3r^{-4}e^{-\sigma|t|})
	\end{split}
\end{equation}
where $\nabla_{\beta\gamma,{U_0}}$ is as in \eqref{nablabetagamma}.


\subsection{Improvement of approximation}\label{improvement subsub} The term \begin{equation}\label{to cancel}
	\mathcal{E}(y,t)=\varepsilon^{2}|\A_M(y)|^2t^{1}\sdf{V}_{1}(t) -\varepsilon^{2}a^{ij}_0(y)\partial_{i}h_{0}^{\beta}(y)\partial_{j}h_{0}^{\gamma}(y)\nabla_{\beta\gamma,{U_0}}{U_0}(t)
\end{equation} 
appearing in \eqref{S(W_0)}
is the largest term of the error not contributing to the projections, in the sense that 
\begin{equation}\label{ort mathcalE}
	\int_{\R^2}\mathcal{E}(y,t)\cdot\sdf{V}_\alpha(t) dt=0.
\end{equation} 
Next step will be to improve the approximation $W_0$ in order to eliminate \eqref{to cancel}. While it may not be strictly necessary, including it makes the expansions of both $h$ and the remainder $\varPhi$ more precise. The solvability conditions are indeed automatically satisfied in the terms quadratic in $\eps$.

\medskip

Thus we improve the approximation, and we do so by setting 
\begin{equation*}
	W_1(y,t)\coloneqq W_0(t)+\eps^2 \Lambda(y,t).
\end{equation*}
We remark that all terms $W_0$, $W_1$ and $\Lambda$ are defined only in a region close to $M$ as they are all defined through Fermi coordinates. The corresponding error can be written as
\begin{align*}
	S(W_1)&=S(W_0)+\eps^2S'(W_0)[\Lambda]+N_{0}(\eps^2\Lambda)\\
	&=S(W_0)+\eps^2\sdf{L}[\Lambda]+\eps^2(S'(W_0)-\sdf{L})[\Lambda]+N_{0}(\eps^2\Lambda)\quad \text{in }\N.
\end{align*}
We recall that $\sdf{L}$ is the two-dimensional linearised around ${U_0}$ given by \eqref{Lstraight}, and we defined 
\begin{equation*}
	N_0(\eps^2\Lambda)\coloneqq S(W_0+\eps^2\Lambda)-S(W_0)-\eps^2S'(W_0)[\Lambda].
\end{equation*} 
If we choose $\Lambda$ such that 
\begin{equation*}
	\sdf{L}[\Lambda]=-|\A_M(\eps y)|^2t^1\sdf{V}_1+a_0^{ij}(\eps y)\partial_{i}h_{0}^{\beta}(\eps y)\partial_{j}h_{0}^{\gamma}(\eps y)\nabla_{\beta\gamma,{U_0}}{U_0}
	\end{equation*}
then the term \eqref{to cancel} in the error of approximation will be erased. It is natural to look for a solution of the form 
\begin{equation*}
	\Lambda(y,t)=-|\A_M(\eps y)|^2\Lambda_1(t)+a_0^{ij}\partial_{i}h_{0}^1(\eps y)\partial_{j}h_{0}^{1}(\eps y)\Lambda_{\beta\gamma}(t)
\end{equation*}
where 
\begin{equation}\label{IoA equations}
	\sdf{L}[\Lambda_1]=t^1\sdf{V}_1(t),\quad \sdf{L}[\Lambda_{\beta\gamma}]=\nabla_{\beta\gamma,{U_0}}{U_0}(t)
\end{equation}
The existence of such solutions is given by Lemma \ref{invertibility lstr}, 
given that the right-hand sides satisfy the orthogonality conditions
\begin{equation*}
	\int_{\R^2}t^1\sdf{V}_1(t)\cdot\sdf{V}_\alpha(t) dt=0,\quad \int_{\R^2}\nabla_{\beta\gamma,{U_0}}{U_0}(t)\cdot\sdf{V}_\alpha(t) dt=0, \quad \alpha=1,2.
\end{equation*} 
Also, since the right hands sides in  \eqref{IoA equations} are both $O(e^{-|t|})$ for $|t|$ large a standard barrier argument along with the fact that $\sdf{L}\sim -\Delta+\mathrm{Id}$ at infinity ensures that 
\begin{equation*}
	\sup_{t\in\R^2}e^{\sigma|t|}|\Lambda(y, t)|<\infty,\quad \forall y\in M,
\end{equation*}
for any $0<\sigma<1$.
In this way, the biggest part of $S(W_0)$ not contributing to the projections is erased. We can estimate the error created in $\mathcal{N}$
\begin{align*}
	\eps^2\left(S'(W_0)-\mathsf{L}\right)[\Lambda]+N_0(\eps^2\Lambda)
	&=-\varepsilon^{4}\big(|\A_M|^2\nabla_{1,{U_0}}\Lambda_{1}-a_{0}^{ij}\partial_{i}h_{0}^{\beta}\partial_{j}h_{0}^{\gamma}\nabla_{1,{U_0}}\Lambda_{\beta\gamma}\big)(\Delta_{M}h_{1}^{1})\\
	&\ -\varepsilon^{4}\big(|\A_M|^2\nabla_{2,{U_0}}\Lambda_{1}-a_{0}^{ij}\partial_{i}h_{0}^{\beta}\partial_{j}h_{0}^{\gamma}\nabla_{2,{U_0}}\Lambda_{\beta\gamma}\big)(\Delta_{M}h_{1}^{2})+R_{0}
\end{align*}
with 
\begin{equation*}
	\left|R_{0}(y,t)\right|\leq C\varepsilon^{3}r(y)^{-4}e^{-|t|}.
\end{equation*}
Thus, the error $S(W_1)$ can be written as
\begin{equation}\label{S(U1)}
	\begin{split}
		S(W_1)
	&=\varepsilon^{2}(\Delta_{M}h_{1}^{1}+|\A_{M}|^{2}h_{1}^{1})\mathsf{V}_{1}+\varepsilon^{2}(\Delta_{M}h_{1}^{2})\mathsf{V}_{2}\\
	&-\varepsilon^{4}\big(|\A_M|^2\nabla_{1,{U_0}}\Lambda_{1}-a_{0}^{ij}\partial_{i}h_{0}^{\beta}\partial_{j}h_{0}^{\gamma}\nabla_{1,{U_0}}\Lambda_{\beta\gamma}\big)(\Delta_{M}h_{1}^{1})\\
	&-\varepsilon^{4}\big(|\A_M|^2\nabla_{2,{U_0}}\Lambda_{1}-a_{0}^{ij}\partial_{i}h_{0}^{\beta}\partial_{j}h_{0}^{\gamma}\nabla_{2,{U_0}}\Lambda_{\beta\gamma}\big)(\Delta_{M}h_{1}^{2})\\
	&-2\varepsilon^{2}a^{ij}_0\partial_{i}h_{0}^{\beta}\partial_{j}h_{1}^{\gamma}\nabla_{\beta\gamma,{U_0}}{U_0}-\varepsilon^{2}a^{ij}_0\partial_{i}h_{1}^{\beta}\partial_{j}h_{1}^{\gamma}\nabla_{\beta\gamma,{U_0}}{U_0}\\
	&-\varepsilon^{3}(t^\delta+h^\delta)a_{1,\delta}^{ij}\partial_{i}h_{0}^{\beta}\partial_{j}h_{0}^{\gamma}\nabla_{\beta\gamma,{U_0}}{U_0}-2\varepsilon^{3}(t^\delta+h^\delta)a_{1,\delta}^{ij}\partial_{i}h_{0}^{\beta}\partial_{j}h_{1}^{\gamma}\nabla_{\beta\gamma,{U_0}}{U_0}\\
	&-\varepsilon^{3}(t^\delta+h^\delta)a_{1,\delta}^{ij}\partial_{i}h_{1}^{\beta}\partial_{j}h_{1}^{\gamma}\nabla_{\beta\gamma,{U_0}}{U_0}+R_1(y,t,h,\nabla_{M}h)
	\end{split}
\end{equation}
where
\begin{equation*}
	\left|R_{1}(y,t,\iota,\varsigma)\right|+\left|\partial_{\iota}R_{1}(y,t,\iota,\varsigma)\right|+\left|\partial_{\varsigma}R_{1}(y,t,\iota,\varsigma)\right|\leq C\varepsilon^{3}\left(1+r(y)\right)^{-4}e^{-\sigma|t|}.
\end{equation*}
and we used that $\J[h_0]=0$.
Using \eqref{estimate A_M} we can estimate term by term the error \eqref{S(U1)}. For instance 
\begin{align*}
	\abs{\varepsilon^{2}(\Delta_{M}h_{1}^{1}+|A_{M}|^{2}h_{1}^{1})\mathsf{V}_{1}}&\leq C\eps^2\|D^2 h_1\|_{C^{0,\gamma}_4(M)}\|e^{\sigma|t|}\sdf{V}_1\|_{L^{\infty}(\R^2)}r^{-4}e^{-\sigma|t|}\\
	&\ +C\eps^2\|h_1\|_{\infty}\|e^{\sigma|t|}\sdf{V}_1\|_{L^\infty(\R^2)}r^{-4}e^{-\sigma|t|}
\end{align*}
or in other words 
\begin{equation*}
	\norm{\varepsilon^{2}(\Delta_{M}h_{1}^{1}+|A_{M}|^{2}h_{1}^{1})\mathsf{V}_{1}}_{C^{0,\gamma}_{4,\sigma}(M\times\R^2)}\leq C\eps^3.
\end{equation*}
Similar calculation can be carried out for all the other terms in the expression of $S(W_1)$. In all we get
\begin{equation}
	\norm{S(W_1)}_{C^{0,\gamma}_{4,\sigma}(M\times\R^2)}\leq C\eps^3
\end{equation}
Now that we know precisely the size of all the terms involved in the error \eqref{S(U1)}, all that is left to obtain an actual approximation is to extend it beyond the support of the Fermi coordinates. 

\subsection{The global approximation}\label{firstglobapp}
The approximation obtained so far is sufficient for our purposes when considered in a neighbourhood of $M$. Next step is to extend $W_1$ beyond the support of the Fermi coordinates in a way that keeps the error $S$ small in a norm that accounts decay along the ends.
We begin by considering an extension of the set $\N$ previously defined
where the Fermi coordinates are still well-defined and the error of approximation maintains the same size. Let 
\begin{equation*}
	\mathcal{O}'=\braces{(y,t)\in M\times\R^2\ :\ |t+h_1(y)|<\varrho(y)/\eps}
\end{equation*}
where 
\begin{equation*}
	\varrho(y)\coloneqq \delta+\frac{4\eps}\sigma \log(1+r(y))
\end{equation*}

and $\delta$ is a small positive number. Remark that $\eps|t+h_1|=|z-\eps h_0|$ where $z$ is the normal coordinate to the manifold. This means that setting
\begin{equation}
    \N'=X_{h}(\mathcal{O}'),
\end{equation}
where $X_{h}$ is given by \eqref{X_epsh}, then $\N'$ describes a tubular (expanding) neighbourhood centred on the $h_0$-shifted manifold
\begin{equation}\label{M_zero}
	M_0\coloneqq \braces{y+\eps h^1_0(y)\nu(y)\ :\ y\in M}
\end{equation}
but expanding quicker than $\N$ along the ends. Observe that $X_{h}$ is one-to-one in $\mathcal{O}'$ thanks to assumption \eqref{divendcond} and the fact that $h_1=O(\eps)$, which means that Fermi coordinates are still well defined in $\N'$. 

\medskip

Let now $\zeta$ be a smooth cut-off function such that $\zeta(s)=1$ if $s<1$ and $\zeta(s)=0$ if $s>2$. For $\delta>0$, we define 
\begin{equation}
	\zeta_\delta(x)\coloneqq 
	\begin{cases}
		\zeta(|t+h_1(y)|-\varrho(y)/\eps-3)&\text{if }x=X_{h}(y,t)\in\mathcal{N}'\\
		0&\text{otherwise.}
	\end{cases}
\end{equation}
We will define the approximate global solution as
\begin{equation*}
	W\coloneqq \zeta_\delta W_1+(1-\zeta_\delta)\boldsymbol{\Psi}
\end{equation*}
where $\boldsymbol{\Psi}=(\psi,d\psi/i\psi)$ is a pure gauge of the form \eqref{pure gauges v2}, for some $\mathbb{S}^1$-valued function $\psi$ which is smooth in the support of $(1-\zeta_\delta)$. This ensures that away form the manifold $W$ is a solution, precisely
\begin{equation}\label{Psi pure gauge}
	S(W)=S(\boldsymbol{\Psi})=0\quad\text{on }\ \{\zeta_\delta = 0\}
\end{equation}
independently on the choice of $\psi$, as long as it is regular enough.
On the other hand, we need to choose the function $\psi$ such that $\boldsymbol{\Psi}$ glues well with $W_1$ on the set $\{0<\zeta_\delta<1\}$, to avoid the creation of a large error. Let us recall that 
\begin{equation*}
	W_{1}(y,t)=\begin{pmatrix}f(\rho)e^{i\theta}\\
a(\rho)d\theta
\end{pmatrix}+O(\eps^{2}r(y)^{-4}e^{-\sigma|t|})
\end{equation*}
where $(\rho,\theta)$ are the polar coordinates relative to $(t^1,t^2)$. Also, remark that as $\rho=|t|$ grows large it holds
\begin{equation}\label{estim1-f1-a}
	1-f(\rho)=O(e^{-\rho}),\quad 1-a(\rho)=O(e^{-\rho})
\end{equation} 
therefore it looks natural to choose $\psi=e^{i\theta}$ (and consequently $d\psi/i\psi=d\theta$) in $\N'\setminus M_0$. In such case it holds 
\begin{equation*}
	|W_1(t)-\boldsymbol{\Psi}(t)|=|1-f(|t|)|+|t|\cdot|1-a(|t|)|+O(\eps^{2}r(y)^{-4}e^{-\sigma|t|})\quad \text{on }\ \{0<\zeta_\delta <1\}.
\end{equation*}
Now, the points in $\{0<\zeta_\delta <1\}$ are of the form $X_{h}(y,t)$, where $(y,t)$ satisfies
\begin{equation*}
	|t|>\frac{\delta}{\eps}+\frac{4}{\sigma}\log r(y)
\end{equation*}
for $\eps$ sufficiently small. Therefore

\begin{equation}\label{expest}
	e^{-\sigma|t|}\leq e^{-\frac{\sigma\delta}{\eps}+\log r^{-4}}\leq Cr^{-4}e^{-\frac{\delta\sigma}{\eps}}\quad\text{on }\ \{0<\zeta_\delta <1\}
\end{equation}
and hence exponentially decaying terms in $|t|$ gain extra decay in $r$ and exponential smallness in $\eps$ when considered in $\{0<\zeta_\delta <1\}$. We claim that the interpolation error term is exactly of this type. A lengthy but straightforward calculation gives 
\begin{equation*}
	S(W)=\zeta_\delta S(W_1)+(1-\zeta_\delta)S(\boldsymbol{\Psi})+\mathrm{E}
\end{equation*}
where, if we denote $W_1=(u_1,A_1)$ and $\mathrm{E}=(e,E)$, it holds 
\begin{align*}
	e&=i\eps^2\zeta_{\delta}(1-\zeta_{\delta})d^{*}(A_{1}-d\theta)(u_{1}-e^{i\theta})+i\eps^2d\zeta_{\delta}\cdot(A_{1}-d\theta)(\zeta_{\delta}u_{1}+(1-\zeta_{\delta})e^{i\theta})\\
	&-2i\eps^2\zeta_{\delta}(1-\zeta_{\delta})(A_{1}-d\theta)\cdot d(u_{1}-e^{i\theta})+\eps^2\zeta_{\delta}(1-\zeta_{\delta})^{2}|(A-d\theta)|^{2}u_{1}\\
	&+\eps^2(1-\zeta_{\delta})\zeta_{\delta}^{2}|(A_{1}-d\theta)|^{2}e^{i\theta}-2\eps^2\zeta_{\delta}(1-\zeta_{\delta})d\theta\cdot(A_{1}-d\theta)e^{i\theta}\\
	&+2\eps^2\zeta_{\delta}(1-\zeta_{\delta})A_{1}\cdot(A_{1}-d\theta)u_{1}-\eps^2(u_{1}-e^{i\theta})\Delta\zeta_{\delta}\\
	&-2\eps^2\nabla^{A_{1}}u_{1}\cdot d\zeta_{\delta}-2i\eps^2((1-\zeta_{\delta})u_{1}+\eps^2\zeta_{\delta}e^{i\theta})(A_{1}-d\theta)\cdot d\zeta_{\delta}\\
	&+\tfrac{1}{2}\eps^2\zeta_{\delta}(1-\zeta_{\delta})((1-\zeta_{\delta})|u_{1}-e^{i\theta}|^{2}+2\langle u_{1},u_{1}-e^{i\theta}\rangle)u\\
	&+\tfrac{1}{2}\eps^2\zeta_{\delta}(1-\zeta_{\delta})(\zeta_{\delta}|u_{1}-e^{i\theta}|^{2}-2\langle v,u_{1}-e^{i\theta}\rangle)e^{i\theta}
\end{align*}
and 
\begin{align*}
	E&=\eps^2d^{*}\left(d\zeta_{\delta}\wedge(A_{1}-d\theta)\right)-\eps^2d\zeta_{\delta}\cdot d(A_{1}-d\theta)+\eps^2\zeta_{\delta}(1-\zeta_{\delta})\left\langle \nabla^{A_{1}}u_{1},iu_{1}\right\rangle \\
	&-\eps^2(1-\zeta_{\delta})d\zeta_{\delta}\langle u_{1}-e^{i\theta},iv\rangle -\eps^2\zeta_{\delta}d\zeta_{\delta}\langle u_{1}-e^{i\theta},iu_{1}\rangle \\
	&-\eps^2\zeta_{\delta}(1-\zeta_{\delta})\langle \nabla^{A_{1}}u_{1},ie^{i\theta}\rangle -\eps^2\zeta_{\delta}(1-\zeta_{\delta})(A_{1}-d\theta)\langle u_{1},e^{i\theta}\rangle.
\end{align*}
As a general estimate, we can write 
\begin{equation*}
	|\mathrm{E}|\leq C\eps^2 \chi_{\{0<\zeta_\delta<1\}}\round{1+|W_1|}^2\round{\abs{W_1-\boldsymbol{\Psi}}+\abs{D(W_1-\boldsymbol{\Psi})}+|\nabla^{A_1}u_1|}.
\end{equation*}
Using \eqref{estim1-f1-a}, the fact that $f',a'=O(e^{-\rho})$ and that 
\begin{equation*}
	|\nabla^{A_1}u_1(t)|=2f'(|t|)+O(\eps^{2}r(y)^{-4}e^{-\sigma|t|})
\end{equation*}
we obtain
\begin{align*}
	|\mathrm{E}|&\leq C\eps^2\chi_{\{0<\zeta_\delta<1\}} e^{-\sigma|t|}\leq Cr^{-4}e^{-\frac{\delta\sigma}{\eps}}\quad\text{on }\ \{0<\zeta_\delta <1\}
\end{align*}
where we used \eqref{expest}. Hence, the interpolation error has the required smallness. 

\medskip

Finally, we define the pure gauge $\boldsymbol{\Psi}$ in the following way: consider the signed distance function from $M_0$, where $M_0$ is given by \eqref{M_zero}, given by 
\begin{equation*}
	\bar{\mathtt{d}}:\{\zeta_\delta>0\}\cap \R^3\to \R,\quad \bar{\mathtt{d}}(x')\coloneqq s(x')\mathrm{dist}(x',M_0)
\end{equation*}
where $s(x')=+1$ if $x'$ is in the region where $\nu$ points, and $-1$ otherwise. Let now $\mathtt{d}:\R^3\setminus M_0\to \R$ be any smooth extension of $\bar{\mathtt{d}}$ to $\R^3\setminus M_0$. We set $\psi\colon\R^3\setminus M_0\to \mathbb{S}^1$ as
\begin{equation*}
	\psi(x',x_4)\coloneqq \frac{\mathtt{d}(x')+ix_4}{|\mathtt{d}(x')+ix_4|}
\end{equation*}
In this way, the pair $\boldsymbol{\Psi}=(\psi,d\psi/i\psi)$ satisfies our requirements in the sense that 
\begin{equation*}
	\boldsymbol{\Psi}\big\vert_{\{0<\zeta_\delta<1\}}=\twovec{e^{i\theta}}{d\theta}
\end{equation*}
and it extends smoothly on $\{\zeta_\delta=0\}$.

\section{Proof of main result}\label{proofmainres}
In the previous section we built an approximate solution $W$ of the form
\begin{equation*}
	W=\zeta_\delta W_1+(1-\zeta_\delta)\boldsymbol{\Psi}.
\end{equation*}
We now look for a solution of the form $W+\vphi$, where $\vphi$ will be small in a suitable topology. In other words, we want to solve
\begin{equation}\label{pertubed_equation_S(W+Ups)=0}
	S(W+\vphi)=0.
\end{equation} 
We can rephrase \eqref{pertubed_equation_S(W+Ups)=0}, using \eqref{decomposition_of_L_cmp}, as 
\begin{equation}\label{projected_equation_compact}
	S(W+\vphi)=-\Theta_W\Theta_W^*[\vphi]+L_W[\vphi]+S(W)+N(\vphi)=0
\end{equation}
where
\begin{equation*}
	N(\vphi)=S(W+\vphi)-S(W)-S'(W)[\vphi].
\end{equation*} 
As explained in Section \ref{outline}, we will solve \eqref{projected_equation_compact} by first finding a solution $\vphi$ to 
\begin{equation}\label{lin - uncorrected}
	L_W[\vphi]+S(W)+N(\vphi)=0,
\end{equation}
which we will be able to do up to corrections (see Proposition \ref{invertibility L_W} below), and then showing that all corrections (including $-\Theta_W\Theta_W^*[\Phi]$) vanish thanks to the symmetries of the solution found. 

Rather than \eqref{lin - uncorrected}, we will solve the more general problem 
\begin{equation}\label{Perturb eq not inverted - corrected}
	L_W[\vphi]+S(W)+N(\vphi)=\zeta_2(y,t)b^\alpha(y)\sdf{V}_\alpha(t).
\end{equation}
where $\zeta_2$ is a cut-off function supported close to $M$ (defined below in \eqref{zeta_m}), $\sdf{V}_\alpha$ are as in \eqref{expression sdfVj} and $b^\alpha$ are functions defined on $M$ which are unknowns of the problem.
This correction is necessary to obtain good estimates for the solution. The principal operator in \eqref{Perturb eq not inverted - corrected}, $L_W$, has an approximate kernel given by $\sdf{V}_\alpha(t)$, $\alpha=1,2$, suitably cut-off away from the manifold $M$. The correction $\zeta_2b^\alpha(y)\sdf{V}_\alpha(t)$ has the role of ensuring that the right-hand side of the equation satisfies an orthogonality condition to the approximate kernel, yielding the \emph{a priori} estimates. 

\medskip

We solve \eqref{Perturb eq not inverted - corrected} by first proving the following result for its linear version. The adjustment on the right-hand side provides unique solvability in terms of $\vphi$ and $b=(b^1,b^2)$. In the sense of the following Proposition, which will be proved in Section \ref{Proof of prop inv LW}.
\begin{proposition}\label{invertibility L_W}
	Let $\mu\geq 0$ and $\gamma\in(0,1)$. For any $\Lambda\in C^{0,\gamma}_{\mu}(\R^4)$ there exists $b\in C^{0,\gamma}_\mu(M)$ and a unique solution $\vphi=\mathcal{G}(\Lambda)$ to 
	\begin{equation}\label{L_W equation}
		L_W[\vphi]=\Lambda+\zeta_2b^\alpha(y)\sdf{V}_\alpha(t)
	\end{equation}
	satisfying 
	\begin{equation*}
		\norm{\vphi}_{C^{2,\gamma}_{\mu}(\R^4)}+\norm{b}_{C^{0,\gamma}_\mu(M)}\leq C\norm{\Lambda}_{C^{0,\gamma}_{\mu}(\R^4)}
	\end{equation*}
	for some $C>0$.
\end{proposition}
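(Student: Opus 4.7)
The plan is a Lyapunov-Schmidt reduction relative to the two-dimensional cokernel of the fiberwise limit $\sdf{L}$. For each $y\in M_\eps$ the Lagrange multipliers $b^\alpha(\eps y)$ are defined so that, restricted to the normal fiber through $y$, the right-hand side of \eqref{L_W equation} is $L^2(\R^2)$-orthogonal to $\sdf{V}_1,\sdf{V}_2$; explicitly,
\begin{equation*}
b^\alpha(\eps y) \,=\, -\frac{\int_{\R^2} \Lambda(X_{\eps,h}(y,t))\cdot \sdf{V}_\alpha(t)\, dt}{\int_{\R^2} \zeta_2(y,t) |\sdf{V}_\alpha(t)|^2\, dt}.
\end{equation*}
The exponential decay of $\sdf{V}_\alpha$ and differentiation under the integral give $\|b\|_{C^{0,\gamma}_\mu(M)}\leq C\|\Lambda\|_{C^{0,\gamma}_\mu(\R^4)}$.

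Next I would prove the a priori bound $\|\vphi\|_{C^{2,\gamma}_\mu(\R^4)}\leq C\|\Lambda\|_{C^{0,\gamma}_\mu(\R^4)}$ by contradiction. Assume sequences $(\eps_n,\Lambda_n,b_n,\vphi_n)$ with $\|\vphi_n\|_{C^{2,\gamma}_\mu}=1$ and $\|\Lambda_n\|_{C^{0,\gamma}_\mu}\to 0$, and pick $x_n$ where the weighted norm is essentially attained. If $x_n$ lies in the Fermi neighbourhood at bounded distance from $M$, write $x_n=X_{\eps_n,h}(y_n,t_n)$, translate along $M_{\eps_n}$ to bring $y_n$ to a base point, and use the expansion $L_W=\sdf{L}+O(\eps_n)$ in these coordinates to extract a bounded limit $\vphi_\infty$ on $\R^2$ solving $\sdf{L}[\vphi_\infty]=0$ with $\vphi_\infty\perp\sdf{V}_1,\sdf{V}_2$ (by the choice of $b_n$); the coercivity estimate \eqref{coercivity} together with Lemma \ref{invertibility lstr} forces $\vphi_\infty\equiv 0$, a contradiction. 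If $x_n$ escapes along an end of $M$ while staying close to it, the ends are asymptotically planar so the same argument applies. If $\mathrm{dist}(x_n,M)\to\infty$, the coupling $T_W$ vanishes in the limit and the rescaled $\vphi_n$ converges to a bounded solution of $(-\Delta+\mathrm{Id})\vphi_\infty=0$ on $\R^4$, which must be zero by Liouville. Dividing $\vphi_n$ by the weight $(1+|x_n|)^{-\mu}$ before passage to the limit keeps the normalization intact.

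With the a priori bound in hand, existence follows by a standard approximation argument: solve \eqref{L_W equation} in balls $B_R$ with zero Dirichlet data via the Fredholm alternative applied to the compact perturbation $L_W-(-\Delta+\mathrm{Id})$, and then let $R\to\infty$ using the uniform estimate. Uniqueness of the pair $(\vphi,b)$ is obtained from the a priori estimate applied to the difference of any two solutions.

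The hardest step should be the fiberwise blow-up analysis in the intermediate region where the global approximation transitions from the 2-D vortex to the pure gauge $\boldsymbol{\Psi}$ in the glueing annulus $\{0<\zeta_\delta<1\}$: one must verify uniform ellipticity of $L_W$ and, crucially, that the limiting problem on $\R^2$ still inherits the orthogonality $\vphi_\infty\perp\sdf{V}_1,\sdf{V}_2$ so that its only bounded solution is zero. The potential obstruction from the infinite-dimensional gauge kernel $Z_{W,g}$ is decoupled from the argument by Remark \ref{self-map}: on $Z_{W,g}^\perp$ the operator $L_W$ agrees with $S'(W)$, while on $Z_{W,g}$ it reduces to the scalar problem $-\Theta_W\Theta_W^*$, which is invertible separately via the positivity of $-\Delta+|u|^2$.
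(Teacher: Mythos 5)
There is a genuine gap at the heart of your a priori estimate. You define $b^\alpha$ so that the \emph{data} $\Lambda+\zeta_2 b^\alpha\sdf{V}_\alpha$ is fiberwise orthogonal to $\sdf{V}_1,\sdf{V}_2$, and then claim the blow-up limit $\vphi_\infty$ inherits the orthogonality $\vphi_\infty\perp\sdf{V}_\alpha$ "by the choice of $b_n$". Orthogonality of the right-hand side does not transfer to the solution of the finite-$\eps$ problem. The operator $L_W$ has a genuine approximate kernel: for any slowly varying $c^\alpha$, one has $L_W[\zeta_2\, c^\alpha(\eps y)\sdf{V}_\alpha(t)]=O(\eps)$ in the weighted norms (the leading contribution is $\eps^2$ times the Jacobi operator acting on $c$, plus curvature corrections), so the pair $(\vphi,\Lambda)=(\zeta_2 c^\alpha\sdf{V}_\alpha,\,O(\eps))$ violates the uniform bound $\|\vphi\|\leq C\|\Lambda\|$ and destroys uniqueness. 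Concretely, in your contradiction argument the limit problem near the manifold is $\sdf{L}[\vphi_\infty]-\Delta_y\vphi_\infty=0$ on $\R^2_y\times\R^2_t$ (note: on $\R^4$, not on $\R^2$ as you wrote — the tangential variable survives the blow-up), and this equation has the nontrivial bounded solutions $c^\alpha\sdf{V}_\alpha(t)$ with $c$ constant; nothing in your setup rules them out. The fix, which is what the paper does, is to impose the fiberwise orthogonality $\int_{\R^2}\Phi(y,t)\cdot\sdf{V}_\alpha(t)\,dt=0$ on the \emph{inner part of the solution} as part of the problem (see \eqref{corrected projection - bullet}); $b$ is then the Lagrange multiplier making the constrained problem solvable, and consequently it depends on the coupling terms generated by $\vphi$ itself (the $\tilde{\sdf{B}}[\Phi]$ and $T_W\Psi$ contributions), not merely on the fiberwise projection of $\Lambda$ as in your explicit formula.

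Two further points. First, even with the orthogonality constraint added, your limit problem requires a Liouville theorem for \emph{bounded} (not $L^2$) solutions of $\sdf{L}-\Delta_y$ on $\R^4$ orthogonal to the $\sdf{V}_\alpha$ fiberwise; the coercivity \eqref{coercivity} is an $L^2$ statement, so you need an intermediate step (a logarithmic cut-off in $y$ or an $L^\infty$-to-$L^2$ localization as in \S\ref{Linfty estimates subsub}) to apply it. The paper sidesteps this entirely by constructing the inverse directly: an inner--outer decomposition $\vphi=\zeta_2\Phi+\Psi$, a positive-operator argument for the outer piece, and for the inner piece a Fourier transform in the tangential variable reducing to Lax--Milgram for $\sdf{L}+|\xi|^2$ on $Z_{U_0,t}^\perp$, followed by a partition-of-unity localization on $M_\eps$. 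Second, your closing paragraph about splitting $L_W$ along $Z_{W,g}\oplus Z_{W,g}^\perp$ is unnecessary and somewhat misleading here: the whole point of replacing $S'(W)$ by $L_W$ is that $L_W=-\Delta_W+\mathrm{Id}+T_W$ is a uniformly elliptic system with no gauge degeneracy, so its invertibility is established directly, with no case distinction on the gauge kernel.
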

Using Proposition \ref{invertibility L_W} we can write \eqref{Perturb eq not inverted - corrected} as a fixed point problem 
\begin{equation*}
	\vphi=-\mathcal{G}\round{S(W)+N(\vphi)}\eqqcolon\mathcal{P}[\vphi]
\end{equation*}
on the space
\begin{equation}\label{space for Upsilon}
	X_A=\braces{\vphi\in C^{2,\gamma}_{4}(\R^4)\ :\ \norm{\vphi}_{C^{2,\gamma}_{4}(\R^4)}\leq A\eps^3}
\end{equation}
Indeed, if $\vphi_1,\vphi_2\in X_A$ (see \eqref{N lip claim 1} below),
\begin{align*}
	\norm{N(\vphi_1)-N(\vphi_2)}_{C^{0,\gamma}_{4}(\R^4)}\leq C\eps^3\norm{\vphi_1-\vphi_2}_{C^{0,\gamma}_{4}(\R^4)}
\end{align*}
which implies
\begin{align*}
	\norm{\mathcal{P}[\vphi_1]-\mathcal{P}[\vphi_2]}_{C^{2,\gamma}_{4}(\R^4)}&\leq C \norm{N(\vphi_1)-N(\vphi_2)}_{C^{0,\gamma}_{4}(\R^4)}\leq C\eps^3\norm{\vphi_1-\vphi_2}_{C^{0,\gamma}_{4}(\R^4)}.
\end{align*}
Hence, up to enlarging $A$ enough, by contraction mapping principle we can find an unique $\vphi\in C^{2,\gamma}_{4}(\R^4)$ and with 
\begin{equation}\label{estimate vphi}
	\norm{\vphi}_{C^{2,\gamma}_{4}(\R^4)}\leq A\eps^3
\end{equation}
such that $W+\vphi$ is a solution of the corrected problem \eqref{Perturb eq not inverted - corrected}. 
An important fact about such $\vphi$ that will be proved in Section \ref{Proof_left} is its Lipshitz dependence on $h_1$, namely
\begin{equation*}
		\|\vphi(h_1)-\vphi(h_2)\|_{C_{4}^{2,\gamma}(\mathbb{R}^{4})}\leq C\eps^2\|h_1-h_2\|_*.
\end{equation*}  
It only remains to make this correction vanish, and to do so we will adjust the parameter $h_1$.

\section{Adjusting $h_1$ to make the projection vanish}\label{adj h1}

In this section we prove that there is a suitable choice of the function $h_1$ such that the quantities $b^\alpha(h_1)$ in \eqref{Perturb eq not inverted - corrected} vanish, up to a further correction term accounting for the degeneracies of the Jacobi operator. 
So far we have solved 
\begin{equation}\label{already solved}
	L_W[\vphi]+S(W)+N(\vphi)=\zeta_2b^\gamma(y)\sdf{V}_\gamma(t)\quad\text{on }\R^4
\end{equation}
Where $\zeta_2$ is defined as follows: let $\zeta$ be a cut-off function such that $\zeta(s)=1$ if $s<1$ and $\zeta(s)=0$ if $s>2$. For every positive integer $m$, define
\begin{equation}\label{zeta_m}
	\zeta_m(x)=
	\begin{cases}
		\zeta(|t+h_1(y)|-\delta/\eps-m)&\text{if }x=X_{h}(y,t)\in\mathcal{N},\\
		0&\text{otherwise}.
	\end{cases}
\end{equation} 
If we multiply \eqref{already solved} by $\zeta_4(y,t)\sdf{V}_\alpha(t)$ and integrate on $\R^2$, we find an expression for $b^\alpha(y)$,
\begin{equation}\label{ba}
	b^\alpha(y)=\frac{1}{\int_{\R^2}\zeta_2(y,t)|\sdf{V}_\alpha(t)|^2dt}\int_{\R^2}\zeta_4(y,t)\squared{S(W)+N(\vphi)+L_W[\vphi]}\cdot\sdf{V}_\alpha(t)dt.
\end{equation}
Now we recall that inside the support of $\zeta_4$ the quantity $S(W)$ equals the local error of approximation $S(W_1)$, which has the expression \eqref{S(U1)}. We define 
\begin{equation*}
	q_{m}(y)=\int_{\R^2}\zeta_m(y,t)|\sdf{V}_\alpha(t)|^2dt,\quad m=1,2,\dots
\end{equation*}
which is independent of $\alpha\in\{1,2\}$ and satisfies $q_m(y)\to \|\sdf{V}_\alpha\|_{L^2}^2$ as $\eps\to 0$. Then
\begin{align*}
	\int_{\R^2}\squared{\zeta_4S(W_1)}\cdot\sdf{V}_1(t)dt&=\eps^2q_{4}(\Delta_Mh_1^1+|\A_M|^2h_1^1)-\eps^2G^1_1(h_1),\\
	\int_{\R^2}\squared{\zeta_4S(W_1)}\cdot\sdf{V}_2(t)dt&=\eps^2q_{4}(\Delta_Mh_1^2)-\eps^2G^1_2(h_1).
\end{align*}
We also split the remaining operator in the following way
\begin{align*}
	G^2_\alpha(h_1)&\coloneqq -\frac{1}{\eps^2}\int_{\R^2}\zeta_4\squared{N(\vphi)+\sdf{B}[\vphi]}\cdot\sdf{V}_\alpha(t)dt,\\
	G^3_\alpha(h_1)&\coloneqq -\frac{1}{\eps^2}\int_{\R^2}\zeta_4L_{U_0}[\vphi]\cdot\sdf{V}_\alpha(t)dt,
\end{align*}
where $\sdf{B}$ is defined (inside the support of $\zeta_4$) as 
\begin{equation*}
	\sdf{B}[\Phi]\coloneqq \round{L_W-L_{U_0}}[\Phi].
\end{equation*}
Also, we set
\begin{equation}\label{G}
	G=\twovec{G_1}{G_2}=q_{4}^{-1}\sum_{k=1}^3\twovec{G^k_1}{G^k_2}.
\end{equation}
With this notation, the system $b^1=b^2=0$ can be rephrased as a fixed point problem 
\begin{equation}\label{fixed point jacobi}
	\J[h_1]=G(h_1)
\end{equation}
where $\J$ is the Jacobi operator \eqref{jacobi in codim 2} and as we will see $G$ is small and satisfies a suitable Lipschitz property (see Lemma \ref{Lipsh} below). Equation \eqref{fixed point jacobi} reads explicitly 
\begin{equation*}
	\begin{cases}
		\Delta_Mh_1^1+|\A_M|^2h_1^1=G_1(h_1),\\
		\Delta_Mh_1^2=G_2(h_1).
	\end{cases}
\end{equation*}
We recall that the assumption of non-degeneracy of the manifold $M$ implies that all bounded Jacobi fields are linear combinations of those generated by the rigid motions of the manifold. In our case the Jacobi operator is decoupled, this gives 5 independent Jacobi fields, 4 of which come from those of the immersion into $\R^3$, namely
\begin{align*}
	\z_j=\twovec{z_j}{0},\quad j=0,1,2,3.
\end{align*}
\begin{gather*}
	z_i(y)=\nu_1(y)\cdot e_i\quad i=1,2,3,\\
	z_0(y)=\nu_1(y)\cdot\round{-y_2,y_1,0,0}.
\end{gather*}
while the immersion in $\R^4$ produces 
\begin{equation*}
	\z_4=\twovec{0}{z_4},\quad z_4(y)=\nu_2(y)\cdot e_4=1
\end{equation*}
which accounts for the constants in the kernel of $\Delta_M$. In what follows we consider linear combinations
\begin{equation*}
	\hat{\z}_j=\sum_{i=0}^4 d_{ij}\z_i,\quad j=0,\dots,4,
\end{equation*} 
such that 
\begin{equation*}
	\int_M|\A_M|^2\hat{\z}_i\hat{\z}_j=\delta_{ij},\quad i,j=0,\dots,4.
\end{equation*}
The existence of a non-trivial kernel requires the presence of a correction in an invertibility theory for the Jacobi operator \eqref{fixed point jacobi}, of the form 
\begin{equation}\label{corrected jacobi system}
	\J(h_1)=G(h_1)-\sum_{j=0}^4c^j|\A_M|^2\hat{\z}_j.
\end{equation}
Indeed, a suitable choice of the $c^j$'s ensures that the right-hand side is orthogonal to the kernel, namely 
\begin{equation*}
	\int_M\Bigg(G(h_1)-\sum_{j=0}^4c^j|\A_M|^2\hat{\z}_j\Bigg)\cdot\z^k=0,\quad k=0,\dots,4,
\end{equation*}
allowing for an invertibility theory that carries \emph{a priori} estimates in the sense of the following result, the proof of which is postponed to Section \ref{invertjacob}.
\begin{proposition}\label{invertibility Jacobi}
	Let $f=(f^1,f^2)$ be a vector-valued function defined on $M$ such that $\|f\|_{C^{0,\gamma}_{4}(M)}<+\infty$.
Then there exist constants $c^0,\dots,c^4$ such that system \eqref{corrected jacobi system}, namely
	\begin{equation*}
		\begin{cases}
		\Delta_Mh^1+|\A_M|^2h^1=f^1-\sum_{j=0}^3c^j|\A_M|^2\hat{z}_j,\\
		\Delta_Mh^2=f^2-c^4|\A_M|^2,
	\end{cases}
	\end{equation*}
admits a solution $h=(h^1,h^2)=\mathcal{H}(f)$ satisfying 
\begin{equation*}
	\|h\|_*\leq C\|f\|_{C^{0,\gamma}_{4}(M)}.
\end{equation*}
\end{proposition}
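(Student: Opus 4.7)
The plan is to exploit the block-diagonal structure of the system: the first equation involves only $h^1$ and is the Jacobi equation $J_M[h^1] = \Delta_M h^1 + |\A_M|^2 h^1 = \cdot$, while the second involves only $h^2$ and is the pure Laplace equation $\Delta_M h^2 = \cdot$. I would treat the two equations separately and combine the estimates at the end.

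For the first equation, the non-degeneracy assumption \eqref{nondeg} says that the bounded kernel of $J_M$ is spanned by $z_0, z_1, z_2, z_3$, and by construction the combinations $\hat{z}_0, \dots, \hat{z}_3$ form an orthonormal basis of this kernel in the weighted inner product $\langle u, v\rangle_\A := \int_M |\A_M|^2\, u v\, dV_M$. Setting
\[
c^j = \int_M f^1\, \hat{z}_j\, dV_M, \qquad j=0,1,2,3,
\]
(which is well-defined since the $\hat z_j$ are bounded and $f^1 = O(r^{-4})$ is integrable on the 2-dimensional surface $M$) arranges that the corrected right-hand side $f^1 - \sum_{j=0}^3 c^j |\A_M|^2 \hat{z}_j$ is $L^2$-orthogonal to $\ker J_M$. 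Existence of an $h^1$, unique modulo the kernel, with the quantitative bound $\|h^1\|_\infty + \|D_M h^1\|_{C^{0,\gamma}_2(M)} + \|D^2_M h^1\|_{C^{0,\gamma}_4(M)} \leq C\|f^1\|_{C^{0,\gamma}_4(M)}$, would then follow from the Fredholm-type linear theory for the Jacobi operator on embedded, non-degenerate minimal surfaces of finite total curvature developed by del Pino, Kowalczyk and Wei in \cite{del2013entire}, which applies verbatim here.

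For the second equation, $\Delta_M$ acting on bounded functions on $M$ has a one-dimensional kernel consisting of the constants (the Jacobi field $z_4 \equiv 1$). I would choose
\[
c^4 = \frac{\int_M f^2\, dV_M}{\int_M |\A_M|^2\, dV_M},
\]
so that $g := f^2 - c^4 |\A_M|^2$ satisfies $\int_M g\, dV_M = 0$; the denominator is finite and strictly positive by finite total curvature (the planar case is trivial, and may be excluded). The equation $\Delta_M h^2 = g$ can then be solved with bounded $h^2$: on a 2-dimensional surface with asymptotically Euclidean ends, cf.\ \eqref{laplacian on Mk}, the Green's representation for $h^2$ produces a $(2\pi)^{-1}\log|y'|\int_M g$ contribution on each end, which vanishes precisely because $\int g = 0$. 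The Hölder bounds on $D_M h^2$ and $D^2_M h^2$ encoded in $\|\cdot\|_*$ follow from the same weighted linear theory of \cite{del2013entire}, applied to the simpler operator without the $|\A_M|^2$ zeroth-order term.

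Combining the two estimates and noting that $|c^j| \leq C\|f\|_{C^{0,\gamma}_4(M)}$ for $j=0,\dots,4$ yields the bound $\|h\|_* \leq C\|f\|_{C^{0,\gamma}_4(M)}$. The main obstacle is not in the algebraic choice of correction constants but in building a bounded inverse of $J_M$ between the weighted Hölder spaces that define $\|\cdot\|_*$: one must carefully match the kernel elements $z_0,\dots,z_3$ with decay prescriptions on each end of $M$ (catenoidal or planar), which is precisely the delicate portion of \cite{del2013entire} and is invoked here as a black box.
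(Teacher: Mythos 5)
Your overall skeleton coincides with the paper's: decouple the two scalar equations, fix $c^0,\dots,c^3$ by $L^2$-orthogonality of the corrected right-hand side against the kernel $\{\hat z_j\}$ (your $c^j=\int_M f^1\hat z_j$ agrees with \eqref{cjs} once the normalization $\int_M|\A_M|^2\hat z_i\hat z_j=\delta_{ij}$ is used), fix $c^4$ by the mean-zero condition, and then invert each operator in the $\|\cdot\|_*$ topology. Where you diverge is in how the linear theory is obtained. For the Jacobi equation you invoke the weighted Fredholm theory of \cite{del2013entire} as a black box; this is defensible, since that reference treats the same operator on the same class of non-degenerate surfaces in essentially the same norms, but the paper instead gives a self-contained argument: it first inverts $\Delta_M$ variationally on the space $H=\{\int_M|\A_M|^2\psi=0\}$ using the coercivity $\int_M|\nabla\phi|^2\ge c\int_M|\A_M|^2|\phi|^2$ (proved by a compactness contradiction), upgrades to $L^\infty$ and weighted H\"older bounds via a Kelvin transform on each end (Lemma \ref{invertibility laplace}), and then writes the Jacobi equation as $h+\Delta_M^{-1}(|\A_M|^2h)=\Delta_M^{-1}(\cdots)$ and applies the Fredholm alternative to the compact operator $T=\Delta_M^{-1}(|\A_M|^2\,\cdot)$, using non-degeneracy only for injectivity (Lemma \ref{inversion jacobi}). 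The self-contained route buys independence from the precise function-space conventions of \cite{del2013entire}; your route is shorter if the citation is accepted.

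There is, however, a genuine gap in your treatment of the second equation. You claim that the Green's representation of $h^2$ produces on \emph{each} end a logarithmic term with coefficient $(2\pi)^{-1}\int_M g$, which then vanishes because $\int_M g=0$. That formula is the single-end statement. For a surface with $m\ge2$ ends (the relevant case: catenoid, Costa--Hoffman--Meeks, etc.), the coefficient of $\log r$ on the end $M_k$ is $\int_M\theta_k(y')\,g(y')\,dV(y')$, where $\theta_k(y')$ is the harmonic measure of the end $M_k$ seen from $y'$; only the \emph{sum} over $k$ of these coefficients equals $(2\pi)^{-1}\int_M g=0$, while the individual coefficients need not vanish. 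To repair this you would have to subtract a harmonic function with prescribed logarithmic growths $\mu_k\log r$ on $M_k$ subject to $\sum_k\mu_k=0$ — an object whose existence is itself a nontrivial lemma (the analogue, for $\Delta_M$, of Lemma \ref{behaviour h0}). The paper sidesteps this entirely: the Riesz-representation solution in $H$ is bounded by construction of the a priori estimate, and the Kelvin transform converts each end into an interior regularity problem on a disk, so no flux bookkeeping is needed. As written, your argument for $h^2$ does not establish boundedness, let alone the weighted $C^{2,\gamma}$ bounds in $\|\cdot\|_*$.
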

\begin{remark}
    It might happen, for instance in the case where $M$ is a catenoid, that the Jacobi field $\z_0$ associated to rotation invariance is 0. In this case the orthogonality condition is automatically satisfied and we do not need an extra correction term. 
\end{remark}
Proposition \ref{invertibility Jacobi} allows us to rewrite problem \eqref{corrected jacobi system} as a fixed point problem
\begin{equation}\label{fixed point for h1}
	h_1=\mathcal H(G(h_1)).
\end{equation}
In order to find such an $h_1$ it suffices to show that the right-hand side is a contraction mapping. This follows from the following Lemma, which will be proved in Section \ref{Proof_left}.
\begin{lemma}\label{Lipsh}
	The map $G$ satisfies 
	\begin{equation*}
		\|G(0)\|_{C^{0,\gamma}_{4}(M)}\leq C\eps
	\end{equation*} 
	and the Lipschitz condition
	\begin{equation*}
		\|G(h_1)-G(h_2)\|_{C^{0,\gamma}_{4}(M)}\leq C\eps\|h_1-h_2\|_*.
	\end{equation*}
\end{lemma}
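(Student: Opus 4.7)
The plan is to estimate $G^1(h_1)$, $G^2(h_1)$, $G^3(h_1)$ separately, showing each has size $\leq C\eps$ in $C^{0,\gamma}_4(M)$ and is Lipschitz in $h_1$ with constant $C\eps$; the first claim of the lemma then follows at $h_1=0$, the second by telescoping.

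For $G^1$, which collects what remains of $\int_{\R^2}\zeta_4 S(W_1)\cdot\sdf V_\alpha$ after subtracting the Jacobi principal part, I would read the surviving terms directly off \eqref{S(U1)}: the quadratic combinations $\eps^2 a^{ij}_0\partial_i h^\beta\partial_j h^\gamma \nabla_{\beta\gamma,U_0}U_0$, the $\eps^3$ corrections $(t+h)D_{1,\delta}h$, and the $\eps^4$ coupling terms through $\Lambda$. Using $|\A_M|^2=O(r^{-4})$, the asymptotic flatness bounds $a^{ij}_0=\delta^{ij}+O(r^{-2})$, $c^j_0=O(r^{-3})$ from \eqref{laplacian on Mk}, the assumption $\|h_1\|_*\leq C\eps$ of \S\ref{preciseassumh}, and the exponential $e^{-|t|}$ decay of $\sdf V_\alpha$, each surviving term is bounded pointwise by $C\eps^3 r_\eps^{-4}e^{-\sigma|t|}$. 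Integration in $t$ removes the exponential and division by the positive constant $\eps^2 q_4$ yields $\|G^1(h_1)\|_{C^{0,\gamma}_4(M)}\leq C\eps$; the H\"older part follows at once since the geometric coefficients are smooth functions of $\eps y$. For the Lipschitz bound in $h_1$, I would differentiate \eqref{S(U1)} explicitly in $h$ and $\nabla_M h$: every differentiated term still carries the full $\eps$ prefactor.

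For $G^2$ and $G^3$ I would invoke the bound $\|\vphi\|_{C^{2,\gamma}_4(\R^4)}\leq C\eps^3$ and its Lipschitz counterpart $\|\vphi(h_1)-\vphi(h_2)\|_{C^{2,\gamma}_4(\R^4)}\leq C\eps^2\|h_1-h_2\|_*$ asserted at the end of Section~\ref{proofmainres}. Since $N(\vphi)$ is quadratic in $\vphi$, $|N(\vphi)|\lesssim\eps^6 r_\eps^{-8}$; the operator $\sdf B=L_W-L_{U_0}$ has coefficients of order $\eps^2$ (from $\eps^2\Delta_{M_\eps}$ and the $O(\eps^2 D)$ remainder in \S\ref{section the first app}), hence $|\sdf B[\vphi]|\lesssim\eps^5 r_\eps^{-4}$; after integration and division by $\eps^2$ one obtains $\|G^2\|_{C^{0,\gamma}_4(M)}=O(\eps^2)$ with Lipschitz constant $O(\eps^4)$. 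For $G^3$, the self-adjointness of $\sdf L$ on $L^2(\R^2)$ together with $\sdf L[\sdf V_\alpha]=0$ gives
\begin{equation*}
\int_{\R^2}\zeta_4\,L_{U_0}[\vphi]\cdot\sdf V_\alpha=\int_{\R^2}\vphi\cdot[\sdf L,\zeta_4]\sdf V_\alpha,
\end{equation*}
and the commutator $[\sdf L,\zeta_4]\sdf V_\alpha$ is supported where $\nabla_t\zeta_4\ne 0$ (a bounded annulus in $t$) with exponentially decaying integrand, so the right-hand side is $O(\eps^3 r_\eps^{-4})$; dividing by $\eps^2$ yields $\|G^3\|_{C^{0,\gamma}_4(M)}\leq C\eps$ and Lipschitz constant $O(\eps^3)$ via the same identity applied to $\vphi(h_1)-\vphi(h_2)$.

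The main technical obstacle is propagating the $r_\eps^{-4}$ spatial decay cleanly through the weighted H\"older seminorm: one must check that $y$-derivatives of the geometric coefficients (which depend on $\eps y$) pick up no loss of decay, and that the commutator term retains its $|t|$-exponential decay after passing from Fermi to Cartesian coordinates so that the $r_\eps^{-4}$ bound on $\vphi$ can be used sharply. A related logical subtlety is that the Lipschitz estimate for $\vphi(h_1)$ must be proved independently, \emph{prior} to the present lemma, so that the fixed-point scheme \eqref{fixed point for h1} closes without circularity.
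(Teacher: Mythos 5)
Your proposal follows essentially the same route as the paper: term-by-term estimation of the surviving terms of $S(W_1)$ for $G^1$ using $|\A_M|^2=O(r^{-4})$ and the exponential decay of $\sdf V_\alpha$; the quadratic smallness of $N$, the smallness of $\sdf B=L_W-L_{U_0}$, and the Lipschitz dependence $\|\vphi(h_1)-\vphi(h_2)\|_{C^{2,\gamma}_4}\leq C\eps^2\|h_1-h_2\|_*$ (which the paper derives inside this very proof from Proposition \ref{invertibility L_W}, so no circularity arises) for $G^2$; and integration by parts against $\sdf V_\alpha\in\ker\sdf L$ for $G^3$. The only slip is describing $\mathrm{supp}\,\nabla_t\zeta_4$ as a bounded annulus — it sits at $|t|\sim\delta/\eps$, which is precisely why the paper gets the stronger bound $O(e^{-\delta\sigma/\eps})$ there — but this does not affect the conclusion.
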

By the contraction mapping principle, combining the estimates provided by Proposition \ref{invertibility Jacobi} and Lemma \ref{Lipsh}, \eqref{fixed point for h1} admits an unique solution in the space 
\begin{equation*}
	B_K=\braces{h\in C^0(M)\ :\ \|h\|_*\leq K\eps}
\end{equation*}
and hence we found a solution of the corrected problem \eqref{corrected jacobi system}. 

\medskip

In the next Section we will show that the above corrections, together with the one coming from gauge invariance, are actually zero.

\section{Conclusion of the proof of Theorem \ref{main theorem}}
      
So far we have constructed a solution $U=W+\vphi$ of 
\begin{equation*}
	S(U)=-\zeta_4b^\alpha(y)\sdf{V}_\alpha(t)-\Theta_W\Theta_W^*[\vphi]
\end{equation*}
where we managed to adjust the parameter $h$ to obtain 
\begin{equation*}        
	b^1=-\eps^2q_{2}^{-1}q_4\sum_{j=0}^3c^j|\A_M|^2\hat{z}_j,\quad b^2=-\eps^2q_{2}^{-1}q_4c^4|\A_M|^2.
\end{equation*}
Let us define $q=\eps^2\zeta_4q_{2}^{-1}q_4$ and set 
\begin{equation*}
	\gamma\coloneqq -\Theta_W^*[\vphi]
\end{equation*}
so that $U$ satisfies 
\begin{equation}\label{expression S(U) with corrections}
S(U)=q|\A_M|^2\sum_{j=0}^3c^j\hat{z}_j\sdf{V}_1(t)+q|\A_M|^2c^4\sdf{V}_2(t)+\Theta_W[\gamma].
\end{equation}
Our claim is that the coefficient vector $\boldsymbol{c}=(c^0,\dots,c^4)$ and the function $\gamma$ are automatically 0, which will make $U$ the sought solution and conclude the proof. Consider the quantities
\begin{align*}
	Z_i&=\nabla_{x_i,U}U,\quad i=1,2,3,4\\
	Z_0&=x_1\nabla_{x_2,U}U-x_2\nabla_{x_1,U}U.
\end{align*}
We claim that 
\begin{align}
	Z_j&=\sdf{V}_1z_j+O(\eps r^{-1}),\quad j=0,1,2,3\label{expansion Zj}\\
	Z_4&=\sdf{V}_2+O(\eps r^{-1}).\label{expansion Z4}
\end{align}
Indeed, recall that in a region close to the manifold the solution $U$ so found satisfies
\begin{equation*}
	U(x)=U_0(t)+\Phi(y,t)
\end{equation*}
for some function $\Phi$ satisfying in this region
\begin{equation*}
	|\Phi(y,t)|+|D\Phi(y,t)|\leq C\eps^2r(y)^{-4}e^{-\sigma|t|}
\end{equation*}
and where $x=y+\eps(t^\beta+h^\beta)\nu_\beta$, with $|t|$ sufficiently small. An explicit calculation yields to 
\begin{equation*}
	\nabla_UU=\sdf{V}_\alpha\cdot \eps\nabla t^\alpha+\sdf{R}
\end{equation*}
where $|\sdf{R}|\leq C\eps^2r(y)^{-4}e^{-\sigma|t|}$.
Since $\eps t^\alpha = z^\alpha -\eps h^\alpha(y)$ and $\nabla z^\alpha=\nu_\alpha$ it holds that 
\begin{equation*}
	\nabla t^\alpha = \nu_\alpha-\eps\nabla h^\alpha(y),
\end{equation*}
then, recalling that on each end $M_k$ it holds 
\begin{gather*}
	\nabla h^1=(-1)^k\frac{\eps\lambda_k}{r}\hat{r}+O(\eps r^{-2})\\
	\nabla h^2=O(\eps r^{-2})
\end{gather*}
we obtain the following local behaviour for $Z_i$, $i=1,\dots,4$,
\begin{align*}
	Z_i&=\nabla_{x_i,U}U\\
	&=\mathsf{V}_{\alpha}(t)\left(\nu_{\alpha}-\varepsilon^2\nabla h^{\alpha}\left(y\right)\right)\cdot e_i+\sdf{R}_i\\
	&=\mathsf{V}_{1}(t)\left(\nu_{1}\cdot e_{i}-\varepsilon^2\left(-1\right)^{k}\frac{\lambda_{k}}{r}\hat{r}\cdot e_{i}+O\left(\varepsilon r^{-2}\right)\right)\\
	&\ +\mathsf{V}_{2}(t)\left(\nu_{2}\cdot e_{i}+O\left(\varepsilon^2 r^{-2}\right)\right)+\sdf{R}_i.
\end{align*}
which proves \eqref{expansion Zj}-\eqref{expansion Z4}, using also that $Z_0=z_1Z_2-z_2Z_1.$ 

\medskip

The next Lemma follows from gauge invariance, the invariances of $M$ under rigid motions and from the balancing condition \eqref{balancing condition}. The proof is postponed to Section \ref{Proof_left}.
\begin{lemma}\label{orthogonality Z_i}
	It holds
	\begin{equation*}
		\int_{\R^4}S(U)\cdot\Theta_U[\gamma]=0\quad\text{and}\quad\int_{\R^4}S(U)\cdot Z_i=0
	\end{equation*}
	for $i=0,\dots,4$.
\end{lemma}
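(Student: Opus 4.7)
The identity comes from Noether-type relations produced by the invariance of the Ginzburg--Landau energy under isometries of $\R^4$, combined at the end with the balancing condition \eqref{balancing condition} to handle the non-decaying Jacobi fields at the ends of $M$.

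First, I would note that the energy $E$ is invariant under the translations $\sigma_s^j(x)=x+s\mathbf{e}_j$, $j=1,\ldots,4$, and under the rotation $R_s$ around the $x_3$-axis. For any smooth one-parameter group $\sigma_s$ of symmetries of $E$ with infinitesimal generator $X$, the chain rule gives
\begin{equation*}
 0 \,=\, \frac{d}{ds}\bigg|_{s=0} E(\sigma_s^{*}V) \,=\, \int_{\R^4} S(V)\cdot \mathcal{L}_X V\, dx
\end{equation*}
for every sufficiently regular pair $V=(v,B)$, with no criticality assumption needed. Applied to $V=U$, and observing from \eqref{expression S(U) with corrections} that $S(U)$ has essentially compact support, all integrals converge and we obtain
\begin{equation*}
\int_{\R^4} S(U)\cdot\mathcal{L}_{\mathbf{e}_j}U = 0\ (j=1,\ldots,4),\qquad
\int_{\R^4} S(U)\cdot\mathcal{L}_{x_1\partial_2-x_2\partial_1}U = 0.
\end{equation*}

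The next step is to match these integrals with $\int S(U)Z_i$ by expressing everything in Fermi coordinates around $M$ and decomposing each ambient generator into its normal and tangential components on $M$. Writing $\mathbf{e}_j = z_j\nu + \mathbf{e}_j^T$, where $z_j = \nu\cdot\mathbf{e}_j$ is the corresponding Jacobi field, gauge-covariance yields
\begin{equation*}
\mathcal{L}_{\mathbf{e}_j}U = \nabla_{z_j\nu, U}U + \nabla_{\mathbf{e}_j^T,U}U + \Theta_U[A(\mathbf{e}_j)].
\end{equation*}
The first summand is precisely $Z_j$; the last is a gauge zero-mode in $Z_{W,g}$ and pairs to zero against $S(U)\in Z_{W,g}^\perp$ by construction. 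An analogous splitting of the rotation vector field $x_1\mathbf{e}_2-x_2\mathbf{e}_1$ isolates $Z_0 = z_1\nabla_{z_2,U}U - z_2\nabla_{z_1,U}U$ as the normal part.

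The heart of the proof is thus to show that the tangential pairings $\int_{\R^4} S(U)\cdot\nabla_{\mathbf{e}_j^T,U}U$ (and the rotation analogue) vanish. Since $\nabla_{T,U}U$ for a tangent vector $T$ acts as a derivative of $U$ along $M$, and since by \eqref{expression S(U) with corrections} the $y$-dependence of $S(U)$ enters only through the bounded combinations $|\A_M|^2\hat z_j$, an integration by parts along $M$ rewrites these tangential integrals as a bulk term involving $\J[\hat z_j]=0$ (which vanishes because the $\hat z_j$ are Jacobi fields) plus boundary contributions at the ends $M_k$. Using the asymptotic expansion \eqref{expansion F_k} and the shift profile \eqref{graph ends with beta}, each boundary contribution becomes a linear combination of the coefficients $a_k$ and $\lambda_k$, which sum to zero by \eqref{a_ks} and \eqref{balancing condition}. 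The main obstacle is making this integration by parts rigorous near infinity, where the Jacobi fields $z_0,z_1,z_2,z_3$ are only bounded and not decaying; here one exploits that outside the Fermi-coordinate patch $U$ coincides with the pure gauge $\boldsymbol{\Psi}$ (so $S(U)$ is exponentially small there), reducing the boundary computation to the precise matching of $U$ with its asymptotic profile on each end, where the balancing conditions then force the cancellation.
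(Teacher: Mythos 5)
Your overall orientation (Noether identities from the translational and rotational invariance of the energy, closed at the end by the balancing conditions at the ends of $M$) is the same as the paper's, but there is a genuine gap at the very first step, and the later steps then head in a direction that does not correspond to how the cancellation actually occurs. The identity $0=\int_{\R^4}S(V)\cdot\mathcal{L}_XV$ does \emph{not} follow from the chain rule ``with no criticality assumption needed'': here $E(U)=+\infty$ (the energy density is of order one along the non-compact surface, which has infinite area), so one cannot differentiate the identity $E(\sigma_s^{*}U)=E(U)$, and the first-variation formula produces boundary terms at infinity. The paper therefore works on truncated cylinders $C_R=\{x_1^2+x_2^2<R^2\}$, where translation, rotation and gauge invariance of the energy \emph{density} yield the exact identity $\int_{C_R}S(U)\cdot\nabla_{x_i,U}U=\int_{\partial C_R}\left(\nabla_UU\cdot\hat r\right)\nabla_{x_i,U}U$, and then computes this flux explicitly: for $i=3$ it converges to $-\tfrac{2\pi}{\eps}\sum_{k}(a_k+\eps^2\lambda_k)\int_{\R^2}|\sdf{V}_1|^2$, which vanishes \emph{only} because $\sum_k a_k=\sum_k\lambda_k=0$. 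For an unbalanced configuration the integral is nonzero, so the statement you assert ``for every sufficiently regular pair $V$'' is false; establishing it is the entire content of the lemma, and the balancing condition enters precisely there, not in a later matching step.

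Two further points. First, you misidentify $Z_j$: in the paper $Z_j=\nabla_{x_j,U}U=\partial_{x_j}U-\Theta_U[A_j]$ is the \emph{full} covariant derivative in the ambient direction $e_j$, so the correct decomposition is simply $\mathcal{L}_{e_j}U=Z_j+\Theta_U[A_j]$, with no tangential remainder. Your ``tangential pairing'' step is therefore unnecessary, and as proposed it is not a valid argument: $S(U)\cdot\nabla_{e_j^T,U}U$ is not of the form $\J[\hat z_j]\cdot(\cdot)$, and leaning on the expression \eqref{expression S(U) with corrections} is dangerous here because that formula contains the unknown constants $c^j$ which the lemma is designed to kill — one must prove the orthogonality by means independent of them. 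Second, the gauge term $\int S(U)\cdot\Theta_U[A_j]$ cannot be discarded by appealing to $L^2$-orthogonality to $Z_{U,g}$: the function $A_j$ does not decay, and the orthogonality computation itself rests on an integration by parts whose boundary contributions the paper again tracks on $\partial C_R$ and combines with the translation flux to form the gauge-covariant boundary expression above.
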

To prove that the vector $(\boldsymbol{c},\gamma)$ vanishes we will show that it is mapped to zero by a positive, linear operator. Using Lemma \ref{orthogonality Z_i} and \eqref{expression S(U) with corrections} we obtain 
\begin{equation}\label{dotprod cj}
\begin{split}
	0&=\int_{\R^4}S(U)\round{\sum_{l=0}^4d_{li}Z_l} \\
	&=\sum_{j=0}^3c^j\int_{\R^4}q|\A_M|^2\hat{z}_j\round{\sum_{l=0}^4d_{li}Z_l}\sdf{V}_1(t)dt+c^4\int_{\R^4}q|\A_M|^2\hat{z}_4\round{\sum_{l=0}^4d_{li}Z_l}\sdf{V}_2(t)dt\\
	&+\sum_{l=0}^4d_{li}\int_{\R^4}\Theta_W[\gamma]Z_l
	\end{split}
\end{equation}
for $i=0,\dots,4$, and 
\begin{align*}
	\begin{split}
		0&=\int_{\R^4}S(U)\Theta_U[\gamma]\\
		&=\sum_{j=0}^3c^j\int_{\R^4}q|\A_M|^2\hat{z}_j\Theta_U[\gamma]\sdf{V}_1+c^4\int_{\R^4}q|\A_M|^2\hat{z}_4\Theta_U[\gamma]\sdf{V}_2\\
		&+\int_{\R^4}\Theta_W[\gamma]\Theta_U[\gamma].
	\end{split}
\end{align*}
Using the expansions \eqref{expansion Zj}-\eqref{expansion Z4} we find
\begin{align*}
	\int_{\R^4}q|\A_M|^2\hat{z}_j\round{\sum_{l=0}^4d_{li}Z_l}\sdf{V}_1(t)dt&=\int_{\R^2}|\sdf{V}_1(t)|^2\int_Mq|\A_M|^2\hat{z}_i\hat{z}_j+o(\eps)\\
	\int_{\R^4}q|\A_M|^2\hat{z}_4\round{\sum_{l=0}^4d_{li}Z_l}\sdf{V}_2(t)dt&=\int_{\R^2}|\sdf{V}_2(t)|^2\int_Mq|\A_M|^2\hat{z}_4\hat{z}_4+o(\eps).
\end{align*}
Moreover, it holds (recalling that $U=W+\vphi$) 
\begin{equation*}
\begin{split}
	\int_{\R^4}\Theta_W[\gamma]\Theta_{U}[\gamma]&=\int_{\R^4}\eps^2|\nabla\gamma|^2+\langle iw,i(w+\varphi)\rangle\gamma^2\\
	&=\int_{\R^4}\gamma [(\eps^2\Delta+|w|^2+\langle w,\varphi\rangle)\gamma]
\end{split}
\end{equation*}
where we denoted with $w$ and $\varphi$ respectively the firsts components of $W$ and $\vphi$, and we used the decay of $\gamma$ to justify the integration by parts. Observe that since $\vphi$ is small (in $\eps$) with respect to $W$, the operator 
\begin{equation*}
	\eps^2\Delta+|w|^2+\langle w,\varphi\rangle
\end{equation*}
is positive for $\eps$ small enough. 
Lastly, using the decay of $\gamma$ and $|\A_M|^2$ together with the fact that $\Theta_{U_0}^*[\sdf{V}_\alpha]=0$, it is direct to see that
\begin{gather*}
	\int_{\R^4}\Theta_W[\gamma]Z_l=o(\eps)\\
	\int_{\R^4}q|\A_M|^2\hat{z}_j\Theta_U[\gamma]\sdf{V}_1(t)dt=o(\eps)\\
	\int_{\R^4}q|\A_M|^2\hat{z}_4\Theta_U[\gamma]\sdf{V}_2(t)dt=o(\eps).
\end{gather*}
This computations show that an equation of the form $\mathcal{L}(\boldsymbol{c},\gamma)=0$ holds, where $\mathcal{L}$ is a linear operator which can be expressed as a small perturbation of the positive operator
\begin{equation*}
	(\boldsymbol{c},\gamma)\mapsto (\|\sdf{V}_\alpha\|_{L^2}^2\boldsymbol{c},(-\eps^2\Delta+|w|^2+\langle w,\varphi\rangle)\gamma) 
\end{equation*}
and, as a consequence, we have $(\boldsymbol{c},\gamma)=0$. We have thus found a solution to the system \eqref{Energy_4d} with the properties required from Theorem \ref{main theorem}.
\qed

\section{Invertibility theory for the gauge-corrected linearised}
\label{Proof of prop inv LW}
In this Section we prove Proposition \ref{invertibility L_W}, of which we recall the statement.

\medskip

	\noindent {\bf Proposition \ref{invertibility L_W}}.\ \emph{Let $\mu\geq 0$ and $\gamma\in(0,1)$. For any $\Lambda\in C^{0,\gamma}_{\mu}(\R^4)$ there exists a $b\in C^{0,\gamma}_\mu(M)$ and a unique solution $\vphi=\mathcal{G}(\Lambda)$ to 
	\begin{equation}\label{L_W equation}
		L_W[\vphi]=\Lambda+\zeta_2b^\alpha(y)\sdf{V}_\alpha(t)
	\end{equation}
	satisfying 
	\begin{equation*}
		\norm{\vphi}_{C^{2,\gamma}_{\mu}(\R^4)}+\norm{b}_{C^{0,\gamma}_\mu(M)}\leq C\norm{\Lambda}_{C^{0,\gamma}_{\mu}(\R^4)}
	\end{equation*}
	for some $C>0$.}

\medskip

To prove Proposition \ref{invertibility L_W} we develop an invertibility theory for the operator $L_W$ on a space of decaying functions.  We will use the fact that on a region close to the manifold the linearised $L_W$ can be approximated by $L_{U_0}$, namely the linearised operator on $M\times\R^2$ around the building block $U_0(y,t)\coloneqq {U_0}(t)$, while outside this region $L_W$ behaves like a positive operator. 
We aim to solve 
\begin{equation}\label{LW equation}
	L_W[\vphi]=-\eps^2\Delta_W\vphi+\vphi+T_W\vphi=\Lambda
\end{equation}
for a right-hand side $\Lambda=\Lambda(x)$ defined on $\R^4$. To this goal we look for a solution of the form 
\begin{equation}\label{decomposition varphi}
	\vphi(x) = \zeta_2(x)\Phi(y,t)+\Psi(x)
\end{equation}
being $\Phi$ a function defined on $M\times\R^2$ and $\Psi$ is defined on $\R^4$. We will develop an invertibility theory for $L_{U_0}$ and then apply it to find the inner function $\Phi$, but as we will see to do so we need a correction on the right-hand side. This correction is necessary for a general solvability theory and it is due to the fact that $L_{U_0}$ has a non-trivial kernel, precisely generated by $\{\sdf{V}_\alpha(t)\}_{\alpha=1,2}$. 
 We will invert $L_{U_0}$ for a family of right-hand sides satisfying an orthogonality condition with  $\sdf{V}_\alpha$, $\alpha=1,2$, for every fixed point of $M$. A way to obtain this condition for a general right-hand side $\Lambda$ is to replace it with
\begin{equation*}
	\Lambda+\zeta_2b^\alpha(y)\sdf{V}_\alpha(t)
\end{equation*}
in \eqref{L_W equation}, with $b^\alpha$ smooth functions defined on $M$. We observe that $\zeta_2b^\alpha\sdf{V}_\alpha$ is a well defined function on $\R^4$, since it is understood to be $0$ outside of $\N$. In what follows it will be useful the following definition
\begin{align*}
	\mathcal{R}_U[f,\Phi]\coloneqq -\Delta_U(f\Phi)+f\Delta_U\Phi.
\end{align*}
More precisely, if $\Phi=(\phi,\omega)$ and $U=(u,A)$, we have
\begin{equation}\label{definition of R}
	\mathcal{R}_U[f,\Phi]=\begin{pmatrix}-2\nabla^{A}\phi\cdot d f-\phi\Delta f\\
d^{*}\left(df\wedge\omega\right)+d^{*}\omega df-d\left(df\cdot\omega\right)-\omega\Delta f
\end{pmatrix}
\end{equation}
and 
\begin{equation*}
	\abs{\mathcal{R}_U[f,\Phi]}\leq C(|df|+|D^2 f|)(|\Phi|+|\nabla_U\Phi|).
\end{equation*}
Equation \eqref{LW equation} becomes 
\begin{align*}
	L_W[\vphi]&= \zeta_2L_{U_0}[\Phi]+L_W[\Psi]\\
	&\ +\zeta_2\round{L_W-L_{U_0}}[\Phi]+\eps^2\mathcal{R}_W[\zeta_2,\Phi]\\
	&=\zeta_2\round{\Lambda+b^\alpha(y)\sdf{V}_\alpha(t)}+(1-\zeta_2)\Lambda.
\end{align*}
Using \eqref{decomposition varphi}, we infer that such equation is solved if the pair $(\Phi,\Psi)$ solves the system
\begin{align}
	L_{U_0}[\Phi]+\round{L_W-L_{U_0}}[\Phi]+\zeta_1T_W\Psi=\Lambda+b^\alpha(y)\sdf{V}_\alpha(t)\quad\text{on supp}\,\zeta_2&\label{inner equation}\\
	-\eps^2\Delta_W\Psi+\Psi+(1-\zeta_1)T_W\Psi+\eps^2\mathcal{R}_W[\zeta_2,\Phi]=(1-\zeta_2)\Lambda\quad\text{on }\R^4&\label{outer equation}
\end{align}
where we used the fact that $\zeta_2\zeta_1=\zeta_1$. It is important to notice that the term $T_W$ does not contain any derivative. 
We start by solving \eqref{outer equation} with the following Lemma, whose proof is postponed to Section \ref{Proof_left}.
\begin{lemma}\label{outer invertibility}
	For every $\eps>0$ sufficiently small and any $\Gamma$ with $\|\Gamma\|_{C^{0,\gamma}_\mu(\R^4)}<\infty$ there exists $\Psi=\Psi(\Gamma)$, defining a linear map of $\Gamma$, satisfying 
	\begin{equation*}
 -\eps^2\Delta_W\Psi+\Psi+Q_\eps\Psi=\Gamma\quad\text{on }\R^4.
	\end{equation*}
where $Q_\eps=(1-\zeta_2)T_W$.
Moreover
	\begin{equation*}
		\norm{\Psi}_{C^{2,\gamma}_\mu(\R^4)}\leq C\|\Gamma\|_{C^{0,\gamma}_\mu(\R^4)}
	\end{equation*}
 for a constant $C>0$ independent of $\eps$.
\end{lemma}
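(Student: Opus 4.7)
The plan is to treat the operator $L_{\mathrm{out}}:=-\Delta_W+\mathrm{Id}+Q_\eps$ as a small perturbation of the principal part $L_0:=-\Delta_W+\mathrm{Id}$. Recall that $-\Delta_W$ decouples into the magnetic Laplacian $-\Delta^A$ on the scalar component and the Hodge Laplacian $-\Delta$ acting componentwise on the $1$-form part, so $L_0$ is a weakly coupled elliptic system whose symbol at infinity approaches that of $-\Delta+\mathrm{Id}$ (indeed, beyond the support of $\zeta_\delta$ the approximation $W$ is a pure gauge and $A$ is gauge-equivalent to the trivial connection).

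The first step is the weighted invertibility of $L_0$, namely the a priori estimate
\begin{equation*}
\|\Psi\|_{C^{2,\gamma}_\mu(\R^4)}\leq C\|L_0\Psi\|_{C^{0,\gamma}_\mu(\R^4)}
\end{equation*}
together with existence in $C^{2,\gamma}_\mu(\R^4)$ for every source. Existence in $H^1$ follows from Lax-Milgram and the obvious coercivity
\begin{equation*}
\int_{\R^4}|(\nabla-iA)\phi|^2+|\phi|^2\geq\|\phi\|^2_{L^2},\quad \int_{\R^4}|\nabla\omega|^2+|\omega|^2\geq\|\omega\|^2_{L^2}.
\end{equation*}
The weighted $L^\infty$ bound is obtained from a barrier argument: the function $(1+|x|)^{-\mu}$ is a supersolution of $(-\Delta+1)$ outside a large ball, while the exponential decay of the Green's function of $-\Delta+1$ in $\R^4$ propagates $(1+|x|)^{-\mu}$ decay of the right-hand side to the solution (the scalar magnetic component is controlled by Kato's inequality $|\nabla|\phi||\leq|(\nabla-iA)\phi|$). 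Interior Schauder estimates on unit balls, performed in a local Coulomb gauge where needed to tame $A$, then upgrade the result to the full $C^{2,\gamma}_\mu$ bound.

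The second step is to observe that $Q_\eps=(1-\zeta_2)T_W$ is exponentially small in $\eps$. Indeed, $(1-\zeta_2)$ is supported where $|t+h_1(\eps y)|\geq \delta/\eps+2$ in Fermi coordinates. In this region either $W=\boldsymbol{\Psi}$ is the pure gauge, in which case $T_W\equiv 0$ since $|u|\equiv 1$ and $\nabla^Au\equiv 0$, or $W$ agrees with $W_1=U_0(t)+O(\eps^2e^{-\sigma|t|})$, in which case the entries of $T_W$ are controlled by $1-|u_0|^2$ and $\nabla^{A_0}u_0$, both of which decay like $e^{-|t|}$ by \eqref{estim1-f1-a}. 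Hence $|Q_\eps|\leq Ce^{-c\delta/\eps}$ pointwise on $\R^4$, which dominates any polynomial weight and yields
\begin{equation*}
\|Q_\eps\Psi\|_{C^{0,\gamma}_\mu(\R^4)}\leq Ce^{-c\delta/\eps}\|\Psi\|_{C^{2,\gamma}_\mu(\R^4)}.
\end{equation*}

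With these two ingredients, the equation is recast as the fixed point $\Psi=L_0^{-1}\Gamma-L_0^{-1}(Q_\eps\Psi)$; the perturbation map has operator norm $O(e^{-c\delta/\eps})$ on $C^{2,\gamma}_\mu(\R^4)$, and for $\eps$ small the contraction mapping principle delivers existence, uniqueness, and the quantitative estimate. The main obstacle is Step 1: getting polynomially weighted decay for the gauge-coupled scalar component requires a careful barrier construction that interacts correctly with $A$, since Lax-Milgram by itself gives only $H^1$ control. Once this weighted invertibility of the unperturbed operator is established, the perturbation argument is routine thanks to the exponential smallness of $Q_\eps$ in its support.
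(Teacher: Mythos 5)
Your strategy is sound and reaches the correct conclusion, but it is organized differently from the paper's. The paper disposes of this lemma by declaring it a small variation of Lemma \ref{lemma1}: there the a priori bound $\|\Psi\|_{\infty}\leq C\|\Gamma\|_{\infty}$ is obtained by a compactness/contradiction (blow-up) argument, existence comes from solving Dirichlet problems on an exhaustion by balls (where the bilinear form is positive) followed by a diagonal extraction, and Schauder estimates upgrade the result to $C^{2,\gamma}$; the zeroth-order term $Q_\eps$ is harmless because it does not destroy positivity. You instead invert the unperturbed operator $-\Delta_W+\mathrm{Id}$ in the weighted space directly, via Kato's inequality and the explicit supersolution $(1+|x|)^{-\mu}$, and then treat $Q_\eps$ by contraction, exploiting its exponential smallness $O(e^{-c\delta/\eps})$ on the support of $1-\zeta_2$; this smallness claim is correct, since where $W$ is the pure gauge $T_W$ vanishes identically, and elsewhere on that set $1-|u|^2$ and $\nabla^A u$ are $O(e^{-\sigma|t|})$ with $|t|\gtrsim \delta/\eps$. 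Your barrier argument is more quantitative and avoids the compactness step, which is a genuine simplification. One caveat: the assertion that ``existence in $H^1$ follows from Lax-Milgram'' is only literally true when $(1+|x|)^{-\mu}\in L^2(\R^4)$, i.e.\ $\mu>2$; the lemma allows any $\mu\geq 0$, and for $\mu\leq 2$ the data need not be in $L^2$ and no $H^1$ solution exists in general (constant data for $\mu=0$ already shows this). The standard repair --- which is exactly the exhaustion the paper's template supplies --- is to solve on balls $B_m$ with zero boundary data, where Lax-Milgram does apply, use your barrier to obtain weighted sup bounds uniform in $m$, and pass to the limit; since you already have the barrier, this costs nothing. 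With that adjustment your proof is complete.
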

Recalling that since $\|h_{1}\|_{\infty}\leq C\varepsilon$ we have, by choosing $\varepsilon$ sufficiently small, $|t|>\delta/\eps$ on $\mathrm{supp}\,\nabla\zeta_2$. Therefore
\begin{align*}
	\abs{\mathcal{R}_W[\zeta_2,\Phi]}&\leq e^{-\sigma|t|}r^{-\mu}\left(\|D\Phi\|_{C^{0,\gamma}_{\mu,\sigma}(M\times\R^2)}+\|\Phi\|_{C^{0,\gamma}_{\mu,\sigma}(M\times\R^2)}\right)\\
	&\leq Ce^{-\frac{\delta'}{\varepsilon}}r^{-\mu}\|\Phi\|_{C^{2,\gamma}_{\mu,\sigma}(M\times\R^2)}
\end{align*}
thus we find 
\begin{equation*}
	\norm{\mathcal{R}_W[\zeta_2,\Phi]}_{C^{0,\gamma}_\mu(\R^4)}\leq Ce^{-\frac{\delta'}{\varepsilon}}\|\Phi\|_{C^{2,\gamma}_{\mu,\sigma}(M\times \R^2)}.
\end{equation*}
Now, using Lemma \ref{outer invertibility}, we find a solution $\Psi=\Psi_1+\Psi_2$ of \eqref{outer equation}, where 
\begin{equation}\label{eq Delta_W}
	\begin{split}
		-\eps^2\Delta_W\Psi_1+\Psi_1+(1-\zeta_1)T_W\Psi_1&=-\eps^2\mathcal{R}_W[\zeta_2,\Phi],\\
	-\eps^2\Delta_W\Psi_2+\Psi_2+(1-\zeta_1)T_W\Psi_2&=(1-\zeta_2)\Lambda.
	\end{split}
\end{equation}
Moreover, it holds
\begin{align*}
	\left\Vert \Psi_1\right\Vert _{C^{2,\gamma}_\mu(\R^4)}&\leq C\eps^2\norm{\mathcal{R}_W[\zeta_2,\Phi]}_{C^{0,\gamma}_\mu(\R^4)}\leq C\eps^2 e^{-\frac{\delta'}{\varepsilon}}\|\Phi\|_{C^{2,\gamma}_{\mu,\sigma}(M\times \R^2)},\\
	\left\Vert \Psi_2\right\Vert _{C^{2,\gamma}_\mu(\R^4)}&\leq C\norm{(1-\zeta_2)\Lambda}_{C^{0,\gamma}_\mu(\R^4)}.
\end{align*}
This allows us to reduce the system \eqref{inner equation}-\eqref{outer equation} to a single equation depending just on $\Phi$. First, we extend the so found equation to a problem on entire $M\times\R^2$. Define 
\begin{equation}\label{definition of sdfB}
	\tilde{\sdf{B}}[\Phi]\coloneqq \zeta_4\sdf{B}[\Phi]=\zeta_4\round{L_W-L_{U_0}}[\Phi],\quad\tilde{\Lambda}=\zeta_4\Lambda
\end{equation}
and it's straightforward to check that $\tilde{\sdf{B}}$ is a linear operator of size $O(\eps)$.
With this notation, \eqref{inner equation} will be solved if we find a solution to 
\begin{equation}\label{extended inner equation}
	L_{U_0}[\Phi]+\tilde{\sdf{B}}[\Phi]+\zeta_1T_W\Psi=\tilde{\Lambda}+b^\gamma(y)\sdf{V}_\gamma(t)\quad\text{on }M\times\R^2.
\end{equation}
To solve \eqref{extended inner equation} we use the following result, which will be proved in Section \ref{Invertibility of LUbullet}.
\begin{proposition}\label{Inversion for LUbullet - theorem}
	Let $\mu\geq 0$, $\gamma\in (0,1)$ and $\sigma>0$ sufficiently small. Then, for every $\tilde\Lambda\in C^{0,\gamma}_{\mu,\sigma}(M\times\R^2)$ there exists $b\in C^{0,\gamma}_{\mu}(M)$ such that the problem 
	\begin{equation}
	L_{U_0}[\Phi]=\tilde\Lambda+b^\alpha(y)\sdf{V}_\alpha(t)\quad\text{on }M\times\R^2
\end{equation}
	admits a unique solution $\Phi=\mathcal{T}(\tilde\Lambda)$ satisfying 
	\begin{equation*}
		\norm{\Phi}_{C^{2,\gamma}_{\mu,\sigma}(M\times\R^2)}+\norm{b}_{C^{0,\gamma}_{\mu}(M)}\leq C\|\tilde\Lambda\|_{C^{0,\gamma}_{\mu,\sigma}(M\times\R^2)}
	\end{equation*}
	for some $C>0$.
\end{proposition}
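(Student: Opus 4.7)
The plan is to exploit the product structure of the operator on $M_\eps\times\R^2$: near the manifold $L_{U_0}$ decomposes as $L_{U_0}=-\Delta_{M_\eps}+\sdf{L}$, where $\sdf{L}$ acts only in the $t$-fiber and has kernel $\mathrm{span}\{\sdf{V}_1,\sdf{V}_2\}$. The parameter $b^\alpha(y)$ plays exactly the role needed to restore the fiberwise solvability condition \eqref{lemma1 ort cond} of Lemma \ref{invertibility lstr}. Concretely I set
\[
b^\alpha(y)=-\frac{1}{\|\sdf{V}_\alpha\|^2_{L^2(\R^2)}}\int_{\R^2}\tilde\Lambda(y,t)\cdot\sdf{V}_\alpha(t)\,dt,\quad\alpha=1,2,
\]
so that $\tilde\Lambda+b^\gamma\sdf{V}_\gamma$ is $L^2(\R^2)$-orthogonal to $\sdf{V}_1,\sdf{V}_2$ for every fixed $y$. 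The exponential decay of $\sdf{V}_\alpha$ in $|t|$ and the definition of $\|\cdot\|_{C^{0,\gamma}_{\mu,\sigma}}$ immediately give $\|b\|_{C^{0,\gamma}_{\mu}(M_\eps)}\leq C\|\tilde\Lambda\|_{C^{0,\gamma}_{\mu,\sigma}(M_\eps\times\R^2)}$.

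For existence I would work in the Hilbert space
\[
\mathcal{H}=\braces{\Phi\in H^1_{U_0}(M_\eps\times\R^2)\ :\ \int_{\R^2}\Phi(y,t)\cdot\sdf{V}_\alpha(t)\,dt=0\text{ for a.e.\ }y,\ \alpha=1,2}.
\]
For $\Phi\in\mathcal{H}$, Fubini and the coercivity estimate \eqref{coercivity} applied fiber-by-fiber in $y$ yield
\[
\int_{M_\eps\times\R^2}L_{U_0}[\Phi]\cdot\Phi=\int_{M_\eps}|\nabla_{M_\eps}\Phi|^2\,dy+\int_{M_\eps}\bracket{\sdf{L}[\Phi(y,\cdot)],\Phi(y,\cdot)}_{L^2(\R^2)}\,dy\geq c\,\|\Phi\|^2_{H^1_{U_0}}.
\]
Lax--Milgram then produces a unique weak solution of $L_{U_0}[\Phi]=\tilde\Lambda+b^\gamma\sdf{V}_\gamma$ in $\mathcal{H}$ with an $L^2\!\to\!H^1$ bound; consistency of the projected equation is automatic by the choice of $b$, and standard interior Schauder estimates promote this to a local $C^{2,\gamma}$ bound.

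The remaining task is the weighted pointwise estimate $\|\Phi\|_{C^{2,\gamma}_{\mu,\sigma}}\leq C\|\tilde\Lambda\|_{C^{0,\gamma}_{\mu,\sigma}}$. The exponential decay in $t$ is handled by a barrier argument: by \eqref{expanded_linearized}, $T_{U_0}$ vanishes exponentially in $|t|$, so that $L_{U_0}\sim-\Delta+\mathrm{Id}$ for $|t|\gg 1$; hence a supersolution of the form $r_\eps(y)^{-\mu}e^{-\sigma|t|}$, $\sigma\in(0,1)$, combined with the maximum principle on each scalar block, transfers exponential decay from the right-hand side to $\Phi$. The decay in the tangential variable $r_\eps^{-\mu}$ is the main obstacle, because the coefficients of $-\Delta_{M_\eps}$ do not themselves decay, so the weight has to be extracted entirely from $\tilde\Lambda$. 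My plan is to proceed by contradiction and blow-up, following the strategy of \cite{del2011giorgi, del2013entire}: if the weighted estimate fails along a sequence $\eps_n\to 0$, renormalize $\Phi_n$ so that its weighted norm equals one and is attained, and look at where this maximum is achieved. Passing to a limit produces a nontrivial bounded solution of $L_{U_0}[\Phi_\infty]=0$ either on $\R^2\times\R^2$, or on a half-catenoidal end, orthogonal to $\sdf{V}_1,\sdf{V}_2$ fiberwise; by the non-degeneracy of the planar vortex $U_0$ (Lemma \ref{invertibility lstr}) and the asymptotic flatness of $M$, this limit must vanish, contradicting the normalization. This is the step where one must control carefully the behaviour on the ends in terms of the logarithmic growth of $h_0$ and the gap condition \eqref{divendcond}.
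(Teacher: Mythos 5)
Your choice of $b^\alpha$ as the fiberwise projection coefficient is exactly the paper's, and your overall architecture (fiberwise coercivity for existence, barriers for the $t$-decay, blow-up for the $y$-decay) is a legitimate alternative to the paper's route, which instead solves the flat model problem on $\R^2\times\R^2$ by Fourier transform in $y$, glues local solutions over a covering of $M_\eps$ by coordinate patches treating $\Delta_{M_\eps}-\Delta_\xi$ as a small perturbation, and obtains the weighted estimates by conjugating with $\varrho=r_\eps^{-\mu}e^{-\sigma|t|}$ and absorbing the $O(\eps+\sigma)$ commutator. However, your existence step has a genuine gap: the global Lax--Milgram argument on $M_\eps\times\R^2$ requires the right-hand side $\tilde\Lambda+b^\gamma\sdf{V}_\gamma$ to define a bounded functional on $\mathcal{H}$, i.e.\ to lie in $L^2(M_\eps\times\R^2)$. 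Since $M_\eps$ is a complete noncompact surface with planar-type ends, a function bounded by $r_\eps^{-\mu}e^{-\sigma|t|}$ is \emph{not} in $L^2$ for $\mu\leq 1$ (and the proposition is stated for all $\mu\geq 0$); even when $\mu$ is large enough, $\|\tilde\Lambda\|_{L^2(M_\eps\times\R^2)}\leq C\eps^{-1}\|\tilde\Lambda\|_{C^{0,\gamma}_{\mu,\sigma}}$ because of the area dilation, so the resulting $H^1$ bound is not uniform in $\eps$ and cannot by itself feed the local Schauder estimates with an $\eps$-independent constant. One can repair this (e.g.\ solve first for compactly supported truncations and pass to the limit using the uniform weighted a priori bound), but as written the step fails.

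The second soft spot is the blow-up step. The Liouville-type statement you need — that the only solution of $L_{U_0}[\Phi]=0$ on $\R^2\times\R^2$ which is bounded (against the rescaled weight) and fiberwise orthogonal to $\sdf{V}_1,\sdf{V}_2$ is $\Phi\equiv 0$ — does not follow directly from Lemma \ref{invertibility lstr}, which is an $L^2(\R^2)$ statement for the two-dimensional operator $\sdf{L}$; a merely bounded kernel element of the four-dimensional operator is not a priori in $L^2$ in the $y$ variable, so the 2D nondegeneracy cannot be applied fiber by fiber without first gaining integrability (this is precisely what the paper's Fourier decomposition and the $\varrho^{-\nu}$-weighted $L^\infty$-to-$H^1$ estimate of \S\ref{Linfty estimates subsub} accomplish). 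Relatedly, the "maximum principle on each scalar block" for the $e^{-\sigma|t|}$ barrier must be phrased as a vector-valued comparison for $|\Phi|$, since $T_{U_0}$ couples $\phi$ and $\omega$; this works because the coupling decays exponentially in $|t|$, but it is not a componentwise argument. With the truncation-and-limit fix for existence and a proved 4D Liouville lemma, your scheme would go through, but both ingredients are currently missing.
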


Using Proposition \ref{Inversion for LUbullet - theorem}, we can rewrite \eqref{extended inner equation} as the linear problem
\begin{equation}\label{phi+gphi}
	\Phi+\mathcal{G}[\Phi]=\mathcal{H}
\end{equation}
where 
\begin{align*}
	\mathcal{G}[\Phi]&=\mathcal{T}\round{\tilde{\sdf{B}}[\Phi]+\zeta_1T_W\Psi_1[\Phi]},\\
	\mathcal{H}&=\mathcal{T}(\tilde\Lambda-\zeta_1T_W\Psi_2[\Lambda]),
\end{align*}
and the notations $\Psi_1[\Phi],\Psi_2[\Lambda]$ are chosen to highlight the dependencies from $\Phi,\Lambda$ arising from \eqref{eq Delta_W}. Remark that 
\begin{align*}
	\|\tilde{\sdf{B}}[\Phi]\|_{C^{0,\gamma}_{\mu,\sigma}(M\times\R^2)}&\leq C\eps\|\Phi\|_{C^{2,\gamma}_{\mu,\sigma}(M\times\R^2)}\\\|\zeta_1T_W\Psi_1[\Phi]\|_{C^{0,\gamma}_{\mu,\sigma}(M\times\R^2)}&\leq C \left\Vert \Psi_1[\Phi]\right\Vert _{C^{0,\gamma}_\mu(\R^4)}\leq Ce^{-\frac{\delta'}{\eps}}\|\Phi\|_{C^{2,\gamma}_{\mu,\sigma}(M\times \R^2)}\end{align*}
where we used that $\zeta_1T_W\sim e^{-|x|}\chi_{\{\zeta_1>0\}}(x)$	to control the exponential decay in the weighted norm. 
Therefore, we have 
\begin{align*}
	\norm{\mathcal{G}[\Phi]}_{C^{2,\gamma}_{\mu,\sigma}}&\leq C\round{\|\tilde{\sdf{B}}[\Phi]\|_{C^{0,\gamma}_{\mu,\sigma}}+\|\zeta_1T_W\Psi_1[\Phi]\|_{C^{0,\gamma}_{\mu,\sigma}}}\\
	&\leq C\round{\eps+e^{-\frac{\delta'}{\eps}}}\|\Phi\|_{C^{2,\gamma}_{\mu,\sigma}}
\end{align*}
and hence, choosing $\eps$ sufficiently small we find a unique solution to \eqref{phi+gphi}, from which it follows the existence of a unique solution $(\Phi,\Psi)$ to system \eqref{inner equation}-\eqref{outer equation}. Hence, $\vphi=\zeta_2\Phi+\Psi$ solves \eqref{LW equation} and  it follows directly that 
\begin{equation}
	\norm{\vphi}_{C^{2,\gamma}_{\mu}(\R^4)}\leq C\norm{\Lambda}_{C^{0,\gamma}_{\mu}(\R^4)}.
\end{equation}
which concludes the proof of Proposition \ref{invertibility L_W}.
\qed

\section{Invertibility theory on $M\times\R^2$}\label{Invertibility of LUbullet}

The aim of this Section is to prove Proposition \ref{Inversion for LUbullet - theorem}, which we now recall.

\medskip

\noindent{\bf Proposition \ref{Inversion for LUbullet - theorem}}.\ \emph{Let $\mu\geq 0$, $\gamma\in (0,1)$ and $\sigma>0$ sufficiently small. Then, for every $\tilde\Lambda\in C^{0,\gamma}_{\mu,\sigma}(M\times\R^2)$ there exists $b\in C^{0,\gamma}_{\mu}(M)$ such that the problem 
	\begin{equation}
	L_{U_0}[\Phi]=\tilde\Lambda+b^\alpha(y)\sdf{V}_\alpha(t)\quad\text{on }M\times\R^2
\end{equation}
	admits a unique solution $\Phi=\mathcal{T}(\tilde\Lambda)$ satisfying 
	\begin{equation*}
		\norm{\Phi}_{C^{2,\gamma}_{\mu,\sigma}(M\times\R^2)}+\norm{b}_{C^{0,\gamma}_{\mu}(M)}\leq C\|\tilde\Lambda\|_{C^{0,\gamma}_{\mu,\sigma}(M\times\R^2)}
	\end{equation*}
	for some $C>0$.}

\medskip

Recall that the operator $L_{U_0}$, is given by 
\begin{equation}\label{eqbul}
	L_{U_0}[\Phi]=-\Delta_{t,{U_0}}\Phi-\eps^2\Delta_{M}\Phi+\Phi+T_{{U_0}}(t)\Phi=\Psi\quad\text{on }M\times\R^2.
\end{equation}
We look for a solution to \eqref{eqbul} on a space of functions defined on $M\times\R^2$ with suitable decay. 
This cannot be done for every choice of right-hand side $\Psi$, hence instead of \eqref{eqbul} we aim to solve the projected problem
\begin{equation}\label{corrected projection - bullet}
\begin{cases}
	L_{U_0}[\Phi]=\Psi+b^\alpha(y)\sdf{V}_\alpha(t)&\text{on }M\times\R^2\\
	\int_{\R^2}\Phi(y,t)\cdot\sdf{V}_\alpha(t)dt=0&\alpha=1,2.
\end{cases}
\end{equation}
where $b = (b^1,b^2)$ is defined by
\begin{equation*}
	b^\alpha(y)\int_{\R^2}|\sdf{V}_\alpha(t)|^2dt=-\int_{\R^2}\Psi(y,t)\cdot\sdf{V}_\alpha(t)dt\quad\forall y\in M.
\end{equation*}
This variation will provide unique solvability, in the sense of Proposition \ref{Inversion for LUbullet - theorem}.
To prove the result, we will first prove the existence of an inverse of the same operator in the Euclidean space $\R^4=\R^2\times\R^2$ and use this to solve the problem locally, up to a small operator bounded by the size $\eps$ of the dilation. Subsequently, we find an actual solution by gluing and fixed point techniques.

\subsection{Solvability theory for the linearised in the flat space}\label{linflatspace}
To indicate a point in $\R^4$ we will use two coordinates $(y,t)\in\R^2\times\R^2$, so that the centre of the linearisation is ${U_0}={U_0}(t)$. Our aim is to solve 
\begin{equation}\label{lin R4}
\begin{cases}
	-\Delta_{t,{U_0}}\Phi-\eps^2\Delta_y \Phi+\Phi+T_{{U_0}}(t)\Phi=\Psi+b^\alpha(y)\sdf{V}_\alpha(t)&\text{in }\R^4,\\
	\int_{\R^2}\Phi(y,t)\cdot\sdf{V}_\alpha(t)dt=0&\alpha=1,2.
\end{cases}
\end{equation}
where 
\begin{equation}\label{balpha flat}
	b^\alpha(y)=-\frac{1}{\int_{\R^2}|\sdf{V}_\alpha(t)|^2dt}\int_{\R^2}\Psi(y,t)\cdot\sdf{V}_\alpha(t)dt,\quad \alpha=1,2.
\end{equation}
The following result holds.
\begin{proposition}\label{Solvability lin flat space}
	Let $\Psi\in L^2(\R^4)$ and let $b^\alpha$ be given by \eqref{balpha flat} for $\alpha=1,2$. Then problem \eqref{lin R4}
admits a unique solution $\Phi=\mathcal{T}(\Psi)\in H^{1}_{U_0}(\R^4)\times L^2(\R^2)$ satisfying 
\begin{equation}\label{orth Phi}
	\int_{\R^2}\Phi(y,t)\cdot\sdf{V}_\alpha(t)dt=0,\quad \alpha=1,2.
\end{equation}
Moreover, the estimate
\begin{equation}\label{global estimates}
	\norm{\Phi}_{H^{1}_{U_0}(\R^4)}\leq C\norm{\Psi}_{L^2(\R^4)}
\end{equation}
holds for some $C>0$. If besides $\Psi\in C^{0,\gamma}(\R^4)$, then
\begin{equation}\label{holder estimates}
	\norm{\Phi}_{C^{2,\gamma}(\R^4)}\leq C\norm{\Psi}_{C^{0,\gamma}(\R^4)}.
\end{equation}
\end{proposition}

Let us first prove existence. We observe that if $\Phi=(\phi,\omega)$ and $\Psi=(\psi,\eta)$ then we can split $\omega=\omega_tdt+\omega_ydy$ and $\eta=\eta_tdt+\eta_ydy$. In this way the first equation in system \eqref{lin R4} reduces to 
	\begin{equation}\label{system broken}
		\begin{cases}
-\eps^2\Delta_{y}\phi-\Delta_t^{{A_0}}\phi-\frac{1}{2}(1-3\left|{u_0}\right|^{2})\phi+2i\eps^2\nabla^{{A_0}}{u_0}\cdot\omega_{t}=\psi+b^\alpha\sdf{V}_\alpha^1\\
-\eps^2\Delta_{y}\omega_t-\Delta_t\omega_{t}+\left|{u_0}\right|^{2}\omega_{t}-2\bracket{\nabla^{{A_0}}{u_0},i\phi}=\eta_{t}+b^\alpha\sdf{V}_\alpha^2\\
-\eps^2\Delta_{y}\omega_y-\Delta_t\omega_{y}+\left|{u_0}\right|^{2}\omega_{y}=\eta_{y}
\end{cases}
	\end{equation}
where we have that the last equation is not coupled with the first two. Therefore, we need to solve two separate problems:
\begin{equation}\label{flat system}
	\begin{cases}
		-\eps^2\Delta_{y}\phi-\Delta_{t}^{{A_0}}\phi-\frac{1}{2}(1-3\left|{u_0}\right|^{2})\phi+2i\eps^2\nabla^{{A_0}}{u_0}\cdot\omega_t=\psi+b^\alpha\sdf{V}_\alpha^1\\
		-\eps^2\Delta_{y}\omega_t-\Delta_{t}\omega_t+\left|{u_0}\right|^{2}\omega-2\bracket{\nabla^{{A_0}}{u_0},i\phi}=b^\alpha\sdf{V}_\alpha^2
	\end{cases}
\end{equation}
where $\omega=\omega(x,y)dx$, and the second one is 
\begin{equation}\label{uncoupled equation}
	-\eps^2\Delta_y\omega_y-\Delta_x\omega_y+f^2\omega_y=\eta_y\quad\text{in }\R^4.
\end{equation}

\subsubsection{Solving system \eqref{flat system}}\label{solving flat system}
We apply Fourier transform to the whole system \eqref{flat system} on the $y$ variable, obtaining (after dropping the subscript $t$ from $\omega_t$ for simplicity)
\begin{equation*}
	\begin{cases}
		-\Delta_{t}^{{A_0}}\hat{\phi}+\eps^2\left|\xi\right|^{2}\hat{\phi}-\frac{1}{2}(1-3\left|{u_0}\right|^{2})\hat{\phi}+2i\eps^2\nabla^{{A_0}}{u_0}\cdot\hat{\omega}=\hat\psi+\hat b^\alpha\sdf{V}_\alpha^1\\
		-\Delta_{t}\hat{\omega}+\eps^2\left|\xi\right|^{2}\hat{\omega}+\left|{u_0}\right|^{2}\hat{\omega}-2\langle\nabla^{{A_0}}{u_0},i\hat\phi\rangle=\hat\eta+\hat b^\alpha\sdf{V}_\alpha^2,
	\end{cases}
\end{equation*}
where $\xi$ is the Fourier variable, which can be written in a compact way as 
\begin{equation}\label{compact-system}
	(\sdf{L}+\eps^2\left|\xi\right|^{2})\hat\Phi=\hat\Psi+\hat b^\alpha\sdf{V}_\alpha\quad\text{in }\mathbb{R}^{2}
\end{equation}
where $\sdf{L}$ is the gauge-orthogonal, two-dimensional linearised around ${U_0}(t)$ given by \eqref{Lstraight} and
\begin{equation}\label{balphaxi}
	\hat b^\alpha(\xi)=-\frac{1}{\int_{\R^2}|\sdf{V}_\alpha(t)|^2dt}\int_{\R^2}\hat\Psi(\xi,t)\cdot\sdf{V}_\alpha(t)dt,\quad \alpha=1,2.
\end{equation}
We now observe that \eqref{compact-system} can be solved since the bilinear form 
\begin{equation*}
	B_\xi[\Phi,\Psi]=\int_{\R^2}(\sdf{L}+\eps^2\left|\xi\right|^{2})\Phi\cdot\Psi
\end{equation*}
is coercive on $H^{1}_{U_0}(\R^2)\cap Z_{{U_0}}^\perp$, being a sum of a coercive operator (see \eqref{coercivity}) and a positive one. Precisely, choosing $\hat b^\alpha$s as in \eqref{balphaxi} ensures that the right-hand side belongs to $Z_{{U_0}}^\perp$ and in turn yields by Lax--Milgram theorem to the existence of a unique solution $\hat\Phi$ to problem \eqref{compact-system}. Besides, we have 
\begin{align*}
	\gamma\| \hat\Phi\| _{H^{1}_{U_0}}^{2}+\eps^2\left|\xi\right|^{2}\|\hat \Phi\| _{L^{2}}^{2}&\leq B_{\xi}[\hat\Phi,\hat\Phi]\\
	&=\langle \hat\Psi,\hat\Phi\rangle _{L^{2}}\\
	&\leq\| \hat\Psi\| _{L^{2}}\|\hat\Phi\| _{L^{2}}\\
	&\leq \frac{1}{2\delta^2}\| \hat\Psi\| _{L^{2}}^2+\frac{\delta^2}{2}\|\hat\Phi\| _{L^{2}}^2
\end{align*}
and hence choosing $\delta>0$ sufficiently small we get 
\begin{equation}\label{final transformed estimate}
	\int_{\mathbb{R}^{2}}\left(|\hat{\phi}|^{2}+|\nabla_{t}^{{A_0}}\hat{\phi}|^{2}+|\xi|^{2}|\hat{\phi}|^{2}+|\hat{\omega}|^{2}+|\nabla_{t}\hat{\omega}|^{2}+|\xi|^{2}|\hat{\omega}|^{2}\right)\leq C\int_{\mathbb{R}^{2}}\left(|\hat{\psi}|^{2}+|\hat{\eta}|^{2}\right)
\end{equation}
for some $C>0$. Now, applying inverse Fourier transform and Plancherel's Theorem we find a solution $\Phi\in H^{1}_{U_0}(\R^4)\times L^2(\R^2)$ to \eqref{flat system} satisfying \eqref{orth Phi} and
\begin{equation*}
	\left\Vert \Phi\right\Vert _{H^{1}_{U_0}\left(\mathbb{R}^{4}\right)}\leq C\left\Vert \Psi\right\Vert _{L^{2}\left(\mathbb{R}^{4}\right)}.
\end{equation*}
This proves the existence part for \eqref{flat system}.
\qed

\subsubsection{Solving equation \eqref{uncoupled equation}}\label{uncoupled eq} Here we mimic the procedure of \S\ref{solving flat system} and obtain a solution $\varphi$ of \eqref{uncoupled equation} satisfying 
\begin{equation*}
	\norm{\omega_y}_{H^1(\R^4)}\leq\norm{\eta_y}_{L^2(\R^4)}
\end{equation*}
finding in this way a solution to \eqref{lin R4} satisfying \eqref{global estimates}. The bilinear form related to the operator
\begin{equation*}
	B\left[\phi,\psi\right]\coloneqq\int_{\mathbb{R}^{2}}\phi\left(-\Delta\psi+f^{2}\psi\right).
\end{equation*}
is coercive in the following sense.
\begin{lemma}\label{coercivity simple}
	The operator $B$ is coercive on $H^{1}\left(\mathbb{R}^{2}\right)$, i.e. there exists a $\lambda>0$ such that 
	\begin{equation*}
	B\left[\phi,\phi\right]\geq\lambda\left\Vert \phi\right\Vert _{H^{1}}^{2}
\end{equation*}
for every $\phi\in H^1(\R^2)$.
\end{lemma}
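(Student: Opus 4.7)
\textbf{Proof plan for Lemma \ref{coercivity simple}.} After an integration by parts, the bilinear form reads
\begin{equation*}
B[\phi,\phi]=\int_{\R^2}\round{|\nabla\phi|^2+f^2\phi^2}
\end{equation*}
for every $\phi\in H^1(\R^2)$. The gradient term already controls $\|\nabla\phi\|_{L^2}^2$, so the only thing to prove is a bound of the form $\|\phi\|_{L^2}^2\leq C\,B[\phi,\phi]$. The difficulty is that $f^2$ degenerates at the origin (indeed $f(0)=0$), so the pointwise bound $f^2\ge c$ fails on any neighbourhood of the vortex, and one must compensate with the Dirichlet term.

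The plan is to split the integration into an inner ball $B_R$ and its complement. Using the asymptotics $1-f(r)=O(e^{-r})$ recalled in the introduction, fix $R>0$ large enough that $f^2(r)\ge \tfrac12$ for $r\ge R$. Then
\begin{equation*}
\int_{|x|\ge R}\phi^2\ \le\ 2\int_{|x|\ge R}f^2\phi^2\ \le\ 2\int_{\R^2}f^2\phi^2\ \le\ 2B[\phi,\phi].
\end{equation*}
The remaining step is to control $\int_{B_R}\phi^2$ by the gradient of $\phi$ plus a tail outside $B_R$. I would do this with a Poincar\'e inequality on $B_{2R}$ applied to $\phi-\bar\phi$, where $\bar\phi$ is the average of $\phi$ over the annulus $B_{2R}\setminus B_R$: this yields
\begin{equation*}
\int_{B_{2R}}|\phi-\bar\phi|^2\ \le\ C_R\int_{B_{2R}}|\nabla\phi|^2,
\end{equation*}
and combining with $|\bar\phi|^2\le C_R\int_{B_{2R}\setminus B_R}\phi^2$ gives
\begin{equation*}
\int_{B_R}\phi^2\ \le\ C_R\round{\int_{B_{2R}}|\nabla\phi|^2+\int_{B_{2R}\setminus B_R}\phi^2}\ \le\ C_R'\,B[\phi,\phi],
\end{equation*}
where in the last step we have used the previous bound on $\int_{|x|\ge R}\phi^2$.

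Putting the two estimates together yields $\|\phi\|_{L^2}^2\le C\,B[\phi,\phi]$, and combined with $\|\nabla\phi\|_{L^2}^2\le B[\phi,\phi]$ this is the claimed coercivity with some explicit $\lambda>0$ depending only on $R$. I do not expect any real obstacle here: the only subtlety is choosing the right decomposition so as to exploit the two regimes of $f^2$ (nondegenerate at infinity, dominated by the Dirichlet energy on the compact core), and the Poincar\'e inequality on the bounded region $B_{2R}$ — rather than some global Poincar\'e-type inequality on $\R^2$, which fails — takes care of that cleanly.
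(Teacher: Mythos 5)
Your proof is correct, but it takes a genuinely different route from the paper's. The paper argues by contradiction and compactness: it takes a normalized sequence $\phi_j$ with $B[\phi_j,\phi_j]\to 0$, uses weak lower semicontinuity to show the weak $H^1$-limit vanishes, and then derives a contradiction from the identity $\|\phi_j\|_{H^1}^2=B[\phi_j,\phi_j]+\int_{\R^2}(1-f^2)|\phi_j|^2$ by splitting the last integral into a far region where $1-f^2$ is small and a compact core where Rellich gives $L^2$-convergence to zero. Your argument is direct and quantitative: the far-field part of $\|\phi\|_{L^2}^2$ is absorbed by $\int f^2\phi^2$ once $R$ is chosen so that $f^2\ge\tfrac12$ outside $B_R$, and the core is handled by a Poincar\'e inequality on $B_{2R}$ with the mean taken over the annulus $B_{2R}\setminus B_R$ (which is itself controlled by the far-field estimate). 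Both proofs exploit exactly the same dichotomy in the behaviour of $f$; the difference is that yours produces an explicit coercivity constant $\lambda$ in terms of $R$ and the Poincar\'e constant of $B_{2R}$, whereas the paper's is shorter to write but non-constructive. The only small points to make rigorous in your version are the justification of the Poincar\'e inequality with mean over a subset of positive measure (standard, via the usual Poincar\'e--Wirtinger inequality plus Cauchy--Schwarz to compare the two averages) and the remark that the quadratic form $\int_{\R^2}\round{|\nabla\phi|^2+f^2\phi^2}$ is the correct interpretation of $B[\phi,\phi]$ on all of $H^1(\R^2)$; neither is an obstacle.
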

We postpone the proof of Lemma \ref{coercivity simple} to Section \ref{Proof_left}. 
From the results of \S\ref{solving flat system} and \S\ref{uncoupled eq} we find a solution to \eqref{lin R4} satisfying estimates \eqref{global estimates}. Next, we proceed with the proof of the H\"older estimates \eqref{holder estimates}. The first step is to prove the following $L^\infty$ estimate in a cylinder. 

\subsubsection{$L^\infty$-estimates}\label{Linfty estimates subsub}
Let $\Phi=\mathcal{T}(\Psi)$ be a solution to \eqref{lin R4} with $\Psi\in L^\infty(\R^4)$. We claim that the following estimate holds
\begin{equation}\label{L-infty estimates}
	\norm{\Phi}_{H^{1}_{U_0}(B(p,1)\times\R^2)}\leq C\norm{\Psi}_{L^\infty(\R^4)}
\end{equation}
for every $p\in\R^2$. First, we prove that 
\begin{equation}\label{estimate-cilinder}
	\norm{\Phi}_{H^{1}_{U_0}(B(p,1)\times\R^2)}\leq C\norm{\Psi}_{L^2(B(p,1)\times\R^2)}\quad \forall p\in\R^2.
\end{equation}
Indeed, for such given $p$ we define the function $\varrho:\R^2\to\R$ given by 
\begin{equation*}
	\varrho(y)=\sqrt{1+\delta^2\abs{y-p}^2}
\end{equation*}
and let
\begin{equation*}
	\tilde\Phi:=\varrho^{-\nu}\Phi,\quad\tilde\Psi:=\varrho^{-\nu}\Psi,\quad\tilde b^\alpha=\varrho^{-\nu}b^\alpha.
\end{equation*}
The equation for $\tilde\Phi$ becomes
\begin{equation*}
	-\Delta_{t,{U_0}}\tilde\Phi-\eps^2\Delta_y \tilde\Phi+\tilde\Phi+T_{U_0}(x)\tilde\Phi=\tilde\Psi+\tilde b^\alpha\sdf{V}_\alpha+B_\delta[\Tilde\Phi]
\end{equation*}
with
\begin{equation*}
	\|B_\delta[\tilde\Phi]\|_{L^2(\R^4)}\leq C\delta\|\tilde\Phi\|_{H^{1}_{U_0}(\R^4)}
\end{equation*}
By estimates \eqref{global estimates} we obtain 
\begin{align*}
	\|\tilde\Phi\|_{H^{1}_{U_0}(\R^4)}&\leq C\|\tilde\Psi+
	B_\delta[\tilde\Phi]\|_{L^2(\R^4)}\\
	&\leq C\|\tilde\Psi\|_{L^2(\R^4)}+C\delta\|\tilde\Phi\|_{H^{1}_{U_0}(\R^4)},
\end{align*}
Therefore, choosing $\delta$ sufficiently small, we find 
\begin{equation}\label{estimates_tilda}
	\|\tilde\Phi\|_{H^{1}_{U_0}(\R^4)}\leq C\|\tilde\Psi\|_{L^2(\R^4)}.
\end{equation}
Next we observe that, if $\nu$ is sufficiently large, 
\begin{align*}
	\|\tilde\Psi\|_{L^2(\R^4)}&=\|\varrho^{-\nu}\Psi\|_{L^2(\R^4)}\\ &\leq C\|\Psi\|_{L^2(B(p,1)\times\R^2)}
\end{align*}
and 
\begin{align*}
	\|\Phi\|_{H^{1}_{U_0}(B(p,1)\times	\R^2)}&=\|\varrho^{\nu}\tilde\Phi\|_{H^{1}_{U_0}(B(p,1)\times	\R^2)}\\
	&\leq C \|\tilde\Phi\|_{H^{1}_{U_0}(\R^4)}
\end{align*}
From the last two estimates and \eqref{estimates_tilda}, by taking the supremum over $p\in\R^2$, \eqref{estimate-cilinder} immediately follows.
Now, to prove \eqref{L-infty estimates}, we write $\Phi=\Phi_0+\Phi_1$, where $\Phi_0$ is the unique bounded solution to 
\begin{equation*}
	-\Delta_{t,{U_0}}\Phi_0-\eps^2\Delta_y\Phi_0+\Phi_0=\Psi+b^\gamma\sdf{V}_\gamma
\end{equation*}
which satisfies $\norm{\Phi_0}_{L^\infty(\R^4)}\leq C\norm{\Psi}_{L^\infty(\R^4)}$. Then, $\Phi_1$ must satisfy
\begin{equation*}
	-\Delta_{t,{U_0}}\Phi_1-\eps^2\Delta_y \Phi_1+\Phi_1+T_{U_0}(t)\Phi_1=-T_{U_0}(t)\Phi_0.
\end{equation*}
Using \eqref{estimate-cilinder} we get 
\begin{align*}
	\norm{\Phi_1}_{H^{1}_{U_0}(B(p,1)\times\R^2)}&\leq C\norm{T_{U_0}(t)\Phi_0}_{L^2(B(p,1)\times\R^2)}\\
	&\leq\sup_{B(p,1)\times\R^2}|\Phi_0|\norm{T_{U_0}}_{L^2(\R^2)}\\
	&\leq C\norm{\Phi_0}_{L^\infty(\R^4)}\\
	&\leq C\norm{\Psi}_{L^\infty(\R^4)}
\end{align*}
and \eqref{L-infty estimates} follows.
\subsubsection{H\"older estimates}\label{holder subsub} Now we prove the H\"older estimates \eqref{holder estimates}.
Elliptic interior regularity yields that for every point $p=(y,t)\in\R^2\times\R^2$ it holds 
\begin{align*}
	\norm{\Phi}_{C^{2,\gamma}(B(p,1/2))}\leq C\round{\norm{\Phi}_{H^{1}_{U_0}(B(y,1)\times B(t,1))}+\norm{\Psi}_{C^{0,\gamma}(B(y,1)\times B(t,1))}}
\end{align*}
On the other hand, using the $L^\infty$ estimates found in \eqref{Linfty estimates subsub}
\begin{align*}
	\norm{\Phi}_{H^{1}_{U_0}(B(p,1))}&\leq\norm{\Phi}_{H^{1}_{U_0}(\R^2\times B(t,1))}\\ 
	&\leq C\norm{\Psi}_{L^\infty(\R^4)}\\
	&\leq C\norm{\Psi}_{C^{0,\gamma}(\R^4)}
\end{align*}
and since $C$ doesn't depend on the point $p$, taking the supremum we find \begin{equation*}
	\norm{\Phi}_{C^{2,\gamma}(\R^4)}\leq C\norm{\Psi}_{C^{0,\gamma}(\R^4)},
\end{equation*}
that is the sought estimates. The proof of Proposition \ref{Solvability lin flat space} is concluded. 

\subsection{Proof of Proposition \ref{Inversion for LUbullet - theorem}}
We now use the theory developed in \S\ref{linflatspace} to prove the invertibility for the linearised $L_{U_0}$ on $M\times\R^2$. For each point $p\in M$ we can find a local parametrization
\begin{equation*}
	Y_p:B(0,1)\subset\R^2\mapsto M\subset\R^4
\end{equation*}
onto a neighbourhood $\mathcal{U}_p$ of $p$ in $M$, so that writing 
\begin{equation*}
	g_{ij}(\xi):=\langle \partial_iY_p,\partial_jY_p\rangle = \delta_{ij}+\theta_p(\xi),\quad \xi \in B(0,1),
\end{equation*}
then we may assume that $\theta_p$ is smooth, $\theta_p(0)=0$ and 
\begin{equation*}
	|D^2\theta_p|\leq C\quad \text{in }B(0,1)
\end{equation*}
with $C$ independent of $p$. We represent the Laplace--Beltrami operator by 
\begin{align*}
		\Delta_{M}&=\frac{1}{\sqrt{\det g(\xi)}}\partial_{i}\round{\sqrt{\det g(\xi)}g^{ij}(\xi)\partial_j}\\
		&\eqqcolon \Delta_\xi+B_{p}
\end{align*}
where 
\begin{equation*}
	B_{p}=b^{ij}(\xi)\partial_{ij}+b^{j}(\xi)\partial_{j},\quad |\xi|<1.
\end{equation*}
Now, recall that we can parametrise each end $M_k$ as $\varphi_k(B_{R_0}^c)$, where $B_{R_0}^c=\braces{\xi \in \R^2\,:\,|\xi|>R_0}$ and 
\begin{equation*}
	\varphi_k(\xi)=\xi^je_j+F_k(\xi)e_3,
\end{equation*}
where $F_k$ is given by \eqref{expansion F_k}. Let now $r(\xi)=|\xi|$. According to \eqref{expansion F_k}, we find 
\begin{equation*}
	\partial_i\varphi_k=e_i+O(r^{-1})e_3
\end{equation*}
and hence the metric on the end $M_k$ expands as 
\begin{equation*}
	g^0_{ij}(\xi)=\partial_i\varphi_k\cdot\partial_j\varphi_k=\delta_{ij}+O(r^{-2})
\end{equation*}
In this way we can compute
\begin{equation}\label{laplacian on Mk}
	\Delta_M=a^{ij}_0\partial_{ij}+c^j_0\partial_j=\Delta_\xi+O(r^{-2})\partial_{ij}+O(r^{-3})\partial_{j}\quad\text{on }M_k. 
\end{equation}
thus, we conclude that the coefficients $b_{ij},{b_i}$ and their derivatives are uniformly bounded. 
\subsubsection{Proof of Proposition \ref{Inversion for LUbullet - theorem} - existence}
Let us now fix a small $\delta>0$. We have that 
\begin{equation*}
	|b_{ij}|\leq C\delta,\quad |Db_{ij}|+|Db_i|\leq C\delta.\quad \xi\in B(0,\delta).
\end{equation*}
Secondly, we choose a sequence of points $(p_j)_{j\in\mathbb{N}}\subset M$ such that, defining 
\begin{equation*}
	\V_k=Y_{p_k}\round{B(0,\delta/2)},\quad k=1,2,\dots
\end{equation*}
then $M$ is covered by the union of $\V_k$ and so that each $\V_j$ intersects at most a finite, uniform number of $\V_k$, with $k\ne j$. 
Consider now a smooth cut-off function $\eta$  such that $\eta(s)=1$ for $s<1$ and $\eta(s)=0$ for $s>2$. We define the following set of cut-off functions on $M$
\begin{equation*}
	\eta_{m}^k(y)=\eta\round{\frac{|\xi|}{m\delta}}\quad y=Y_{p_k}(\xi)
\end{equation*}
which are supported in 
\begin{equation*}
	\mathcal{U}_{k,m}\coloneqq Y_{p_k}\left(\braces{\xi\in\R^2\ :\ |\xi|\leq m\delta}\right)
\end{equation*}
and extended as $0$ outside of $\mathcal{U}_{k,m}$. Remark that $\eta^k_1\eta^k_2=\eta^k_1$ since 
\begin{equation*}
	\{\eta^k_2=1\}\supseteq\mathcal{U}_{k,1}.
\end{equation*}
Now, our choice of $\{\mathcal{V}_k\}$ and the fact that $\mathcal{V}_k\subset \{\eta^k_1=1\}$ guarantee that there is a constant $C>0$ such that 
\begin{equation}\label{bounds of eta1}
	1\leq \eta_1:=\sum_{k=1}^\infty\eta_{1}^k\leq C.
\end{equation}
Also, we remark the estimate
\begin{equation}\label{estimates derivatives cut off}
	|\nabla_{M}\eta_1^{k}|+|\Delta_{M}\eta_1^{k}|\leq C.
\end{equation}
At this point, we look for a solution of \eqref{corrected projection - bullet} of the form
\begin{equation*}
	\Phi=\Phi_0+\eta_1^{k}\Phi_k,\quad b^\alpha=\eta_1^kb_k^\alpha,\quad\alpha=1,2,
\end{equation*}
where we used Einstein summation convention. With this ansatz, \eqref{eqbul} can be written as 
\begin{align*}
	-\Delta_{t,{U_0}}\Phi-\eps^2\Delta_{M}\Phi+\Phi+T_{{U_0}}(t)\Phi=&-(\Delta_{t,{U_0}}+\eps^2\Delta_{M})\Phi_0+\Phi_0+T_{U_0}(x)\Phi_0\\
	&+\eta_1^{k}\squared{-(\Delta_{t,{U_0}}+\eps^2\Delta_{M})\Phi_k+\Phi_k+T_{U_0}(t)\Phi_k}\\&+\eps^2\mathcal{R}_{{U_0}}[\eta_1^k,\Phi_k]=\ \Psi+\eta_1^kb^\gamma_k\sdf{V}_\gamma.
\end{align*}
where $\mathcal{R}$ is as in \eqref{definition of R}.
The above equation is satisfied if we solve the system
\begin{align}
	-\Delta_{t,{U_0}}\Phi_k-\eps^2\Delta_{M}\Phi_k+\Phi_k+T_{U_0}(t)\Phi_k=-\eta_1^{-1}T_{U_0}(t)\Phi_0+b^\gamma_k\sdf{V}_\gamma\quad&\text{in }\mathcal{U}_{k,2}\times\R^2\label{inner_part_extension_to_MxR2}\\
	-\Delta_{t,{U_0}}\Phi_0-\eps^2\Delta_{M}\Phi_0+\Phi_0=\Psi-\eps^2\mathcal{R}_{U_0}[\eta_1^k,\Phi_k]
	\quad&\text{in }M\times\R^2.\label{outer_part_extension_to_MxR2}
\end{align}
To this end, we need the following result, which will be proved in Section \ref{Proof_left}.
\begin{lemma}\label{lemma1}
	There exists a constant $C>0$ independent on $\eps$ such that for every pair $H\in C^{0,\gamma}(M\times\R^2)$ there exists a solution $\Phi=\Phi(H)$ to the equation 
	\begin{equation*}
		-\Delta_{t,{U_0}}\Phi-\eps^2\Delta_{M}\Phi+\Phi=H\quad \text{in }M\times\R^2
	\end{equation*}
defining a linear operator in $H$, such that
\begin{equation*}
\norm{\Phi}_{C^{2,\gamma}(M\times\R^2)}\leq C\norm{H}_{C^{0,\gamma}(M\times\R^2)}.
\end{equation*}
\end{lemma}

Assume now that $\boldsymbol{\Phi}=(\Phi_k)_{k\in\mathbb{N}}\in\ell^\infty\round{C^{2,\gamma}(\R^4)}$. Using Lemma \ref{lemma1} and \eqref{estimates derivatives cut off} we can find a solution $\Phi_0=\Phi_0^1(\Psi)+\Phi_0^2(\boldsymbol\Phi)$ to \eqref{outer_part_extension_to_MxR2}, with
\begin{align*}
	-\Delta_{t,{U_0}}\Phi_0^1-\eps^2\Delta_{M}\Phi_0^1+\Phi_0^1&=\Psi\\
	-\Delta_{t,{U_0}}\Phi_0^2-\eps^2\Delta_{M}\Phi_0^2+\Phi_0^2&=-\eps^2\mathcal{R}_{U_0}[\eta_1^k,\Phi_k]
\end{align*} 
and satisfying
\begin{align*}
	\|\Phi_0\|_{C^{2,\gamma}(M\times\R^2)}&\leq \|\Psi\|_{C^{0,\gamma}(M\times\R^2)}+\eps^2\|\mathcal{R}_{U_0}[\eta_1^k,\Phi_k]\|_{C^{0,\gamma}(M\times\R^2)}\\
	&\leq\|\Psi\|_{C^{0,\gamma}(M\times\R^2)}+\eps^2\|\boldsymbol{\Phi}\|_{\ell^\infty\round{C^{2,\gamma}(\R^4)}}
\end{align*}
where we used \eqref{estimates derivatives cut off}.
We can plug such $\Phi_0$ into \eqref{inner_part_extension_to_MxR2},
writing it in coordinates
\begin{equation*}
	-\Delta_{t,{U_0}}\Phi_k-\eps^2\Delta_{\xi}\Phi_k+B_{p_k}\Phi_k+T_{U_0}(t)\Phi_k=-\eta^{-1}_1T_{U_0}(t)\Phi_0+b^\gamma_k\sdf{V}_\gamma\quad \text{in }B(0,2\delta)\times\R^2
\end{equation*}
and then extend it to the whole space using $\eta_2^k$
\begin{equation}\label{inner_in_coordinates}
	-\Delta_{t,{U_0}}\Phi_k-\eps^2\Delta_{\xi}\Phi_k-\eta_2^kB_{p_k}\Phi_k+T_{U_0}(t)\Phi_k=-\eta^{-1}_1\eta_2^kT_{U_0}(t)\Phi_0+b^\gamma_k\sdf{V}_\gamma\quad \text{in }\R^4.
\end{equation}

For every $k$, we set 
\begin{equation}
	b_k^\alpha(y)=-\frac{1}{\int_{\R^2}|\sdf{V}_\alpha(t)|^2dt}\int_{\R^2}\round{-\eta^{-1}_1\eta_2^kT_{U_0}\Phi_0+\eta_2^kB_{p_k}\Phi_k}\cdot\sdf{V}_\alpha(t)dt,\quad \alpha=1,2.
\end{equation}
and apply the inversion operator $\mathcal{T}$ of Proposition \ref{Solvability lin flat space} to get 
\begin{equation*}
	\Phi_k=\mathcal{T}(-\eta^{-1}_1\eta_2^kT_{U_0}\Phi_0+\eta_2^kB_{p_k}\Phi_k)
\end{equation*}
Then we can formulate \eqref{inner_in_coordinates} as 
\begin{equation}\label{fixed point problem}
	\boldsymbol{\Phi}+\mathcal{S}(\boldsymbol{\Phi})=\mathcal{W}
\end{equation}
where $\mathcal{S}$ is defined as 
\begin{equation*}
	\mathcal{S}(\boldsymbol{\Phi})_k
:=\mathcal{T}(\eta^{-1}_1\eta_2^kT_{U_0}\Phi_0^2(\boldsymbol\Phi)-\eta_2^kB_{p_k}\Phi_k).
\end{equation*}
and
\begin{equation*}
	\mathcal{W}_k=-\mathcal{T}(\eta^{-1}_1\eta_2^kT_{U_0}\Phi_0^1(\Psi)).
\end{equation*}
We readily check that
\begin{align*}
	\norm{\mathcal{S}(\boldsymbol\Phi)_k}_{C^{2,\gamma}(\R^4)}\leq &\ C\norm{\eta_2^kB_{p_k}(\boldsymbol\Phi)}_{C^{0,\gamma}(\R^4)}\\
	&+C\norm{\eta^{-1}_1\eta_2^kT_{U_0}\Phi_0^2(\boldsymbol{\Phi})}_{C^{0,\gamma}(\R^4)}\\
	\leq &\  C(\delta+\eps^2) \norm{\boldsymbol\Phi}_{\ell^\infty\round{C^{2,\gamma}(\R^4)}}.
\end{align*}
Taking the supremum over $k$ and $\delta,\eps$ sufficiently small we find a unique solution to \eqref{fixed point problem}, from which we then find $\Phi=\Phi_0+\eta_1^{k}\Phi_k$. Moreover, looking back at how the solution was built we infer that 
\begin{equation*}
	\norm{\Phi}_{C^{2,\gamma}(M\times\R^2)}\leq C\norm{\Psi}_{C^{0,\gamma}(M\times\R^2)}.
\end{equation*}
and the proof is concluded. 

\subsubsection{Proof of Proposition \ref{Inversion for LUbullet - theorem} - weighted estimates}\label{Proof of Proposition invlubul we}

Recall that we defined, for $\mu\geq0$ and $0<\gamma,\sigma<1$, the norm
\begin{equation*}
	\norm{\Phi}_{C^{2,\gamma}_{\mu,\sigma}(M\times\R^2)}=\|\varrho^{-1}\Phi\|_{C^{2,\gamma}(M\times\R^2)}.
\end{equation*}
where
\begin{equation*}
	\varrho(y,t)=r(y)^{-\mu} e^{-\sigma|t|}.
\end{equation*}
Let $\Phi=\varrho\tilde\Phi$. In terms of $\tilde\Phi$, equation \eqref{corrected projection - bullet} reads

\begin{equation}\label{perturbed operator holder estimates}
	-\Delta_{U_0}\tilde\Phi+\varrho^{-1}\mathcal{R}_{U_0}[\varrho,\tilde\Phi]+\tilde\Phi+T_{U_0}(t)\tilde\Phi=\tilde\Psi+\tilde b^\gamma\sdf{V}_\gamma
\end{equation}
where $\mathcal{R}_{U_0}$ is as in \eqref{definition of R}, and 
\begin{equation*}
	\tilde b^\gamma=\varrho^{-1}b^\gamma,\qquad\tilde\Psi=\varrho^{-1}\tilde\Psi.
\end{equation*}
Remark that, for some $C>0$,
\begin{equation*}
	\norm{\varrho^{-1}D^2\varrho}_{C^{0,\gamma}(M\times\R^2)}+\norm{\varrho^{-1}D\varrho}_{C^{0,\gamma}(M\times\R^2)}\leq C(\eps+\sigma)
\end{equation*}
and hence
\begin{equation*}
	\|\mathcal{R}_{U_0}[\varrho,\tilde\Phi]\|_{C^{0,\gamma}(M\times\R^2)}\leq C(\eps+\sigma)\|\tilde\Phi\|_{C^{2,\gamma}(M\times\R^2)}.
\end{equation*}
Now, observe that  up to reducing $\sigma$ and $\eps$,
we can solve \eqref{perturbed operator holder estimates} by fixed point arguments with the inversion theory provided by Proposition \ref{Inversion for LUbullet - theorem}, with the corresponding estimates for $\tilde\Phi$, namely
\begin{equation}\label{estimate1 holder}
	\|\tilde\Phi\|_{C^{2,\gamma}(M\times\R^2)}\leq C\|\tilde\Psi\|_{C^{0,\gamma}(M\times\R^2)}
\end{equation}
on the other hand, 
\begin{equation}\label{estimate2 holder}
	\norm{\varrho^{-1}D^2\Phi}_{C^{0,\gamma}(M\times\R^2)}+\norm{\varrho^{-1}D\Phi}_{C^{0,\gamma}(M\times\R^2)}+\norm{\varrho^{-1}\Phi}_{C^{0,\gamma}(M\times\R^2)}\leq C\|\tilde\Phi\|_{C^{2,\gamma}(M\times\R^2)}
\end{equation}
thus, putting together \eqref{estimate1 holder} and \eqref{estimate2 holder}, we obtain the sought estimates and the proof is complete.

\section{Invertibility theory for the Jacobi operator}\label{invertjacob}

In this section we prove Proposition \ref{invertibility Jacobi}, namely we solve the system
\begin{equation}\label{jacobi system}
		\begin{cases}
		\Delta_Mh^1+|\A_M|^2h^1=f^1-\sum_{j=0}^3c^j|\A_M|^2\hat{z}_j,\\
		\Delta_Mh^2=f^2-c^4|\A_M|^2
	\end{cases}
\end{equation}
for a right-hand side $f=(f^1,f^2)^T$ of class $C^{0,\gamma}_{4}(M)$. We recall the precise statement.

\medskip

\noindent{\bf Proposition \ref{invertibility Jacobi}}.\ \emph{Let $f\in C^{0,\gamma}_{4}(M,\R^2)$.
Then there exist constants $c^0,\dots,c^4$ such that system \eqref{jacobi system} admits a solution $h=\mathcal{H}(f)$ satisfying 
\begin{equation*}
	\|h\|_*\leq C\|f\|_{C^{0,\gamma}_{4}(M)}.
\end{equation*}}

We begin with an invertibility theory for the Laplace-Beltrami operator in the space 
\begin{equation}\label{definition of H}
	H=\braces{\psi\in H^1_{\text{loc}}(M)\ :\ \|\psi\|_H<\infty,\ \int_M|\A_M|^2\psi=0}
\end{equation}
where 
\begin{equation*}
	\|\psi\|_{H}^{2}\coloneqq \int_{M}|\nabla \psi|^{2}+\int_{M}\left|\A_{M}\right|^{2}|\psi|^{2}.
\end{equation*}
and for a right-hand side having mean zero. The following result holds.
\begin{lemma}\label{invertibility laplace}
	Let $g$ be a function defined on $M$ such that 
	\begin{equation}\label{mean zero for g}
		\int_Mg=0
	\end{equation}
	and $\||\A_M|^{-1}g\|_{L^2(M)}<+\infty$. Then there exists a solution $\psi\in H\cap L^{\infty}(M)$ to
	\begin{equation*}
		\Delta_M\psi=g\quad\text{on }M
	\end{equation*}
	such that
	\begin{equation*}
		\|\psi\|_H+\|\psi\|_\infty\leq \||\A_M|^{-1}g\|_{L^2(M)}.
	\end{equation*}
	Moreover, if $\|g\|_{C^{0,\gamma}_{4}(M)}<\infty$, then such $\psi$ satisfies also 
	\begin{equation}\label{laplastimate}
		\|\psi\|_*\coloneqq \|\psi\|_\infty+\|D_M\psi\|_{C^{0,\gamma}_2(M)}+\|D^2_M\psi\|_{C^{0,\gamma}_{4}(M)}\leq C\|g\|_{C^{0,\gamma}_{4}(M)}
	\end{equation}
	for some $C>0$
\end{lemma}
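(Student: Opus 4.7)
The strategy is variational, followed by asymptotic analysis at the ends of $M$. On the Hilbert space $H$ defined in \eqref{definition of H} I would consider the bilinear form $B[\psi,\varphi]=\int_M\nabla\psi\cdot\nabla\varphi$ and the linear functional $\ell(\varphi)=-\int_M g\varphi$. The latter is continuous on $H$ by Cauchy--Schwarz:
\[
|\ell(\varphi)|\le \||\A_M|^{-1}g\|_{L^2}\,\bigl\|\,|\A_M|\varphi\bigr\|_{L^2}\le \||\A_M|^{-1}g\|_{L^2}\,\|\varphi\|_H.
\]
The main analytic input is the weighted Poincar\'e inequality
\[
\int_M|\A_M|^2|\varphi|^2\le C\int_M|\nabla\varphi|^2\qquad\forall\,\varphi\in H,
\]
which I would prove by contradiction: a normalized sequence $\varphi_n$ with $\int|\A_M|^2|\varphi_n|^2=1$ and $\int|\nabla\varphi_n|^2\to 0$ is bounded in $H$ and, after a subsequence, converges weakly in $H$ and locally in $H^1_{\mathrm{loc}}(M)$ to a function with vanishing gradient, hence a constant. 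The constraint $\int_M|\A_M|^2\varphi_n=0$ passes to the limit (using $|\A_M|^2\in L^1(M)$ from finite total curvature), forcing the constant to be zero, while the decay $|\A_M|^2=O(r^{-4})$ at infinity yields a Rellich-type compact embedding $H\hookrightarrow L^2(|\A_M|^2\,dM)$ that contradicts $\int|\A_M|^2|\varphi_n|^2=1$. Coercivity $B[\varphi,\varphi]\ge c\|\varphi\|_H^2$ is then immediate.

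Lax--Milgram now produces a unique $\psi\in H$ with $B[\psi,\varphi]=\ell(\varphi)$ for all $\varphi\in H$, and the bound $\|\psi\|_H\le C\||\A_M|^{-1}g\|_{L^2}$ is automatic. To show that $\psi$ is a distributional solution of $\Delta_M\psi=g$ on all of $M$, I would decompose any $\varphi\in C_c^\infty(M)$ as $\varphi=\varphi_\perp+\lambda$ with $\lambda$ constant and $\varphi_\perp\in H$: the variational identity handles $\varphi_\perp$, and the constant $\lambda$ contributes nothing precisely because $\int_M g=0$. For the $L^\infty$ bound I would combine interior elliptic regularity (Moser iteration) on the compact core $\{r\le R_0\}$ with an endwise analysis: on each $M_k$, rewrite the equation in the asymptotically Euclidean coordinates of \eqref{laplacian on Mk}, represent $\psi$ via the Newtonian potential on $\R^2$ plus a perturbation from the curved metric, and use $\int_M g=0$ for cancellation at infinity to obtain a uniform bound matching $\||\A_M|^{-1}g\|_{L^2}$.

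The weighted H\"older estimate \eqref{laplastimate} is the step I expect to be the main obstacle. It demands not only boundedness of $\psi$ but the sharp decays $|D\psi|=O(r^{-2})$ and $|D^2\psi|=O(r^{-4})$, together with matching H\"older seminorms. On each end, \eqref{laplacian on Mk} gives $\Delta_M=\Delta_\xi+O(r^{-2})\partial^2+O(r^{-3})\partial$, and with $g\in C^{0,\gamma}_4$ meaning $g=O(r^{-4})$, the cancellation $\int_M g=0$ is crucial to kill the $\log r$ and $O(r^{-1})$ terms that would otherwise appear in the Newtonian potential $\tfrac{1}{2\pi}\log|\cdot|\ast g$. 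I would build $\psi$ at infinity via an explicit expansion whose leading term is this convolution, iterate once to absorb the geometric perturbation, and close the estimate by Schauder theory on rescaled annuli $B(y,|y|/2)\subset M_k$, converting pointwise decay into $C^{0,\gamma}_2$ and $C^{0,\gamma}_4$ control on $D\psi$ and $D^2\psi$. Keeping the constants uniform and preventing the re-emergence of slow decay under iteration of the error terms is the delicate part.
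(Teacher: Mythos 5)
Your variational core coincides with the paper's proof: the same bilinear form on $H$, the same continuity bound $|\ell(\varphi)|\le\||\A_M|^{-1}g\|_{L^2}\|\varphi\|_H$, the same compactness-by-contradiction proof of the weighted Poincar\'e inequality (the paper splits $\int_M|\A_M|^2|\phi_n-a|$ over $B_R$ and $B_R^c$ and uses $\int_{B_R^c}|\A_M|^2=O(1/R)$, exactly as you indicate), and the same recovery of general test functions by writing $\varphi=\tilde\varphi+\lambda$ and invoking $\int_Mg=0$. Where you diverge is the endwise analysis: the paper performs a Kelvin transform $\tilde\psi(x)=\psi(x/|x|^2)$, turning the exterior problem on each end into a local elliptic problem on $B(0,\delta)$ with right-hand side $|x|^{-4}\tilde g$, and then reads off both the $L^\infty$ bound (via $W^{2,2}\hookrightarrow C^{0,\gamma}$) and the weighted H\"older estimates \eqref{laplastimate} from interior elliptic estimates, undoing the inversion at the end. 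Your route via Newtonian potentials and Schauder theory on rescaled annuli is a legitimate alternative, but the Kelvin transform is cleaner here because it converts the delicate decay rates $r^{-2}$, $r^{-4}$ for $D\psi$, $D^2\psi$ into plain interior Schauder estimates in one step.

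There is, however, one concrete flaw in your sketch of the hard step. You assert that the global condition $\int_Mg=0$ ``kills the $\log r$ and $O(r^{-1})$ terms'' in the Newtonian potential on each end. It does not: $\int_Mg=0$ is a condition on the sum over all ends plus the compact core, while the Newtonian potential of $g$ restricted to a single end $M_k$ carries a logarithmic term with coefficient $\tfrac{1}{2\pi}\int_{M_k}g$, which is generically nonzero. If that term survived, $D\psi$ would only be $O(r^{-1})$ and \eqref{laplastimate} would fail. The correct mechanism is different: the variational solution has finite Dirichlet energy, and $\log r$ has infinite Dirichlet energy on a planar end, so the harmonic part of $\psi$ on each end must cancel the logarithm of the local Newtonian potential. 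You need to insert this (or an equivalent removable-singularity argument after inversion, as the paper implicitly uses) before your expansion at infinity has the decay you claim; once it is in place, the rest of your iteration-plus-Schauder scheme should close.
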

\begin{proof}
	We consider the weak formulation of the problem
	\begin{equation}\label{weak formulation}
		B[\psi,\phi]:=\int_M\nabla\phi\cdot\nabla\psi=\int_Mg\phi\eqqcolon \langle\ell_g,\phi\rangle,\quad \forall\phi\in C^{\infty}_c(M).
	\end{equation}
	We claim that $B$ is coercive on $H$, namely
	\begin{equation}\label{coercivity lb}
		B[\phi,\phi]\geq c\|\phi\|_H,\quad \forall\phi\in H
	\end{equation}
	for some $c>0$. Let us accept this for a moment and observe that 
	\begin{equation*}
		|\langle\ell_g,\phi\rangle|\leq\||\A_M|^{-1}g\|_{L^2(M)}\|\phi\|_H
	\end{equation*}
	hence we have the continuity of $\ell_g$ on $H$ and, by Riesz theorem, the existence of a solution $\psi\in H$ to 
	\begin{equation*}
		B[\psi,\tilde\phi]=\langle\ell_g,\tilde\phi\rangle, \quad \forall\tilde\phi\in H
	\end{equation*}
	satisfying 
	\begin{equation}\label{first estimates lb}
		\|\psi\|_H\leq C\||\A_M|^{-1}g\|_{L^2(M)}.
	\end{equation} 
	Now, to solve the actual weak problem \eqref{weak formulation} we observe that a $\phi\in C^\infty_c(M)$ can be written as $\phi=\tilde\phi+\alpha$ for some constant $\alpha$ and $\tilde\phi\in H$. Condition \eqref{mean zero for g} implies then 
	\begin{equation*}
		B[\psi,\phi]=B[\psi,\tilde\phi]=\langle\ell_g,\tilde\phi\rangle=\langle\ell_g,\phi\rangle,\quad \forall\phi\in C^{\infty}_c(M),
	\end{equation*}
	that is, $\psi$ is the sought solution. We now prove the coercivity of $B$. Observe that \eqref{coercivity lb} is satisfied if 
	\begin{equation*}
		\int_{M}\left|\nabla\phi\right|^{2}\geq c\int_{M}|\A_{M}|^{2}\left|\phi\right|^{2}\quad\forall\phi\in H
	\end{equation*}
	holds for some $c>0$. If not, there would be a sequence $\{\phi_n\}\subset H$ satisfying 
	\begin{gather}
		\int_M|\nabla \phi_n|^2\to0,\quad \text{as }n\to+\infty\label{gradient 2 0}\\
		\int_M|\A_M|^2|\phi_n|^2=1,\quad \forall n\geq0.\label{norm at 1}
	\end{gather}
	Notice that \eqref{gradient 2 0} and \eqref{norm at 1} imply that, passing to a subsequence, $\phi_n\to a$ in $L^2_{\text{loc}}(M)$ for some $a\in \R$. We split
	\begin{equation}\label{splitt}
		\int_M|\A_M|^2|\phi_n-a|=\int_{B_R\cap M}|\A_M|^2|\phi_n-a|+\int_{B_R^c\cap M}|\A_M|^2|\phi_n-a|
	\end{equation}
	where $R>0$ is sufficiently big. We claim that the left-hand side of \eqref{splitt} converges to 0. First we notice that 
	\begin{equation*}
		\int_{B_R\cap M}|\A_M|^2|\phi_n-a|\to 0\quad \text{as }n\to+\infty
	\end{equation*}
	because of $L^2_{\text{loc}}$-convergence and Cauchy--Schwartz inequality. Secondly, by triangle inequality,
	\begin{equation*}
		\int_{B_{R}^{c}\cap M}\left|\A_{M}\right|^{2}\left|\phi_{n}-a\right|\leq\int_{B_{R}^{c}\cap M}\left|\A_{M}\right|^{2}\left|\phi_{n}\right|+\int_{B_{R}^{c}\cap M}\left|\A_{M}\right|^{2}\left|a\right|.
	\end{equation*}
	We use that 
	\begin{equation*}
		\int_{B_{R}^{c}\cap M}\left|\A_{M}\right|^{2}\left|\phi_{n}\right|\leq\underbrace{\left(\int_{B_{R}^{c}\cap M}\left|\A_{M}\right|^{2}\left|\phi_{n}\right|^{2}\right)^{\frac{1}{2}}}_{\leq1}\underbrace{\left(\int_{B_{R}^{c}\cap M}\left|\A_{M}\right|^{2}\right)^{\frac{1}{2}}}_{O(1/R)}=O\left(\frac{1}{R}\right)
	\end{equation*}
	and also
	\begin{equation*}
		\int_{B_{R}^{c}\cap M}\left|\A_{M}\right|^{2}\left|a\right|=O\left(\frac{1}{R}\right)
	\end{equation*}
	to conclude that
	\begin{equation*}
		\int_{ M}|\A_M|^2|\phi_n-a|\to 0\quad \text{as }n\to+\infty
	\end{equation*}
	and hence 
	\begin{equation*}
		\int_{M}\left|\A_{M}\right|^{2}\phi_{n}\to\int_{M}\left|\A_{M}\right|^{2}a
	\end{equation*}
	but $\phi_n\in H$ implies $\int_{M}\left|\A_{M}\right|^{2}\phi_{n}=0$ therefore it must be $a=0$. We will show that this is incompatible with \eqref{gradient 2 0} and \eqref{norm at 1}. Recall that outside of a large cylinder the manifold $M$ decomposes into its ends $M_k$, each of which resembles a plane. We claim that 
	\begin{equation}\label{conv end 0}
		\int_{M_k}|\A_M|^2|\phi_n|^2\to 0\quad \forall k=1,\dots, m
	\end{equation}
	as $n\to\infty$. This, together with the fact that $\phi_n\to 0$ in $L^2_{\mathrm{loc}}(M)$ contradicts \eqref{norm at 1}, thus proving \eqref{coercivity lb}.
	
	\medskip
	
	Let us fix $k$ and let $\delta>0$. Let $\xi\colon B_{1/\delta}^c(0)\subset\R^2\to M_k$ be the coordinates defined by \eqref{expansion F_k}.  Since the end $M_k$ is planar, up to choosing $\delta$ sufficiently small we get 
	\begin{equation*}
		\int_{B^c_{1/\delta}}|\nabla \phi_n(\xi)|^2d\xi\to 0\quad \text{as }n\to\infty,
	\end{equation*}
	and, recalling that $|\A_M|^2$ decays as $r^{-4}$,
	\begin{equation*}
		\int_{B^c_{1/\delta}}|\xi|^{-4}| \phi_n(\xi)|^2d\xi\leq C
	\end{equation*}
	for some $C$ uniform in $n$. Consider now the Kelvin transform of $\phi_n$
	\begin{equation*}
		\widetilde\phi_n(y)\coloneqq \phi_n(y/|y|^2),\quad y\in B_\delta(0).
	\end{equation*}
	A direct calculation with the change of variable $\xi=y/|y|^2$ shows that 
	\begin{equation*}
		\int_{B_\delta}|\nabla \widetilde\phi_n(y)|^2dy=\int_{B^c_{1/\delta}}|\nabla \phi_n(\xi)|^2d\xi\to 0
	\end{equation*}
	and 
	\begin{equation*}
		\int_{B_\delta}|\widetilde\phi_n(y)|^2dy=\int_{B^c_{1/\delta}}|\xi|^{-4}| \phi_n(\xi)|^2d\xi\leq C
	\end{equation*}
	for some constant $C>0$ independent on $n$. From this argument follows that $\widetilde\phi_n\to \widetilde a$ in $L^2(B_\delta)$ for some constant $\widetilde a\in \R$. On the other hand, on every ball $B'\subset B_\delta$ not containing the origin we already know, from the fact that $\phi_n\to 0$ in $L^2_{\mathrm{loc}}$, that $\widetilde\phi_n\to 0$, thus $\widetilde a=0$. This implies that 
	\begin{equation}\label{conv phitilde}
		\|\phi_n(1+|x|^2)^{-1}\|^2_{L^2(B_{1/\delta}^c)}\leq C\|\widetilde\phi_n\|^2_{L^2(B_{\delta})}\to 0.
	\end{equation}
	The claim is proved.

\medskip

	The only thing left to do is to prove that the solution $\psi$ found satisfies 
	\begin{equation}\label{estimatepsi with AM}
		\|\psi\|_\infty\leq C\||\A_M|^{-1}g\|_{L^2(M)}.
	\end{equation}
	The claim is easily proved on compact sets by using local $L^2$-elliptic estimates and Sobolev embedding, along with \eqref{first estimates lb}, so we go on proving it on the ends of $M$. Recall that $|\A_M|=O(r^{-2})$ for $r$ large and that on each end $M_k$  the expression \eqref{laplacian on Mk} for $\Delta_M$ holds, namely
	\begin{equation*}
		\Delta_M=\Delta_\xi+b^{ij}\partial_{ij}+b^{j}\partial_{j}\quad\text{on }M_k. 
	\end{equation*}
	where $\xi$ are the Euclidean coordinates mapped on the surface via the chart $\varphi_k$ and
	\begin{equation*}
		b^{ij}=O(r^{-2}),\quad b^{j}=O(r^{-3})\quad\text{for $r=|\xi|$ large.}
	\end{equation*}
	Since the manifold $M$ resembles a plane on its ends, we use again the Kelvin transform on $\R^2$. Let 
	\begin{equation*}
		\tilde\psi(x)=\psi(x/|x|^2)
	\end{equation*}
	where $x\in B(0,\delta)$ for some $\delta$ sufficiently small. It's straightforward to verify the existence of coefficients $c^{ij}$ and $c^j$, defined in $B(0,\delta)$, such that 
	\begin{align}
		(\Delta_x+c^{ij}(x)\partial_{ij}+c^j(x)\partial_j)\tilde\psi&=\frac{1}{|x|^4}(\Delta_\xi+\tilde{b}^{ij}\partial_{ij}+\tilde b^{j}\partial_{j})\psi \round{\frac{x}{|x|^2}}\label{kelvintransformed}\\
		&=\frac{1}{|x|^4}g\round{\frac{x}{|x|^2}}\nonumber\\
		&=\frac{1}{|x|^4}\tilde g(x)\nonumber
	\end{align}
	and such that 
	\begin{equation*}
		c^{ij}=O(|x|^2),\quad c^j=O(|x|),\quad\text{for $|x|$ small}.
	\end{equation*}
	Given that $\tilde\psi$ solves the above equation and using the Sobolev embedding $W^{2,2}(B_\delta)\subset C^{0,\gamma}(B_\delta)$, by elliptic estimates,
	\begin{equation*}
		\|\tilde\psi\|_{C^{0,\gamma}(B_\delta)}\leq C\round{\||x|^{-4}\tilde g\|_{L^2(B_{2\delta})}+\|\tilde\psi\|_{L^2(B_{2\delta})}}.
	\end{equation*}
	A simple change of variable shows then that 
	\begin{gather*}
		\int_{B_{2\delta}}|x|^{-8}\abs{\tilde g(x)}^2dx=\int_{\{r>1/2\delta\}}|\xi|^4|g(\xi)|^2d\xi,\\
		\int_{B_{2\delta}}|\tilde \psi(x)|^2dx=\int_{\{r>1/2\delta\}}|\xi|^{-4}|\psi(\xi)|^2d\xi
	\end{gather*}
	and also, for $|p|>1/\delta$
	\begin{align*}
		[\psi]_{\gamma,B(p,1)}&=\sup_{\xi,\eta\in B(p,1)}\frac{|\psi(\xi)-\psi(\eta)|}{|\xi-\eta|^{\gamma}}\\
		&=\sup_{\xi,\eta\in B(p,1)}\frac{\left|\tilde{\psi}\left(\frac{\xi}{|\xi|^{2}}\right)-\tilde{\psi}\left(\frac{\eta}{|\eta|^{2}}\right)\right|}{|\xi-\eta|^{\gamma}}\\
		&\leq C(\delta)[\tilde\psi]_{\gamma,B(p^{-1},1)}
	\end{align*}
	where we used that, for $|\xi|,|\eta|>1/\delta-1$,
	\begin{equation*}
		\left|\frac{\xi}{|\xi|^{2}}-\frac{\eta}{|\eta|^{2}}\right|\leq C(\delta)|\xi-\eta|.
	\end{equation*} 
	Hence, we find
	\begin{equation*}
		\|\psi\|_{C^{0,\gamma}(r>1/\delta)}\leq C\round{\|r^2 g\|_{L^2(r>1/2\delta)}+\|r^{-2}\psi\|_{L^2(r>1/2\delta))}}.
	\end{equation*}
	Thus
	\begin{equation*}
		\|\psi\|_{H}+\|\psi\|_{\infty}\leq C\||\A_{M}|^{-1}g\|_{L^{2}(M)}
	\end{equation*}
	and the first part of the proof is concluded. Next we consider the case where $\|g\|_{C^{0,\gamma}_{4}(M)}<\infty$. We claim that
	\begin{equation*}
		\|\psi\|_*\leq C\round{\|g\|_{C^{0,\gamma}_{4}(M)}+\|\psi\|_\infty}
	\end{equation*} 
	Again, by local elliptic estimates we only need to prove the desired behaviour on the ends. Consider the Kelvin transformed formulation of the equation \eqref{kelvintransformed}. By elliptic regularity, 
	\begin{equation*}
		\|\tilde\psi\|_{C^{2,\gamma}(B_\delta)}\leq C\round{\||x|^{-4}\tilde g\|_{C^{0,\gamma}(B_{2\delta})}+\|\tilde\psi\|_{L^\infty(B_{2\delta})}}.
	\end{equation*}
	and by inversion of the Kelvin transform we find 
	\begin{equation*}
		\|\psi\|_{C^{0,\gamma}(r>1/\delta)}+\|D_M\psi\|_{C_2^{0,\gamma}(r>1/\delta)}+\|D^2_M\psi\|_{C_4^{0,\gamma}(r>1/\delta)}\leq C\|g\|_{C^{0,\gamma}_4(r>1/2\delta)}
	\end{equation*}
	where we also used \eqref{estimatepsi with AM} and the fact that 
	\begin{equation*}
		\||\A_{M}|^{-1}g\|_{L^{2}(M)}\leq C\|g\|_{C^{0,\gamma}_4(r>1/2\delta)}
	\end{equation*}
	and the proof is concluded.
\end{proof}
The invertibility theory for the Laplace--Beltrami operator just developed, together with the decay assumption $f\in C^{0,\gamma}_{4}(M)$, gives us a solution for the second component of system \eqref{jacobi system} satisfying the predicted estimate, simply by setting 
\begin{equation*}
	c^4=\frac{1}{\int_M|\A_M|^2}\int_M f^2.
\end{equation*} 
Moreover, the same theory allows us to solve the first component, in the sense of the following result.

\begin{lemma}\label{inversion jacobi}
	Given $f\in C^{0,\gamma}_{4}(M)$ there exists a unique solution $h$ to the problem
	\begin{equation}\label{jacobi with corrections}
		\Delta_Mh+|\A_M|^2h=f-c^j|\A_M|^2\hat{z}_j\quad\text{on }M,
	\end{equation}
	where $c^j$s are constants defined as 
	\begin{equation}\label{cjs}
		c^j=\frac{1}{\int_M|\A_M|^2\hat{z}^2_j}\int_M f\hat{z}_j.
	\end{equation}
	 Such solution satisfies the estimate
	 \begin{equation*}
	 	\|h\|_*\leq C\|f\|_{C^{0,\gamma}_{4}(M)}
	 \end{equation*}
	 for some $C>0$.
\end{lemma}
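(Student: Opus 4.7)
The plan is to adapt the variational strategy of Lemma \ref{invertibility laplace} to the Jacobi operator $J_M = \Delta_M + |\A_M|^2$. First observe that the choice \eqref{cjs} of the constants $c^j$, combined with the orthonormalization $\int_M |\A_M|^2 \hat{z}_i \hat{z}_j = \delta_{ij}$, makes the modified right-hand side $g := f - c^j |\A_M|^2 \hat{z}_j$ orthogonal to every bounded Jacobi field:
$$\int_M g\, \hat{z}_k = \int_M f\, \hat{z}_k - c^k \int_M |\A_M|^2 \hat{z}_k^2 = 0, \quad k=0,\ldots,3.$$
This is the necessary compatibility condition to invert $J_M$ modulo its four-dimensional $L^\infty$-kernel. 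I would then look for a weak solution $h$ in the closed subspace
$$\mathcal{K} := \braces{\phi \in H \,:\, \int_M |\A_M|^2 \phi\, \hat{z}_j = 0 \text{ for } j=0,\ldots,3}$$
of the Hilbert space $H$ from \eqref{definition of H}, associated to the bilinear form $B_J[\phi,\psi] = \int_M \nabla\phi\cdot\nabla\psi - \int_M |\A_M|^2 \phi \psi$.

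The crucial step is to show that $B_J$ is invertible in the appropriate Fredholm sense on $\mathcal{K}$, yielding a unique $h \in \mathcal{K}$ satisfying $\|h\|_H \leq C \| |\A_M|^{-1} g\|_{L^2(M)}$. Unlike the Laplace-Beltrami form of Lemma \ref{invertibility laplace}, $B_J$ is not positive semidefinite in general: minimal surfaces of the type under consideration (e.g. the catenoid) have nonzero but finite Morse index by Fischer-Colbrie. The argument proceeds by a contradiction/compactness scheme: assuming failure, extract a sequence $\phi_n \in \mathcal{K}$ with $\|\phi_n\|_H = 1$ and $B_J[\phi_n,\phi_n] \to 0$; pass to a weak-$H$ limit $\phi_\infty$ converging strongly in $L^2_{\text{loc}}$; then use the Kelvin-transformed equation on each end (as in Lemma \ref{invertibility laplace}) to show $\phi_\infty$ is a bounded weak solution of $J_M[\phi_\infty]=0$; by the non-degeneracy assumption \eqref{nondeg}, $\phi_\infty \in \mathrm{span}\{\hat{z}_0,\ldots,\hat{z}_3\}$; finally, the constraint $\int_M |\A_M|^2 \phi_n \hat{z}_j = 0$ passes to the limit thanks to the decay $|\A_M|^2 = O(r^{-4})$, forcing $\phi_\infty = 0$. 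A quantitative upgrade to strong weighted-$L^2$ convergence, of the splitting type used in Lemma \ref{invertibility laplace}, contradicts the normalization. The main obstacle is precisely this coercivity: controlling the finite-dimensional negative subspace of $B_J$ and isolating it from the coercive part of $\mathcal{K}$ is the delicate point, and requires essentially the full strength of the non-degeneracy assumption plus finite total curvature.

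Once existence in $\mathcal{K}$ is established with the $H$-estimate, the $\|\cdot\|_*$-bound follows by the same bootstrap as in Lemma \ref{invertibility laplace}: interior elliptic regularity on compact sets, together with a Kelvin transform recasting the problem at each end $M_k$ as a uniformly elliptic equation on a small punctured disc whose right-hand side is controlled by $\|g\|_{C^{0,\gamma}_4(M)}$. Since $|\A_M|^2 \hat{z}_j \in C^{0,\gamma}_4(M)$ (as $|\A_M|^2 = O(r^{-4})$ and $\hat{z}_j$ is bounded), one has $\|g\|_{C^{0,\gamma}_4(M)} \leq C \|f\|_{C^{0,\gamma}_4(M)}$, giving the desired estimate on $h$. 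Uniqueness in $\mathcal{K}$ is then immediate: any nonzero element of $\mathcal{K}$ annihilated by $J_M$ would be a bounded Jacobi field orthogonal, in the weighted inner product, to the orthonormal basis $\hat{z}_0,\ldots,\hat{z}_3$ of its kernel, hence zero.
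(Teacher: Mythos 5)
Your reduction of the problem to the orthogonality of $g=f-c^j|\A_M|^2\hat z_j$ against the kernel is correct, and the regularity bootstrap at the end (interior elliptic estimates plus Kelvin transform on each end) matches what the paper does. But the central step --- invertibility of the Jacobi form on the subspace $\mathcal{K}$ --- has a genuine gap, and it is exactly the point you flag as ``delicate'' without resolving. The contradiction scheme you describe (a sequence $\phi_n\in\mathcal{K}$ with $\|\phi_n\|_H=1$ and $B_J[\phi_n,\phi_n]\to 0$, concluding that the weak limit is a bounded Jacobi field) is the standard scheme for proving \emph{coercivity} of $B_J$ on $\mathcal{K}$, and that coercivity is false for the surfaces under consideration. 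The negative spectral subspace of the weighted eigenvalue problem $\Delta_M\phi+|\A_M|^2\phi=-\lambda|\A_M|^2\phi$ is weighted-$L^2$ orthogonal to the kernel, hence contained in $\mathcal{K}$; since the catenoid has Morse index $1$ and the Costa--Hoffman--Meeks surfaces have index at least $5$, the form $B_J$ restricted to $\mathcal{K}$ is genuinely indefinite. In particular there exist unit vectors $\phi\in\mathcal{K}$ with $B_J[\phi,\phi]=0$ that are not Jacobi fields, so the constant sequence $\phi_n=\phi$ satisfies your hypotheses while violating your conclusion. A correct a priori estimate argument would have to start from $\|J_M[\phi_n]\|\to 0$ in a dual norm rather than from the quadratic form degenerating, and even then one must separately handle the finite-dimensional negative subspace.

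The paper sidesteps the indefiniteness entirely: using the already-established invertibility of $\Delta_M$ (Lemma \ref{invertibility laplace}), it rewrites \eqref{jacobi with corrections} as $h+T(h)=\Delta_M^{-1}(g)$ with $T(h)=\Delta_M^{-1}(|\A_M|^2h)$, shows $T$ is a compact self-map of the space $H^\sharp$ (compactness coming from local elliptic estimates, Ascoli--Arzel\`a, and the decay $|\A_M|^2=O(r^{-4})$ on the ends), and invokes the Fredholm alternative. That route needs only injectivity of $I+T$ on $H^\sharp$, which follows directly from the non-degeneracy assumption \eqref{nondeg}, and requires no sign information about the Jacobi form whatsoever. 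If you want to salvage a direct variational treatment, you would need to replace coercivity by an inf-sup (saddle point) condition on $\mathcal{K}$, or explicitly split off the finite-dimensional negative subspace; as written, your key step does not go through.
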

\begin{proof}
	The strategy of the proof relies on the following formulation of problem \eqref{jacobi with corrections} 
	\begin{equation*}
		h+\Delta_M^{-1}(|\A_M|^2h)=\Delta_M^{-1}(f-c^j|\A_M|^2\hat{z}_j),
	\end{equation*}
	obtain by applying the inverse Laplacian, and on Fredholm alternative. According to Lemma \ref{invertibility laplace}, in order to do so we need both the quantities $|\A_M|^2h$ and $f$ to have mean zero, bearing in mind that  the mean of the correction automatically vanishes
	\begin{equation*}
		\int_M|\A_M|^2\hat{z}_j=-\int_M\Delta_M\hat{z}_j=0.
	\end{equation*}
	Even if those conditions look restrictive compared to the general formulation \eqref{jacobi with corrections} we can actually recover a solution to the original problem. Indeed, if we solve the modified problem
	\begin{equation*}
		\Delta_{M}h+|A_{M}|^{2}h-|A_{M}|^{2}c_1=f-c^j|\A_M|^2\hat{z}_j-|A_{M}|^{2}c_2
	\end{equation*}
	where the constants
	\begin{align*}
		c_{1}&=\frac{1}{\int_M|\A_{M}|^{2}}\int_M|\A_{M}|^{2}h_{1},\\
		c_2&=\frac{1}{\int_M|\A_{M}|^{2}}\int_M f
	\end{align*}
	guarantee the mean zero conditions, then $\bar h=h-c_1+c_2$ solves the original problem. Hence, without loss of generality we can look for a solution of \eqref{jacobi with corrections} in the space 
	\begin{equation*}
		H^\sharp:=H\cap L^\infty(M)\cap \braces{h\in L^\infty\ :\ \int_M|\A_M|^2h\hat{z}_j=0}
	\end{equation*}
	where $H$ is as in \eqref{definition of H} and for a right-hand side $f\in C^{0,\gamma}_4(M)$ such that
	\begin{equation*}
		\int_M f=0.
	\end{equation*}
	Let us define $\|h\|_\sharp=\|h\|_H+\|h\|_\infty$. In this setting, we can invert the Laplace-Beltrami operator and obtain 
	\begin{equation}\label{fredholm formulation}
		h+T(h)=\Delta_M^{-1}(f-c^j|\A_M|^2\hat{z}_j),
	\end{equation}
	where we define $T(h)=\Delta_M^{-1}(|\A_M|^2h)$. We claim that a solution exists by Fredholm alternative, given that the right-hand side belongs to $H^\sharp$ if the coefficients $c^j$ are as in \eqref{cjs}. To apply the Theorem, we need to verify the validity of the following:
	\begin{itemize}
		\item[(1)] $T$ is a self map in $H^\sharp$, namely $T(H^\sharp)\subset H^\sharp$;
		\item[(2)] If $h\in H^\sharp$ satisfies $h+T(h)=0$, then $h=0$;
		\item[(3)] $T:H^\sharp\to H^\sharp$ is compact.
	\end{itemize}
	We begin with point $(1).$ Remark that, since $T(h)$ solves 
	\begin{equation*}
		\Delta_MT(h)=|\A_M|^2h\quad\text{on }M
	\end{equation*}
	by Lemma \ref{invertibility laplace}. 
	\begin{equation*}
		\int_M |\A_M|^2T(h)=0
	\end{equation*}
	and
	\begin{align*}
		\|T(h)\|_\sharp&=\|\Delta_M^{-1}(|\A_M|^2h)\|_\sharp\\
		&\leq C\||\A_M|h\|_{L^2(M)}\\
		&\leq C\|h\|_\sharp.
	\end{align*}
	Also, 
	\begin{equation*}
		\int_M T(h)|\A_M|^2\hat{z}_j=-\int_M T(h)\Delta_M\hat{z}_j=-\int_M|\A_M|^2h\hat{z}_j,
	\end{equation*}
	where we used the self-adjointness of $\Delta_M$. These conditions together guarantee that $T$ is a self map. To prove $(2)$, we simply apply the Laplace--Beltrami operator on the whole equation and find that 
	\begin{equation*}
		\Delta_Mh+|\A_M|^2h=0\quad\text{on }M,
	\end{equation*}
	that is, $h$ is a bounded Jacobi field. By hypothesis of non-degeneracy, it must hold $h=\sum_{k=0}^3\alpha^k\hat{z}_k$ and since $h\in H^\sharp$ we have
	\begin{equation*}
		0=\int_M |\A_M|^2h\hat{z}_j=\alpha^k\delta_{kj}=\alpha^j,\quad j=0,\dots,3,
	\end{equation*}
	hence $h=0$. Finally, to prove $(3)$ we consider $\{h_n\}$ be a sequence of functions such that 
	\begin{equation*}
		\sup_n\|h_n\|_\sharp\leq 1,
	\end{equation*}
	and we show that $\{g_n\}=\braces{T(h_n)}$ has a convergent subsequence in $H^\sharp.$ Local elliptic estimates imply uniform bounds for $g_n$ in the norm of $C^{0,\gamma}$ and in turn, by Arzel\`a--Ascoli theorem, the existence of a subsequence converging uniformly over compact subsets to a limit $g$. We claim that $g_n\to g$ also in $H^\sharp$. By completeness and pointwise convergence, it will suffice to show that for every $\epsilon>0$
	\begin{equation}\label{cauchy condition}
		\|g_n-g_m\|_\sharp<\epsilon
	\end{equation}
	up to taking $n,m$ sufficiently large. Let us consider a sufficiently large ball $B_R=M\cap \{r<R\}$. We have 
	\begin{equation}
		\|g_n-g_m\|_{H^\sharp(B_R)}\leq C(R)\|g_n-g_m\|_{L^\infty(B_R)}\to 0
	\end{equation}
	as $m,n\to\infty$ by local uniform convergence. On the other hand 
	\begin{align*}
		\|g_n-g_m\|_{H^\sharp(B_R^c)}&=\|\Delta_M^{-1}(|\A_M|^2(h_n-h_m))\|_{H^\sharp(B_R^c)}\\
		&\leq C\||\A_M|(h_n-h_m)\|_{L^2(B_R^c)}\\
		&\leq CR^{-1}\|h_n-h_m\|_\infty\\
		&\leq CR^{-1}
	\end{align*}
	which is small up to enlarging $R$. This proves $(3)$ and hence the existence of a solution to \eqref{fredholm formulation} by Fredholm alternative. The estimates follow from the formulation \eqref{fredholm formulation} and \eqref{laplastimate}.
\end{proof}

\section{Proofs of Lemmas 2---7}\label{Proof_left}

This Section contains all the proofs of Lemmas that have been postponed so far. 

\subsection{Lemma \ref{behaviour h0}} \emph{For any real numbers $\lambda_1,\dots,\lambda_m$ satisfying 
\begin{equation*}
	\lambda_1\leq \lambda_2\leq\cdots\leq\lambda_m,\qquad\sum_{j=1}^m\lambda_j=0.
\end{equation*}
there exists a smooth function $h_0^1$, defined on $M$, such that
	\begin{equation*}
		\Delta_Mh_0^1+|\A_M|^2h_0^1=0\quad\text{on }M
	\end{equation*}
	and such that on each end $M_j$
	\begin{equation*}
		h_0^1(y)=(-1)^j\lambda_j\log r+\eta\quad \text{on }M_j,
	\end{equation*}
	where $\eta$ satisfies 
	\begin{equation*}
		\|\eta\|_{L^{\infty}(M)}+\|r^2D\eta\|_{L^{\infty}(M)}<+\infty.
	\end{equation*}}

\noindent{\bf Proof.} We start by noting that, defining $\psi$ as any continuous function satisfying 
\begin{equation*}
	\psi=(-1)^j\lambda_j\log r\quad \text{on }M_j
\end{equation*}
for every $j=1,\dots,m$, then the Lemma follows by proving the existence of a solution $\eta$ to 
\begin{equation*}
	J(\eta)=-J(\psi)
\end{equation*}
where $J=\Delta_M+|\A_M|^2$ is the Jacobi operator of the immersion $M\subset\R^3$. By Lemma \ref{inversion jacobi}, we can find a unique solution with the required properties to the corrected equation
\begin{equation*}
	J(\eta)=-J(\psi)-c^j|\A_M|^2\hat{z}_j
\end{equation*}
where
\begin{equation}\label{orthogonality J(psi)}
	c^j=-\frac{1}{\int_M|\A_M|^2\hat{z}^2_j}\int_M J(\psi)\hat{z}_j.
\end{equation}
We claim that
\begin{equation}\label{orthJ(psi)}
	\int_M J(\psi)z_i=0\quad i=0,\dots,3
\end{equation}
and hence that all the constants in \eqref{orthogonality J(psi)} vanish, concluding the proof of the result. Consider a cylinder $C_R=\{x\in\R^3\ :\ r(x)<R\}$. Then, using coordinates \eqref{expansion F_k} we find 
\begin{align}
	\int_{M\cap C_R}J(\psi)z_idV&=\sum_{j=1}^m\int_{M_j\cap \partial C_R}(z_i\partial_r\psi-\psi\partial_r z_i)d\sigma\label{Int on cyl}\\
	&=\sum_{j=1}^m(-1)^j\squared{\frac{\lambda_j}{R}\int_{|\xi|=R}z_id\sigma(\xi)-\lambda_j\log R\int_{|\xi|=R}\partial_rz_id\sigma(\xi)}.\nonumber
\end{align}
for $i=0,\dots,3$. Now, on each end $M_j$ we have
\begin{align*}
	z_3&=\nu_1\cdot e_3=\frac{(-1)^j}{\sqrt{1+|\nabla F_k|^2}}=(-1)^j+O(r^{-2}),\quad\partial_r z_3=O(r^{-3})
\end{align*}
and hence we get 
\begin{equation*}
	\int_{M\cap C_R}J(\psi)z_3dV=2\pi\sum_{j=1}^m\lambda_j+O(R^{-1}).
\end{equation*}
Passing to the limit as $R\to\infty$ and using the balancing condition on the $\lambda_j$'s we find \eqref{orthJ(psi)} for $i=3$. Similarly,
 \begin{align*}
	z_i&=(-1)^ka_k\frac{x_i}{r^2}+O(r^{-3}),\quad\partial_r z_i=(-1)^{k+1}a_k\frac{x_i}{r^2}+O(r^{-3}),\quad\text{on }M_j
\end{align*}
for $i=1,2$. This implies again, using \eqref{a_ks}, \eqref{Int on cyl} and taking the limit as $R\to+\infty$,
\begin{equation*}
	\int_MJ(\psi)z_idV=0,\quad i=1,2.
\end{equation*}
Finally, if $i=0$ on $M_j$
\begin{equation*}
	(-1)^jz_0=-\xi_2\partial_2F_j+\xi_1\partial_1F_j=\frac{1}{r^2}(b_{j1}\xi_2-b_{j2}\xi_1)+O(r^{-2})
\end{equation*}
and $\partial_r z_0=O(r^{-2})$. Thus, \eqref{orthJ(psi)} holds also for $i=0$ and the claim is proved. This concludes the proof.
\qed

\subsection{Lemma \ref{Lipsh}} \emph{
The map $G$ defined in \eqref{G} satisfies 
	\begin{equation*}
		\|G(0)\|_{C^{0,\gamma}_{4}(M)}\leq C\eps
	\end{equation*} 
	and the Lipschitz condition
	\begin{equation*}
		\|G(h_1)-G(h_2)\|_{C^{0,\gamma}_{4}(M)}\leq C\eps\|h_1-h_2\|_*.
	\end{equation*}
}

\noindent{\bf Proof.}
Consider at first $G^1$ - we claim that 
\begin{gather}
	\|G^1(0)\|_{C^{0,\gamma}_{4}(M)}\leq C\eps\label{G1 point}\\
	\|G^1(h_1)-G^1(h_2)\|_{C^{0,\gamma}_{4}(M)}\leq C\eps\|h_1-h_2\|_*\label{G_1 lip}.
\end{gather}
To see this, let us take for instance $G_1^1$. It holds that
\begin{align*}
	G_1^1(0)&=-\eps\partial_ih_0^\beta\partial_jh_0^\delta\int_{\R^2}\zeta_4a^{ij}_{1,\delta}(t^\delta+h_0^\delta)\nabla_{\beta\gamma,{U_0}}{U_0}\cdot\sdf{V}_1\\
	&+\eps^{-2}\int_{\R^2}\zeta_4R_1(y,x,0,0)\cdot\sdf{V}_1
\end{align*}
from which follows \eqref{G1 point}. Lipschitz dependence \eqref{G_1 lip} can be checked directly from the expression 
\begin{align*}
	G_1^1(h_1)	&=-\varepsilon^{2}\Delta_{M}h^{\delta}_{1}|\A_M|^2\int_{\R^2}\zeta_4\nabla_{\delta,{U_0}}\tilde{U}_{1}\cdot\sdf{V}_1\\
	&+\eps^2\Delta_{M}h^{\delta}_{1}a_{0}^{ij}\partial_{i}h_{0}^{\beta}\partial_{j}h_{0}^{\gamma}\int_{\R^2}\zeta_4\nabla_{\delta,{U_0}}\tilde{U}_{\beta\gamma}\cdot\sdf{V}_1\\
	&-\eps\partial_{i}h^{\beta}\partial_{j}h^{\gamma}\int_{\R^2}\zeta_4(t^\delta+h^\delta)a_{1,\delta}^{ij}\nabla_{\beta\gamma,{U_0}}{U_0}\cdot\sdf{V}_1\\
	&+\eps^{-2}\int_{\R^2}\zeta_4R_1(y,t,h,\nabla_{M}h)\cdot\sdf{V}_1
\end{align*}
for instance, 
\begin{align*}
	\norm{\varepsilon^{2}\Delta_{M}(h^{\delta}_{1}-h_2^\delta)|\A_M|^2\int_{\R^2}\zeta_4\nabla_{\delta,{U_0}}\tilde{U}_{1}\cdot\sdf{V}_1}_{C^{0,\gamma}_{4}(M)}&\leq C\eps^2\|\A_M\|_\infty^2\|D^2_Mh_1-D^2_Mh_2\|_{C^{0,\gamma}_{4}}\\
	&\leq C\eps^2\|h_1-h_2\|_*
\end{align*}
and the other terms are similarly checked. Now let's consider $G^2$---for ease of notation, define 
\begin{equation*}
	\sdf{N}(h_1)=\eps^{-2}\zeta_4\pi_{Z_{W,g}^\perp}N(\vphi(h_1))+\tilde{\sdf{B}}(h_1,\vphi(h_1))
\end{equation*}
so that
\begin{equation*}
	G^2_\alpha(h_1)=\int_{\R^2}\sdf{N}(h_1)\cdot\sdf{V}_\alpha.
\end{equation*}
and where we made explicit the two dependencies on $h_1$ of this term, one direct and one through $\vphi$. We claim that both these dependencies are Lipschitz with small constant. In particular, we claim that
\begin{gather}
	\|N(\vphi_{1})-N(\vphi_{2})\|_{C_{4}^{0,\gamma}(\mathbb{R}^{4})}\leq C\eps^3\|\vphi_{1}-\vphi_{2}\|_{C_{4}^{2,\gamma}(\mathbb{R}^{4})},\label{N lip claim 1}\\
	\|\tilde{\sdf{B}}(h,\vphi_{1})-\tilde{\sdf{B}}(h,\vphi_{2})\|_{C_{4}^{0,\gamma}(\mathbb{R}^{4})}\leq C\eps^3\|\vphi_{1}-\vphi_{2}\|_{C_{4}^{2,\gamma}(\mathbb{R}^{4})},\label{N lip claim 1.5}\\
	\|\tilde{\sdf{B}}(h_{1},\vphi)-\tilde{\sdf{B}}(h_{2},\vphi)\|_{C_{4}^{0,\gamma}(\mathbb{R}^{4})}\leq C\eps^4\|h_1-h_2\|_*\|\vphi\|_{C^{2,\gamma}_{4}(\R^4)}\label{N lip claim 2}
\end{gather}
also, we claim that 
\begin{equation}\label{N lip claim 3}
	\|\vphi_{1}-\vphi_{2}\|_{C_{4}^{2,\gamma}(\mathbb{R}^{4})}\leq C\eps^2\|h_1-h_2\|_*.
\end{equation}
Let us accept estimates \eqref{N lip claim 1}-\eqref{N lip claim 3} for a moment. We readily check that, denoting $\vphi_l=\vphi(h_l)$
\begin{align*}
	\norm{G_\alpha^2(h_1)-G_\alpha^2(h_2)}_{C^{0,\gamma}_{4}(M)}&= \norm{\int_{\R^2}[\sdf{N}(h_1)-\sdf{N}(h_2)]\cdot\sdf{V}_\alpha}_{C^{0,\gamma}_{4}(M)}\\
	&\leq C\varepsilon^{-2}\|N(\vphi_{1})-N(\vphi_{2})\|_{C_{4}^{0,\gamma}(\mathbb{R}^{4})}+\\
 &+C\varepsilon^{-2}\|B(h_1,\vphi_{1})-B(h_2\vphi_{2})\|_{C_{4}^{0,\gamma}(\mathbb{R}^{4})}\\
	&\leq C\eps\|\vphi_{1}-\vphi_{2}\|_{C_{4}^{2,\gamma}(\mathbb{R}^{4})}+ C\varepsilon^{-2}\|B(h_{1},\vphi_{1})-B(h_{1},\vphi_{2})\|_{C_{4}^{0,\gamma}(\mathbb{R}^{4})}\\
	&+ C\varepsilon^{-2}\|B(h_{1},\vphi_{2})-B(h_{2},\vphi_{2})\|_{C_{4}^{0,\gamma}(\mathbb{R}^{4})}\\
	&\leq C\eps\|\vphi_{1}-\vphi_{2}\|_{C_{4}^{2,\gamma}(\mathbb{R}^{4})}+C\eps^2\|h_1-h_2\|_*\|\vphi\|_{C^{2,\gamma}_{4}(\R^4)}\\
	&\leq C(\eps^3+\eps^5)\|h_1-h_2\|_*.
\end{align*}
We now prove all the Lipschitz dependencies, starting with \eqref{N lip claim 1}. This follows automatically from the definition of $N$ as a second order expansion of $S$ in the direction of $\vphi$, from where we find
\begin{align*}
	\|N(\vphi_1)-N(\vphi_2)\|_{C^{0,\gamma}_{4}(\R^4)}&\leq C \round{\|\vphi_1\|_{\infty}+\|\vphi_2\|_{\infty}}\|\vphi_1-\vphi_2\|_{C^{2,\gamma}_{4}(\R^4)}\\
	&\leq C\eps^3\|\vphi_1-\vphi_2\|_{C^{2,\gamma}_{4}(\R^4)}
\end{align*} 
where we used that $\|\vphi_\ell\|_{C^{2,\gamma}_4(\R^4)}=O(\eps^3)$, $\ell=1,2$. Next we prove \eqref{N lip claim 3} and to do so we recall that $\vphi$ solves \eqref{Perturb eq not inverted - corrected}. The term $\mathcal{R}(h)=-S(W)$ has a Lipschitz dependence 
\begin{equation*}
	\|\mathcal{R}(h_1)-\mathcal{R}(h_2)\|_{C^{0,\gamma}_{4}(\R^4)}\leq C\eps^2\|h_1-h_2\|_*
\end{equation*}
as shown by calculations analogous to those for the Lipschitz behaviour of $G^1$. The estimates from Proposition \ref{invertibility L_W} yield to 
\begin{align*}
	\|\vphi_1-\vphi_2\|_{C^{2,\gamma}_{4}(\R^4)}&\leq C\|\mathcal{R}(h_1)-\mathcal{R}(h_2)\|_{C^{0,\gamma}_{4}(\R^4)}+\|N(\vphi_1)-N(\vphi_2)\|_{C^{0,\gamma}_{4}(\R^4)}\\
	&\leq C\eps^2\|h_1-h_2\|_*+C\eps^3\|\vphi_1-\vphi_2\|_{C^{2,\gamma}_{4}(\R^4)}
\end{align*}
and hence, up to choosing $\eps$ sufficiently small, we have \eqref{N lip claim 3}. Lastly, we observe that \eqref{N lip claim 1.5} Is a direct consequence of the smallness in $\eps$ of all the coefficients of the second order operator $B$, while \eqref{N lip claim 2} is a consequence of the mild Lipschitz dependence in $h_1$ of such coefficients, for instance 
\begin{equation*}
	a^{ij}_{1,\delta}(y, \eps(t^\delta + h_0^\delta + h_1^\delta)).
\end{equation*}
Finally, we prove that the term
\begin{equation*}
	G^3_\alpha(h_1)=-{\eps^{-2}}\int_{\R^2}\zeta_4L_{U_0}[\vphi]\cdot\sdf{V}_\alpha
\end{equation*}
is Lipschitz with a constant that is exponentially small in $\eps$. This is straightforward using the self-adjointness of $L_{U_0}$ and the fact that $\sdf{V}_\alpha\in\ker L_{U_0}$. Indeed, integrating by parts yields to 
\begin{align*}
	\int_{\mathbb{R}^{2}}\zeta_{4}L_{U_0}\left[\vphi\right]\cdot\mathsf{V}_{\alpha}dt&=\int_{\mathbb{R}^{2}}\zeta_{4}\left(-\Delta_{t,{U_0}}\vphi-\Delta_{M_{\varepsilon}}\vphi+\vphi+T_{{U_0}}\vphi\right)\cdot\mathsf{V}_{\alpha}dt\\&=-\int_{\mathbb{R}^{2}}\Delta_{t}\zeta_{4}\mathsf{V}_{\alpha}\cdot\vphi dt-\int_{\mathbb{R}^{2}}\left(\nabla_{t}\zeta_{4}\cdot\nabla_{t,{U_0}}\mathsf{V}_{\alpha}\right)\vphi dt
\end{align*}
where we also used the orthogonality between $\vphi$ and $\sdf{V}_\alpha$. Using the exponential decay of $\sdf{V}_\alpha$ and the usual argument with the support of $\zeta_4$ we obtain 
\begin{align*}
	\|G_\alpha^3(h_1)-G_\alpha^3(h_2)\|_{C^{0,\gamma}_{4}(\R^4)}&\leq C\eps^{-2}e^{-\frac\delta\eps}\|\vphi_1-\vphi_2\|_\infty\\
	&\leq Ce^{-\frac\delta\eps}\|h_1-h_2\|_*.
\end{align*}
Finally, we can put all the estimates just found together to obtain that $G$ has an $O(\eps)$-Lipschitz constant. 
\qed

\subsection{Lemma \ref{orthogonality Z_i}} \emph{
It holds
	\begin{equation*}
		\int_{\R^4}S(U)\cdot\Theta_U[\gamma]=0\quad\text{and}\quad\int_{\R^4}S(U)\cdot Z_i=0
	\end{equation*}
	for $i=0,\dots,4$.
}

\medskip

\noindent{\bf Proof.} We start by proving that 
\begin{equation*}
	\int_{\R^4}S(U)\cdot\Theta_U[\gamma]=0,\quad \gamma=-\Theta_W^*[\Phi] .
\end{equation*}
First, remark that by \eqref{estimate vphi}
\begin{equation*}
	\|\gamma\|_{C^{0,\alpha}_4(\R^4)}<\infty
\end{equation*}
and thus $\gamma$ decays at infinity. 

\medskip

Let us consider a cylinder $C_R=\{x\in \R^4\ :\ x_1^2+x_2^2<R^2\}$. It holds
\begin{align*}
	\int_{C_R}S(U)\cdot\Theta_U[\gamma]&=\int_{C_R}\bracket{-\eps^2\Delta^{A}u-\tfrac{1}{2}\left(1-|u|^{2}\right)u,iu\gamma}+\eps^2\int_{C_{R}}\left(\eps^2d^{*}dA-\bracket{\nabla^Au,iu}\right)d\gamma\\
	&=\int_{C_R}\bracket{-\eps^2\Delta^Au,iu}\gamma+\eps^2\int_{C_R}\eps^2d^*dA\cdot d\gamma-\eps^2\int_{C_R}\bracket{\nabla^Au,iu}d\gamma\\
	&=\eps^2\int_{\partial C_R}d\gamma(\eps^2d^*dA-\bracket{\nabla^Au,iu}).
\end{align*}
Using the decay of $\gamma$, we see that the last quantity vanishes as $R\to+\infty$, proving the claim.
Now, we prove the second part of the lemma, namely that 
\begin{equation*}
	\int_{\R^4}S(U)\cdot Z_i=0\quad i=0,\dots,4.
\end{equation*}
Recall that, if $U=(u, A)^T$,
\begin{equation*}
	Z_i=\nabla_{x_i,U}U=\nabla_{x_i}{U}-\Theta_{U}[A_i]
\end{equation*}
for $i=1,\dots,4$. We're going to prove that the integration on $C_R$ of $S(U)$ against both of these quantities produces boundary terms that, together, vanish as $R\to+\infty$. For instance consider $i=3$. It holds
\begin{align*}
	\int_{C_{R}}S(U)\cdot\nabla_{x_{3}}U&=\int_{C_{R}}\bracket{-\eps^2\Delta^{A}u-\tfrac{1}{2}\left(1-|u|^{2}\right)u,\partial_{x_{3}}u}+\eps^2\int_{C_{R}}\left(\eps^2d^{*}dA-\text{Im}\left(\bar{u}\nabla^{A}u\right)\right)\cdot\partial_{x_{3}}A\\
	&=\int_{\partial C_{R}}\eps^2\bracket{\nabla u\cdot\hat{r},\partial_{3}u}-\int_{\partial C_{R}}\eps^2\bracket{iAu\cdot\hat{r},\partial_{3}u}+\eps^2\int_{\partial C_{R}}\eps^2\partial_{3}A_{j}\left[dA(\cdot,e_{j})\cdot\hat{r}\right]\\
	&\ -\frac{\eps^2}{2}\int_{C_{R}}\partial_{x_{3}}\left(|\nabla^{A}u|^{2}+\eps^2|dA|^{2}+\frac{1}{4\eps^2}\left(1-|u|^{2}\right)^{2}\right)\\
	&=\int_{\partial C_{R}}\eps^2\bracket{\nabla^Au\cdot\hat{r},\partial_{3}u}+\eps^2\int_{\partial C_{R}}\eps^2\partial_{3}A_{j}\left[dA(\cdot,e_{j})\cdot\hat{r}\right].
\end{align*}
Where we used the invariance of the cylinder $C_R$ in the $z_3$-direction. Next, we compute
\begin{equation*}
\int_{C_{R}}S(U)\cdot\Theta_{U}\left[A_{3}\right]=\int_{C_{R}}\begin{pmatrix}-\Delta^{A}u-\frac{1}{2}\left(1-|u|^{2}\right)u\\
d^{*}dA-\text{Im}\left(\bar{u}\nabla^{A}u\right)
\end{pmatrix}\cdot\begin{pmatrix}iuA_{3}\\
dA_{3}
\end{pmatrix},
\end{equation*}
and as above an integration by parts yields to 
\begin{align*}
	\int_{C_{R}}S(U)\cdot\Theta_{U}\left[A_{3}\right]&=-\int_{\partial C_R}\eps^2\bracket{\nabla^Au\cdot\hat{r},iuA_3}-\eps^2\int_{\partial C_{R}}\eps^2\partial_{j}A_{3}\left[dA(\cdot,e_{j})\cdot\hat{r}\right]\\
	& =-\int_{\partial C_R}\eps^2\bracket{\nabla^Au\cdot\hat{r},iuA_3}-\eps^2\int_{\partial C_{R}}\eps^2\partial_{j}A_{3}\left[dA(\cdot,e_{j})\cdot\hat{r}\right]
\end{align*}
where we used the gauge invariance of the energy $E$. In all we get 
\begin{align*}
	\int_{C_{R}}S(U)\cdot \nabla_{x_{3},U}U&=\int_{\partial C_{R}}\eps^2\bracket{\nabla^Au\cdot\hat{r},\partial_{3}^Au}+\eps^2\int_{\partial C_{R}}\eps^2\left(\partial_{3}A_{j}-\partial_{j}A_{3}\right)\left[dA(\cdot,e_{j})\cdot\hat{r}\right]\\
 &=\eps^2\int_{\partial C_{R}}\left(\nabla_{U}U\cdot\hat{r}\right)\nabla_{x_{3},U}U.
\end{align*}
We claim that 
\begin{equation}\label{limit boundary terms to 0}
	\lim_{R\to+\infty}\int_{\partial C_{R}}\left(\nabla_{U}U\cdot\hat{r}\right)\cdot \nabla_{x_{3},U}U=0.
\end{equation}
We begin by expanding
\begin{equation*}
	\left(\nabla_{U}U\cdot\hat{r}\right)\nabla_{x_{3},U}U=|\mathsf{V}_{1}|^{2}\left(\nu_{1}\cdot e_{3}\right)\left(\nu_{1}-\left(-1\right)^{k}\frac{\eps\lambda_{k}}{r}\hat{r}\right)\cdot\hat{r}+O\left(\varepsilon^2 r^{-2}\right)
\end{equation*}
and by observing that, since the $k$-th end of the manifold reads
\begin{equation*}
	x_{3}=F_{k}\left(x_{1},x_{2}\right)=\left(a_{k}\log r+b_{k}+O(r^{-1})\right)
\end{equation*}
the normal vector satisfies
\begin{equation*}
	(-1)^{k}\nu_{1}=\frac{1}{\sqrt{1+|\nabla F_{k}|^{2}}}\left(\nabla F_{k},-1\right)=a_{k}r^{-1}\hat{r}-e_{3}+O(r^{-2}).
\end{equation*}
On the portion of $C_R$ near this end we have 
\begin{equation*}
	\left(\nu_{1}\cdot e_{3}\right)\left(\nu_{1}-\left(-1\right)^{k}\frac{\eps\lambda_{k}}{r}\hat{r}\right)\cdot\hat{r}=-\frac{a_{k}+\eps\lambda_{k}}{r}+O(r^{-2}).
\end{equation*}
Now, remark that
\begin{equation*}
	t_{1}=\left(x_{3}-F_{k}\left(x_{1},x_{2}\right)-\eps\lambda_{k}\log r+O\left(1\right)\right)\left(1+O\left(R^{-2}\right)\right)
\end{equation*}
and hence we get
\begin{equation*}
	\int_{-\infty}^{+\infty}dx_{4}\int_{F_{k}(x')+\eps\lambda_{k}\log r-\rho}^{F_{k}(x')+\eps\lambda_{k}\log r+\rho}|\mathsf{V}_{1}|^{2}dx_{3}=\int_{\mathbb{R}^{2}}|\mathsf{V}_{1}|^{2}dt+O\left(R^{-2}\right).
\end{equation*}
This follows from the fact that on $\partial C_R$ the distance between ends is greater than $\rho\coloneqq2\gamma\log R$. 
Now, we use the fact that for large $r$ the covariant gradient of $U$ is exponentially small with the distance from each end of the manifold. Precisely, it holds
\begin{equation}\label{estimates gradient ends manifold}
	\abs{\nabla_UU}\leq C\sum_{j=1}^me^{-\sigma|x_3-(F_j(x')+\eps\lambda_j\log r)|}.
\end{equation}
This fact can be proved precisely as in Lemma 9.4 in \cite{del2013entire}, namely through characterization \eqref{expression S(U) with corrections} of $U$ and barriers, using the fact that the main order of the linearised behaves roughly as $-\Delta+1$ on each component. 
Using \eqref{estimates gradient ends manifold}, we get that far from the ends of the manifold
\begin{equation*}
	\int_{\cap_{k}\{|x_{3}-F_{k}|>\rho\}}\left(\nabla_{U}U\cdot\hat{r}\right)\cdot\nabla_{x_{3},U}U=O\left(R^{-2}\right).
\end{equation*}
Finally, we infer 
\begin{equation*}
	\int_{\mathbb{R}^{2}}\left(\nabla_{U}U\cdot\hat{r}\right)\cdot\nabla_{x_{3},U}U=-\frac{1}{R}\sum_{k=1}^{m}\left(a_{k}+\eps\lambda_{k}\right)\int_{\mathbb{R}^{2}}|\mathsf{V}_{1}|^{2}dt+O\left(R^{-2}\right)
\end{equation*}
and thus
\begin{equation*}
	\int_{\partial C_{R}}\left(\nabla_{U}U\cdot\hat{r}\right)\cdot\nabla_{x_{3},U}U=-2\pi\sum_{k=1}^{m}\left(a_{k}+\eps\lambda_{k}\right)\int_{\mathbb{R}^{2}}|\mathsf{V}_{1}|^{2}dt+O\left(R^{-1}\right)
\end{equation*}
but since $\sum_{k=1}^{m}a_{k}=\sum_{k=1}^{m}\lambda_{k}=0$ we get \eqref{limit boundary terms to 0} and the proof is complete for $i=3$. For $i=4$ the proof is almost identical, with the only difference being that the terms $\lambda_k$ don't appear. For $i=2$ is similar, but the integration of $S(U)\cdot\nabla_{x_2}U$ on $C_R$ produces an extra boundary term 
\begin{align*}
	\int_{C_{R}}S(U)\cdot\nabla_{x_{2}}U
	&=\int_{\partial C_{R}}\eps^2\left\langle\nabla^{A}u\cdot\hat{r},\partial_{2}u\right\rangle+\eps^2\int_{\partial C_{R}}\eps^2\partial_{2}A_{j}\left[dA(\cdot,e_{j})\cdot\hat{r}\right]\\
	&\ -\frac{\eps^2}{2}\int_{\partial C_{R}}\left(|\nabla^{A}u|^{2}+\eps^2|dA|^{2}+\frac{1}{4\eps^2}\left(1-|u|^{2}\right)^{2}\right)n_2
\end{align*}
due to the fact that the domain is bounded in the $x_2$ direction. We denoted $n_2=x_2/r$. By the expansion of the gradient $\nabla_UU$, we get
\begin{equation*}
	|\nabla^{A}u|^{2}+\eps^2|dA|^{2}=|\sdf{V}_1|^2+|\sdf{V}_2|^2+O(\eps r^{-2}e^{-\sigma|t|})
\end{equation*}
and hence, since $\int_{r=R}n_2=0$, 
\begin{equation*}
	\lim_{R\to+\infty}\int_{\partial C_{R}}\left(|\nabla^{A}u|^{2}+\eps^2|dA|^{2}\right)n_{2}=\lim_{R\to+\infty}2\int_{\mathbb{R}^{2}}|\mathsf{V}_{1}|^{2}\int_{\{r=R\}}n_{2}+O(R^{-1})=0
\end{equation*}
and a similar argument yields to 
\begin{equation*}
	\lim_{R\to+\infty}\int_{\partial C_{R}}\left(1-|u|^2\right)^2n_{2}=0.
\end{equation*}
The other boundary terms are treated exactly as before and of course the same proof holds also for $i=1$. We are only left with the case $i=0$, for which is convenient to switch to cylindrical coordinates. We have 
\begin{equation*}
	\int_{C_{R}}S(U)\cdot\left(x_{2}\nabla_{x_{1},U}U-x_{1}\nabla_{x_{2},U}U\right)=\int_{C_{R}}S(U)\nabla_{\theta,U}U
\end{equation*}
and just as in the previous cases an integration by part produces a vanishing term (due to the rotational symmetry of the energy) plus a boundary term, namely
\begin{align*}
	\int_{C_{R}}S(U)\cdot\nabla_{\theta,U}U&=R\int_{\mathbb{R}^{2}}\int_{0}^{2\pi}\nabla_{r,U}U\cdot\nabla_{\theta,U}U\vert_{r=R}\\
	&=\int_{C_R}\nabla_{r,U}U\cdot\nabla_{\theta,U}U
\end{align*}
but from the expansion of the gradient we obtain
\begin{align*}
	\nabla_{r,U}U\cdot\nabla_{\theta,U}U=|\sdf{V}_1|^2O(R^{-2})+O(R^{-2}e^{-\sigma|t|})
\end{align*}
thus
\begin{equation*}
	\int_{C_R}\nabla_{r,U}U\cdot\nabla_{\theta,U}U=O(R^{-1})
\end{equation*}
and letting $R\to+\infty$ we obtain the claim also for $i=0$. The proof is concluded.
\qed\subsection{Lemma \ref{coercivity simple}} \emph{
	There exists a $\lambda>0$ such that 
	\begin{equation*}
	B\left[\phi,\phi\right]\coloneqq \int_{\mathbb{R}^{2}}\nabla\psi\cdot\nabla\phi+f^{2}\psi\phi\geq\lambda\left\Vert \phi\right\Vert _{H^{1}}^{2}
\end{equation*}
for every $\phi\in H^1(\R^2)$.
}

\medskip

\noindent{\bf Proof.}	By contradiction suppose that we can find a collection of functions $\left\{ \phi_{j}\right\}$  such that $\left\Vert \phi_{j}\right\Vert _{H^{1}}=1$ for every $j$ and
	\begin{equation}\label{contradiction}
		B\left[\phi_{j},\phi_{j}\right]\to0
	\end{equation} 
	as $j\to\infty$. Extracting a subsequence, we have that 
	\begin{equation*}
		\phi_{j}\rightharpoonup\phi^{*}\quad\text{in }H^{1}
	\end{equation*}
	and by weak lower semicontinuity of $B$, using \eqref{contradiction}, we infer 
	\begin{equation*}
		0\leq B\left[\phi^{*},\phi^{*}\right]\leq\liminf B\left[\phi_{j},\phi_{j}\right]=0
	\end{equation*}
	that is 
	\begin{equation*}
		B\left[\phi^{*},\phi^{*}\right]=\int_{\mathbb{R}^{2}}\left(\left|\nabla\phi^{*}\right|^{2}+f^{2}\left|\phi^{*}\right|^{2}\right)=0
	\end{equation*}
	which in turn implies $\phi^{*}=0$. Now, observe that 
	\begin{equation}\label{key}
		\left\Vert \phi_{j}\right\Vert _{H^{1}}^{2}=B\left[\phi_{j},\phi_{j}\right]+\int_{\mathbb{R}^{2}}\left(1-f^{2}\right)\left|\phi_{j}\right|^{2}.
	\end{equation}
	Now, given any $\epsilon>0$ we can find $r(\epsilon)>0$ such that 
	\begin{equation*}
		1-f^2<\epsilon\quad\text{in }B_{r(\epsilon)}^c,
	\end{equation*}
	therefore, since $\left\Vert \phi_{j}\right\Vert _{H^{1}}^{2}=1$, 
	\begin{equation*}
		\int_{B_{r\left(\epsilon\right)}^{c}}\left(1-f^{2}\right)\left|\phi_{j}\right|^{2}\leq\epsilon\left\Vert \phi_{j}\right\Vert _{L^{2}}^{2}<\epsilon.
	\end{equation*}
	On the other hand, by Rellich lemma $\phi_j\to0$ in $L^2(B_{r(\epsilon)})$, which means that by choosing $j$ big enough we have 
	\begin{equation*}
		\int_{B_{r\left(\epsilon\right)}}\left(1-f^{2}\right)\left|\phi_{j}\right|^{2}<\epsilon.
	\end{equation*}
	Finally, from equation \eqref{key}, we find 
	\begin{align*}
		1=&\left\Vert \phi_{j}\right\Vert _{H^{1}}^{2}\\
		=&B\left[\phi_{j},\phi_{j}\right]+\int_{B_{r\left(\epsilon\right)}}\left(1-f^{2}\right)\left|\phi_{j}\right|^{2}+\int_{B_{r\left(\epsilon\right)}^{c}}\left(1-f^{2}\right)\left|\phi_{j}\right|^{2}\\
		<&B\left[\phi_{j},\phi_{j}\right]+2\epsilon
	\end{align*}
	from which we draw a contradiction using \eqref{contradiction}.
 \qed
\subsection{Lemma \ref{lemma1}} \emph{
There exists a constant $C>0$ independent on $\eps$ such that for every pair $H\in C^{0,\gamma}(M\times\R^2)$ there exists a solution $\Phi=\Phi(H)$ to the equation 
	\begin{equation*}
		-\Delta_{t,{U_0}}\Phi-\eps^2\Delta_{M}\Phi+\Phi=H\quad \text{in }M\times\R^2
	\end{equation*}
defining a linear operator in $H$, such that
\begin{equation*}
\norm{\Phi}_{C^{2,\gamma}(M\times\R^2)}\leq C\norm{H}_{C^{0,\gamma}(M\times\R^2)}.
\end{equation*}
}

\medskip

\noindent{\bf Proof.}
Observe that we can embed $M\times\R^2\subset\R^5$. Let $B_R$ be the ball of radius $R$ centred in the origin of $\R^5$ and consider, for $k=1,2,\dots$, the following problem
\begin{equation}\label{cut-system}
 	\begin{cases}
 		-\Delta_{t,{U_0}}\Phi-\eps^2\Delta_{M}\Phi+\Phi=H&\text{in }(M\times\R^2)\cap B_k\\
 		\Phi=0&\text{on }(M\times\R^2)\cap \partial B_k.
 	\end{cases}
 \end{equation}
 For every $k$ problem \eqref{cut-system} admits a weak solution $\Phi_k$. This follows from Riez's theorem, along with the fact that the corresponding linear operator defines a positive, symmetric bilinear form on $H_{U_0,0}^1((M\times\R^2)\cap B_k)$, namely the closure on $C^{\infty}_c((M\times\R^2)\cap B_k)$ pairs under the $H_{U_0}^1$ norm. Moreover, the linear operator in \eqref{cut-system} satisfies on each component maximum principle, thus using $\|H\|_\infty$ as a barrier we obtain the uniform bound
 \begin{equation*}
 	\|\Phi_k\|_\infty\leq \|H\|_\infty\quad \forall k\in\mathbb{N}.
 \end{equation*}
Thus, using elliptic estimates we obtain the presence of a subsequence of $\Phi_k$ that converges uniformly over compact sets as $k \to \infty$ to a limit $\Phi$ that solves  
\begin{equation*}
	-\Delta_{t,{U_0}}\Phi-\eps^2\Delta_{M}\Phi+\Phi=H\quad\text{in }M\times\R^2,
\end{equation*}
and the estimate
\begin{equation*}
	\norm{\Phi}_{C^{2,\gamma}(M\times\R^2)}\leq C\norm{H}_{C^{0,\gamma}(M\times\R^2)}
\end{equation*}
for some constant $C>0$ independent on $\eps$.
\qed

\subsection{Lemma \ref{outer invertibility}}
\emph{
For every $\eps>0$ sufficiently small and any $\Gamma$ with $\|\Gamma\|_{C^{0,\gamma}_\mu(\R^4)}<\infty$ there exists a $\Psi=\Psi(\Gamma)$, defining a linear map of $\Gamma$, satisfying 
	\begin{equation*}
 -\eps^2\Delta_W\Psi+\Psi+Q_\eps\Psi=\Gamma\quad\text{on }\R^4.
	\end{equation*}
where $Q_\eps=(1-\zeta_2)T_W$.
Moreover
	\begin{equation*}
		\norm{\Psi}_{C^{2,\gamma}_\mu(\R^4)}\leq C\|\Gamma\|_{C^{0,\gamma}_\mu(\R^4)}
	\end{equation*}
 for a constant $C>0$ independent of $\eps$.
}

\medskip

\noindent{\bf Proof.}
This proof follows the same lines as the one of Lemma \ref{lemma1} above. We use again a barrier using the fact that the operator satisfies maximum principle. The weighted estimates follow from the same argument used in the proof of Proposition \ref{Inversion for LUbullet - theorem}.
\qed

\medskip

{\bf Acknowledgements:}
	The authors have been supported  by Royal Society Research Professorship RP-R1-180114, United Kingdom.

\bibliography{MGLbibl} 
\bibliographystyle{siam}

\end{document}